\newtheorem{theorem}{Theorem}
\newenvironment{proof}{\textbf{Proof.}\,}{\par\hfill $\square$\par}
\newtheorem{remark}{Remark}
\newtheorem{lemma}{Lemma}
    \def\corref#1{}
\begin{document}
    \captionsetup[figure]{labelformat={default}, labelsep=period, name={Fig.}}

    \begin{frontmatter}
        \title{Efficient finite element schemes for a phase field model of two-phase incompressible flows with different densities}
        \author[a]{Jiancheng Wang}
        \ead{202311110403@std.uestc.edu.cn} 
        \author[a]{Maojun Li}
        \ead{limj@uestc.edu.cn}
        \author[b]{Cheng Wang\corref{cor1}}
        \ead{cwang1@umassd.edu}
        \address[a]{School of Mathematical Sciences, University of Electronic Science and Technology of China, Sichuan, 611731, P.R. China }
        \address[b]{Mathematics Department, University of Massachusetts, North Dartmouth, MA 02747, USA}
        \cortext[cor1]{Corresponding author.}

        \begin{abstract}
            In this paper, we present two multiple scalar auxiliary variable (MSAV)-based, finite element numerical schemes for the Abels-Garcke-Gr{\"u}n (AGG) model, which is a thermodynamically consistent phase field model of two-phase incompressible flows with different densities. Both schemes are decoupled, linear, second-order in time, and the numerical implementation turns out to be straightforward. The first scheme solves the Navier-Stokes equations in a saddle point formulation, while the second one employs the artificial compressibility method, leading to a fully decoupled structure with a time-independent pressure update equation. In terms of computational cost, only a sequence of independent elliptic or saddle point systems needs to be solved at each time step. At a theoretical level, the unique solvability and unconditional energy stability (with respect to a modified energy functional) of the proposed schemes are established. In addition, comprehensive numerical simulations are performed to verify the effectiveness and robustness of the proposed schemes.
        \end{abstract}

        \begin{keyword}
            two-phase incompressible flows, multiple scalar auxiliary variables (MSAV), energy stability, phase field, variable density
        \end{keyword}
    \end{frontmatter}

    \section{Introduction}
        Two-phase incompressible flows are fundamental components of numerous real-life physical phenomena and play a pivotal role in multiphysics simulations, with diverse industrial applications, such as bubble column modeling in chemical and biological engineering processes \cite{2011_gross}, and bubble dynamics modeling in metallurgy processes \cite{2014_zhang}. Meanwhile, accurately capturing the dynamic interface is of paramount importance in numerical simulations of two-phase incompressible flows, as it enables a comprehensive understanding of the intricate dynamics exhibited by such a system. Among various interface modeling methods (such as the volume of fluid method \cite{1981_hirt}, level set method \cite{1994_sussman}, arbitrary Lagrange-Eulerian method \cite{2022_duan}), we are particularly interested in the phase field approach, due to its ability to circumvent difficulties associated with simulating discontinuities in physical quantities across interfaces and its adherence to an energy law.

        The classical phase field model of two-phase incompressible flows (the so-called Model H) originates from the seminal work of Hohenberg and Halperin \cite{1977_hohenberg}, and was subsequently refined in a thermodynamically consistent manner by Gurtin et al. \cite{1996_gurtin}. There have been enormous outstanding achievements for Model H, see \cite{2023_cai, chen24b, chen22b, 2017_diegel, 2006_feng, 2023_feng, 2015_han, 2017_han} and the references therein. On the other hand, such a model is based on a critical assumption that the density ratio is equal to one. Although the Boussinesq approximation \cite{2003_liu} is applicable for different densities, it does not hold in the presence of a large density ratio. Furthermore, incorporating density variation directly into the momentum equation is inappropriate, as it is crucial to ensure the model is thermodynamically consistent in the case of a large density ratio. Therefore, it is not suitable to employ Model H for simulating many engineering problems, such as the behavior of rising gas bubbles in liquid metal flows under external magnetic fields \cite{2014_zhang}. Various thermodynamically consistent phase field models (Cahn-Hilliard-Navier-Stokes system) have been proposed in the literature; see \cite{2012_abels, 2014_aki, 2022_haddad, 2021_guo, 2015_liu, 1998_lowengrub, 2018_roudbari, 2023_ten}, and a comprehensive exploration can be found in \cite{2023_ten}. Among these available phase field models, our specific focus lies on the so-called Abels-Garcke-Gr{\"u}n (AGG) model, proposed by Abels et al. \cite{2012_abels}. This model employs volume-averaged velocity to ensure solenoidal velocity, while refining the conventional continuity equation and momentum equation by incorporating a diffusive mass flux determined by the density ratio and chemical potential. In turn, gravitational energy cannot be taken into consideration as the conventional continuity equation is no longer valid, and such an approach leads to an energy dissipation law in the absence of external gravity forces. On the other hand, designing efficient numerical schemes for the AGG model is much easier, in comparison with other models mentioned above, since it is a generalization of Model H in unmatched density cases. The theoretical results (such as the global well-posedness and regularity analysis) of this model have been exhaustively investigated; see \cite{2023_abels} and the references therein. Also, the AGG model has been generalized to many complex multi-physics problems \cite{2012_campillo, 2018_dong, 2016_grun, 2021_rohde}, etc. 

        Now, our literature review will focus exclusively on the numerical schemes for the AGG model. Gr{\"u}n and Klingbeil \cite{2014_grun} proposed a second-order in time and unconditionally energy stable scheme, with its convergence established in \cite{2013_grun}. However, their scheme is nonlinear and coupled, so a theoretical justification of unique solvability is not available in their work. Additionally, the convergence rate analysis is absent. Shen and Yang \cite{2015_shen} employed the pressure stabilization method \cite{2009_guermond, 2011_guermond} to construct a fully decoupled, linear, first-order in time, and unconditionally energy stable semi-implicit scheme; also see \cite{2016_chen}. Gong et al. \cite{2018_gong} introduced a novel energy quadratization technique to derive a linear and second-order in time scheme, and unconditional energy stability was proved, subject to periodic boundary conditions. Yang and Dong \cite{2019_yang} employed a scalar auxiliary variable (SAV) approach, which combines the kinetic energy and the potential free energy to reformulate the model, and proposed a second-order in time and unconditionally energy stable scheme. In fact, the momentum equation is transformed into a curl form in their work, and the vorticity variable is introduced to decouple the velocity and pressure gradient. Such an effort leads to a fully decoupled structure, and only a sequence of time-independent coefficient linear systems and one nonlinear algebraic equation need to be solved at each time step. However, a very small time step is needed in their scheme to obtain a reliable simulation result. Khanwale et al. \cite{2020_khanwale} proposed an unconditionally energy stable Crank-Nicolson type scheme, while a residual-based variational multiscale method was used in the Navier-Stokes part for pressure stabilization, and a massively parallel adaptive octree-based meshing framework was also developed. Moreover, the pressure projection idea was employed and a projection-based semi-implicit scheme was derived in \cite{2023_khanwale}. Meanwhile, a linear, second-order in time and unconditionally energy stable scheme was proposed by Fu and Han \cite{2021_fu}, and a classical residual-based stabilization was applied in the advection-dominated regime. On the other hand, this is a coupled numerical scheme, which leads to a higher computational cost. And they failed to utilize the ``zero-energy-contribution" (ZEC) approach \cite{2021_yang_1} in constructing a decoupled scheme, as an order one constancy error was observed in their numerical experiments. Nevertheless, the ZEC approach is still very useful in obtaining a decoupled structure. For example, following the work~\cite{2015_shen}, Chen and Yang \cite{2021_chen} proposed a fully decoupled, linear, and unconditionally energy stable scheme, while the pressure stabilization method, Strang operator splitting method, and ZEC approach were utilized to decouple the system; also see the related work~\cite{2022_chen}. The primary shortcoming of this numerical effort is associated with the fact that their schemes were only limited to first-order accuracy in time.

        Inspired by the previous studies, our objective in this paper is to develop decoupled, linear, second-order in time, and unconditionally energy stable, fully discrete finite element numerical schemes for the AGG model. In particular, such an effort is based on decoupling methods for incompressible flows, such as projection and quasi-compressibility approaches. This numerical effort is undoubtedly very challenging, since the densities and dynamic viscosities in the momentum equation are determined by the order parameter, and more coupled terms are involved than in Model H. Moreover, a theoretical justification of energy stability also poses a formidable challenge, especially for second-order in time, fully decoupled schemes. In fact, the numerical scheme presented in \cite{2019_yang} is the only one that fulfills all the aforementioned properties. However, this algorithm encounters significant challenges in numerical simulation and lacks scalability for more intricate two-phase incompressible flows. In order to accomplish these objectives, the following strategies are implemented:
        \begin{itemize}
            \item The conventional numerical techniques, such as semi-implicit methods, are not ideal to achieve the objectives outlined above. To facilitate the numerical design and theoretically obtain a discrete energy law, we reformulate the dimensionless AGG model by some suitable identical transformations. Afterwards, the MSAV approach \cite{2018_cheng, 2022_li} is employed to rewrite the system into an equivalent form in the continuous sense. Of course, the nonlinear potential in the Cahn-Hilliard equation can be linearized by this approach. In comparison with the standard SAV approach \cite{2018_shen_2}, the MSAV formulation can effectively capture the disparate evolution process in the total energy \cite{2018_cheng}.
            \item To ensure unconditional energy stability in developing a linear scheme, we incorporate the ZEC feature into the scalar auxiliary variables of the MSAV approach, and treat all the coupled terms explicitly. In addition, extrapolation formulas are utilized to linearize all variables related to the order parameter in the momentum equation, such as the density and dynamic viscosity.
            \item The backward difference formula (BDF) is used as the temporal discretization. The first scheme is derived by a direct application of Galerkin finite element spatial approximation. To further develop a fully decoupled finite element scheme with a time-independent pressure update equation, we employ the artificial compressibility method \cite{2021_Ern_2} and follow the ideas in \cite{2017_decaria, 2022_gao, 2011_guermond} to deal with the Navier-Stokes equations. The second proposed scheme is based on a combination of these numerical ideas.
        \end{itemize}

        The rest of this paper is organized as follows. In Section \ref{section_2}, we introduce the dimensionless governing equations and employ the MSAV approach for an equivalent reformulation. In Section \ref{section_3}, we propose the associated numerical schemes, and prove the unique solvability and unconditional energy stability. The numerical implementation is outlined as well. In Section \ref{section_4}, various numerical simulation results are presented to validate the robustness of the proposed schemes, and the impact of different mobility forms is investigated. Finally, some concluding remarks are made in Section \ref{section_5}, and the related future works are mentioned.

    \section{Governing equations and MSAV reformulation}
    \label{section_2}
        \subsection{Governing equations}
            Let $\Omega\subset\mathbb{R}^d (d=2,3)$ be a bounded and connected domain with Lipschitz continuous boundary $\partial\Omega$. The following dimensionless thermodynamically consistent phase field model of two-phase incompressible flows is considered, with different densities (for the dimensional AGG model, see \cite{2012_abels} for details):
            \begin{subequations}
                \label{OCHNS}
                \begin{align}
                    \phi_t+\nabla\cdot\left(\bm{u}\phi\right)=&\frac{1}{\mathrm{Pe}}\nabla\cdot\left(m\nabla\mu\right),\\
                    -\mathrm{Cn}^2\Delta\phi+F'(\phi)=&\mu,\\
                    \rho_t+\nabla\cdot\left(\rho\bm{u}+\bm{J}\right)=&0,\label{OCHNS_rho}\\
                    \rho\bm{u}_t+\left(\rho\bm{u}+\bm{J}\right)\cdot\nabla\bm{u}=&\frac{1}{\mathrm{Re}}\nabla\cdot\left(2\eta\mathbb{D}(\bm{u})\right)-\nabla{p}-\frac{\mathrm{Cn}}{\mathrm{We}}\nabla\cdot\left(\nabla\phi\nabla\phi\right)+\frac{1}{\mathrm{Fr}}\bm{f},\label{OCHNS_u}\\
                    \nabla\cdot\bm{u}=&0.
                \end{align}
            \end{subequations}
            In the above equations, $\phi$ is the order parameter that labels the two immiscible fluids such that
            \begin{equation}
                \phi(\bm{x},t)=
                \begin{cases}
                    1,  & \text{fluid 1},\\
                    -1, & \text{fluid 2}.
                \end{cases}
            \end{equation}
            Moreover, $\mu$ is the chemical potential, $m=m(\phi)\ge{0}$ is the mobility, $F(\phi)=\frac{1}{4}(\phi^2-1)^2$ is the Ginzubrg-Landau double-well potential, $\rho$ and $\eta$ are the density and dynamic viscosity defined by
            \begin{equation}
                \rho:=\frac{\rho_1-\rho_2}{2\rho_r}\phi+\frac{\rho_1+\rho_2}{2\rho_r}\quad\text{and}\quad\eta:=\frac{\eta_1-\eta_2}{2\eta_r}\phi+\frac{\eta_1+\eta_2}{2\eta_r},
            \end{equation}
            where $\rho_1$ (respective to $\rho_2$) and $\eta_1$ (respective to $\eta_2$) are the density and dynamic viscosity of fluid 1 (respective to fluid 2), and $\rho_r,\eta_r$ stand for the reference density and reference dynamic viscosity. The variable $\bm{u}$ is the velocity, $p$ is the pressure, $\mathbb{D}(\bm{u})=\frac{1}{2}(\nabla\bm{u}+\nabla\bm{u}^T)$ is the deformation tensor, $\bm{f}=\bm{f}(\rho)$ is the external gravity force.
            In equation \eqref{OCHNS_rho}, $\bm{J}$ is the diffusion flux related to the density difference,  given by
            \begin{equation}
                \bm{J}=-\frac{\rho_1-\rho_2}{2\rho_r\mathrm{Pe}}m\nabla\mu.
            \end{equation}

            For the details of non-dimensional form, let $L_r,u_r,\epsilon,\lambda,M,g$ denote the reference length, reference velocity, interface thickness, surface tension, reference mobility and gravity acceleration, respectively. In turn, the dimensionless numbers are given by the following parameters:
            \begin{itemize}
                \item Cahn number $\mathrm{Cn}=\displaystyle\frac{\epsilon}{L_r}$
                \item (effective) P\'{e}clet number $\mathrm{Pe}=\displaystyle\frac{2\sqrt{2}\epsilon{L}_ru_r}{3\lambda{M}}$
                \item Reynolds number $\mathrm{Re}=\displaystyle\frac{L_r\rho_ru_r}{\eta_r}$
                \item Weber number $\mathrm{We}=\displaystyle\frac{2\sqrt{2}L_r\rho_ru_r^2}{3\lambda}$
                \item Froude number $\mathrm{Fr}=\displaystyle\frac{u_r^2}{gL_r}$
            \end{itemize}
            \begin{remark}
                Here we give the details to determine these dimensionless numbers. In fact, the Froude number $\mathrm{Fr}$ is set to one, and this fixes the reference velocity to be $u_r=\sqrt{gL_r}$. Subsequently, $\mathrm{Re}$ and $\mathrm{We}$ can be determined by setting appropriate $L_r,\rho_r,\eta_r$, with given $\sigma$. In general, $\mathrm{Pe}$ is assumed to be a function of $\mathrm{Cn}$. In the case of constant mobility, the scaling law $1/\mathrm{Pe}=3\mathrm{Cn}$ is preferred in the literature \cite{2020_khanwale, 2023_khanwale, 2013_magaletti}; while in the degenerate mobility case (as a typical choice of $(\phi^2-1)^2$), the scaling law $1/\mathrm{Pe}=$ constant is preferred in the literature \cite{2012_aland, 2021_fu, 2019_wang}. Following the above scaling laws, we are able to complete the parameter setting by choosing the interface thickness $\epsilon.$
            \end{remark}

            To facilitate the numerical design, we first make use of the identity
            \begin{equation}
                \nabla\cdot\left(\nabla\phi\nabla\phi\right)=\frac{1}{2}\nabla\left|\nabla\phi\right|^2+\Delta\phi\nabla\phi,
            \end{equation}
            and transform the right-hand side of \eqref{OCHNS_u} into
            \begin{equation}
                \mathrm{RHS}=\frac{1}{\mathrm{Re}}\nabla\cdot\left(2\eta\mathbb{D}(\bm{u})\right)-\nabla{P}-\frac{1}{\mathrm{We}\mathrm{Cn}}\phi\nabla\mu+\frac{1}{\mathrm{Fr}}\bm{f} ,
            \end{equation}
            in which the so-called effective pressure has been introduced:
            \begin{equation}
                P:=p+\frac{\mathrm{Cn}}{2\mathrm{We}}|\nabla\phi|^2+\frac{1}{\mathrm{We}\mathrm{Cn}}\left(F(\phi)-\mu\phi\right).
            \end{equation}
            In turn, we multiply \eqref{OCHNS_rho} by $\frac{1}{2}\bm{u}$ and add it to \eqref{OCHNS_u}. By introducing a new variable $\sigma=\sqrt{\rho},$ the left-hand side of \eqref{OCHNS_u} turns out to be
            \begin{equation}
                \mathrm{LHS}=\sigma(\sigma\bm{u})_t+(\rho\bm{u}+\bm{J})\cdot\nabla\bm{u}+\frac{1}{2}\nabla\cdot\left(\rho\bm{u}+\bm{J}\right)\bm{u}.
            \end{equation}
            Hence, system \eqref{OCHNS} could be rewritten as the following form:
            \begin{subequations}
                \label{CHNS}
                \begin{align}
                    \phi_t+\nabla\cdot\left(\bm{u}\phi\right)=&\frac{1}{\mathrm{Pe}}\nabla\cdot\left(m\nabla\mu\right),\\
                    -\mathrm{Cn}^2\Delta\phi+F'(\phi)=&\mu,\\
                    \sigma(\sigma\bm{u})_t+\left(\rho\bm{u}+\bm{J}\right)\cdot\nabla\bm{u}+\frac{1}{2}\nabla\cdot\left(\rho\bm{u}+\bm{J}\right)\bm{u}=&\frac{1}{\mathrm{Re}}\nabla\cdot\left(2\eta\mathbb{D}(\bm{u})\right)-\nabla{P}-\frac{1}{\mathrm{We}\mathrm{Cn}}\phi\nabla\mu+\frac{1}{\mathrm{Fr}}\bm{f},\\
                    \nabla\cdot\bm{u}=&0 .
                \end{align}
            \end{subequations}
            To form a closed PDE system, we take the following boundary conditions:
            \begin{equation}
                \label{bc}
                \begin{aligned}
                    \bm{n}\cdot\nabla\phi|_{\partial\Omega}=&0,\\
                    \bm{n}\cdot\nabla\mu|_{\partial\Omega}=&0,\\
                    \bm{u}|_{\partial\Omega}=&0,
                \end{aligned}
            \end{equation}
            and the initial conditions:
            \begin{equation}
                \label{ic}
                \begin{aligned}
                    \phi(0)=&\phi^0,\\
                    \bm{u}(0)=&u^0.
                \end{aligned}
            \end{equation}
            Notice that other appropriate physical boundary conditions could also be adopted, such as periodic and free-slip ones.

            By taking $\bm{f}=0$, system \eqref{CHNS} with boundary condition \eqref{bc} and initial condition \eqref{ic} satisfies the following energy dissipation law:
            \begin{equation}
                \frac{\mathrm{d}}{\mathrm{d}t}\mathscr{E}(t)=-\mathscr{P}(t)\le{0},
            \end{equation}
            or in an equivalent integral form:
            \begin{equation}
                \sup_{0\le{t}\le{T}}\mathscr{E}(t)+\int_0^T\mathscr{P}(t)\mathrm{d}t\le{C},
            \end{equation}
            where $T\in(0,\infty)$ is the final time and $C$ is a generic constant dependent on the initial data and dimensionless numbers. The dimensionless total energy and the energy dissipation density are given by
            \begin{equation}
                \label{true_energy}
                \begin{aligned}
                    \mathscr{E}&=\int_\Omega\left[\frac{1}{2}|\sigma\bm{u}|^2+\frac{1}{\mathrm{We}\mathrm{Cn}}\left(\frac{\mathrm{Cn}^2}{2}|\nabla\phi|^2+F(\phi)\right)\right]\mathrm{d}\bm{x},\\
                    \mathscr{P}&=\int_\Omega\left(\frac{1}{\mathrm{Pe}\mathrm{We}\mathrm{Cn}}m|\nabla\mu|^2+\frac{2}{\mathrm{Re}}\eta|\mathbb{D}(\bm{u})|^2\right)\mathrm{d}\bm{x}.
                \end{aligned}
            \end{equation}

        \subsection{The MSAV reformulation}
            For the convenience of numerical design, we introduce the following scalar auxiliary variables \cite{2022_li}:
            \begin{subequations}
                \begin{align}
                    R(t)=&U(t)=\sqrt{\int_\Omega{G}(\phi)\mathrm{d}\bm{x}+S}, \quad
                    G(\phi)=F(\phi)-\frac{s}{2}\phi^2 , \label{SAV}\\
                    \xi_1=&R/U,\\
                    Q(t)=&e^{-t/T},\\
                    \xi_2=&e^{t/T}Q,
                \end{align}
            \end{subequations}
            where $s,S$ are positive constants. In turn, system \eqref{CHNS} can be reformulated as
            \begin{subequations}
                \label{MCHNS}
                \begin{align}
                    \phi_t+\xi_1\nabla\cdot\left(\bm{u}\phi\right)=&\frac{1}{\mathrm{Pe}}\nabla\cdot\left(m\nabla\mu\right),\label{MCHNS_phi}\\
                    -\mathrm{Cn}^2\Delta\phi+s\phi+\xi_1G'(\phi)=&\mu,\label{MCHNS_mu}\\
                    R_t=&\frac{1}{2U}\int_\Omega{G}'(\phi)\phi_t\mathrm{d}\bm{x},\label{MCHNS_R}\\
                    \sigma(\sigma\bm{u})_t+\xi_2\left[(\rho\bm{u}+\bm{J})\cdot\nabla\bm{u}+\frac{1}{2}\nabla\cdot\left(\rho\bm{u}+\bm{J}\right)\bm{u}\right]=&\frac{1}{\mathrm{Re}}\nabla\cdot\left(2\eta\mathbb{D}(\bm{u})\right)-\nabla{P}-\frac{\xi_1}{\mathrm{We}\mathrm{Cn}}\phi\nabla\mu+\frac{1}{\mathrm{Fr}}\bm{f},\label{MCHNS_u}\\
                    \nabla\cdot\bm{u}=&0,\label{MCHNS_rho}\\
                    Q_t=&-\frac{1}{T}Q.\label{MCHNS_Q}
                \end{align}
            \end{subequations}
            \begin{remark}
                In fact, we have introduced the term $\frac{s}{2}\phi^2$ to simplify the analysis \cite{2022_li, 2018_shen_1}, which will be explored in our future work.
            \end{remark}

            It is obvious that, the reformulated system \eqref{MCHNS}, together with the boundary and initial conditions \eqref{bc}, \eqref{ic}, supplemented with two extra initial conditions for $R(t)$ and $Q(t),$ i.e.,
            \begin{equation}
                \begin{aligned}
                    R(0)=&\sqrt{\int_\Omega{G}(\phi^0)\mathrm{d}\bm{x}+S},\\
                    Q(0)=&1,
                \end{aligned}
            \end{equation}
            is equivalent to the original PDE system.

            Multiplying \eqref{MCHNS_R} by $2R$ and \eqref{MCHNS_Q} by $Q,$ we can easily obtain an equivalent energy dissipation law:
            \begin{equation}
                \frac{\mathrm{d}}{\mathrm{d}t}\mathscr{E}(t)=-\mathscr{P}(t)\le{0},
            \end{equation}
            or in an equivalent integral form:
            \begin{equation}
                \sup_{0\le{t}\le{T}}\mathscr{E}(t)+\int_0^T\mathscr{P}(t)\mathrm{d}t\le{C},
            \end{equation}
            where
            \begin{equation}
                \begin{aligned}
                    \mathscr{E}&=\frac{1}{2}\int_\Omega\left(|\sigma\bm{u}|^2+\frac{\mathrm{Cn}}{\mathrm{We}}|\nabla\phi|^2+\frac{s}{\mathrm{We}\mathrm{Cn}}|\phi|^2\right)\mathrm{d}\bm{x}+\frac{1}{\mathrm{We}\mathrm{Cn}}R^2+\frac{1}{2}Q^2,\\
                    \mathscr{P}&=\int_\Omega\left(\frac{1}{\mathrm{Pe}\mathrm{We}\mathrm{Cn}}m|\nabla\mu|^2+\frac{2}{\mathrm{Re}}\eta|\mathbb{D}(\bm{u})|^2\right)\mathrm{d}\bm{x}+\frac{Q^2}{T}.
                \end{aligned}
            \end{equation}

            In the subsequent section, we will present fully discrete finite element schemes for system \eqref{MCHNS} with provable unconditional energy stability (in terms of a modified energy) for both schemes. Some related works of SAV-type numerical methods that preserve the original energy have been reported in  \cite{2022_jiang, 2022_zhang}, while this effort has been beyond the scope of this paper, and we only focus on the modified energy stability in this work. For simplicity, the mobility $m$ is set to be a constant, while a degenerate mobility (such as $m=(\phi^2-1)^2$) is also applicable. In particular, we will investigate their impact on interface dynamics through several numerical simulations in Section \ref{section_5}, since a degenerate mobility could reduce the non-physical effect of bulk diffusion in two-phase flows, as mentioned in \cite{2021_fu}.

    \section{Fully discrete schemes and their properties}
    \label{section_3}
        Let $H^k(\Omega)$ be the Sobolev spaces equipped with inner product $(\cdot,\cdot)_k$ and norm $\|\cdot\|_k$; of course, it becomes the Lebesgue space $L^2(\Omega)$ with inner product $(\cdot,\cdot)$ and norm $\|\cdot\|$ if $k=0$. Then we introduce the following spaces:
        \begin{equation}
            \begin{aligned}
                V:=&H^1(\Omega),\\
                \bm{X}:=&H_0^1(\Omega)^d,\\
                N:=&L^2(\Omega)\cap{H}^1(\Omega).
            \end{aligned}
        \end{equation}
        In the spatial discretization, let $\mathcal{J}_h$ be a quasi-uniform triangulation of $\Omega$ with mesh size $h,$ and $V_h\subset{V},\bm{X}_h\subset\bm{X},N_h\subset{N},M_h=N_h\cap{L}_0^2(\Omega)$ be the finite element spaces. It is assumed that the pair $\bm{X}_h\times{N}_h$ satisfies the corresponding LBB condition \cite{2021_Ern_1}: there exists a positive constant $C$ which is independent of $h,$ such that
        \begin{equation}
            \inf_{q\in{N}_h}\sup_{\bm{v}\in{\bm{X}}_h}\frac{(\nabla\cdot\bm{v},q)}{\|\bm{v}\|_1\|q\|}\ge{C}.
        \end{equation}
        In terms of the temporal discretization, let $t_n,n=0,1,\cdots,N$ be a uniform partition of the time interval $[0,T]$ with time step size $\tau=\frac{T}{N},$ and the following BDF operator is defined:
        \begin{equation}
            \delta_\tau\chi^{n+1}=\frac{\gamma_0\chi^{n+1}-\hat{\chi}}{\tau},
        \end{equation}
        where $\chi$ is a generic variable and
        \begin{equation}
            \hat{\chi}=
            \begin{cases}
                \chi^n    ,                     & J=1,\\
                2\chi^n-\frac{1}{2}\chi^{n-1} , & J=2,
            \end{cases}
            ; \quad \gamma_0=
            \begin{cases}
                1   , & J=1,\\
                3/2 , & J=2.
            \end{cases}
        \end{equation}
        We also recall the extrapolation formulas
        \begin{equation}
            \widetilde{\chi}^{n+1}=
            \begin{cases}
                \chi^n     ,         & J=1,\\
                2\chi^n-\chi^{n-1} , & J=2,
            \end{cases}
        \end{equation}
        and use the following notations in the numerical design:
        \begin{equation}
            \begin{aligned}
                \mathscr{D}\chi^{n+1}&=\chi^{n+1}-\chi^n,\\
                \mathscr{D}^2\chi^{n+1}&=\chi^{n+1}-2\chi^n+\chi^{n-1},\\
                \mathscr{D}^3\chi^{n+1}&=\chi^{n+1}-3\chi^n+3\chi^{n-1}-\chi^{n-2}.
            \end{aligned}
        \end{equation}

        \subsection{The standard Galerkin scheme}
            The Galerkin finite element method can be directly applied to system \eqref{MCHNS} for spatial discretization, i.e., solving velocity and pressure in a saddle point formulation. Its combination with the BDF temporal discretization results in the following fully discrete finite element scheme: find $(\phi_h^{n+1},\mu_h^{n+1})\in{V}_h\times{V}_h,(\bm{u}_h^{n+1},P_h^{n+1})\in\bm{X}_h\times{M}_h,R^{n+1}\in\mathbb{R}^+,Q^{n+1}\in\mathbb{R}^+,$ such that for all $(\psi_h,\omega_h)\in{V}_h\times{V}_h,(\bm{v}_h,q_h)\in\bm{X}_h\times{M}_h,$
            \begin{subequations}
                \label{MCHNSt_1}
                \begin{align}
                    (\delta_\tau\phi_h^{n+1},\omega_h)=&\xi_1^{n+1}(\widetilde{\phi}_h^{n+1}\widetilde{\bm{u}}_h^{n+1},\nabla\omega_h)-\frac{1}{\mathrm{Pe}}(\nabla\mu_h^{n+1},\nabla\omega_h),\label{CHNS_phit_1}\\
                    (\mu_h^{n+1},\psi_h)=&\mathrm{Cn}^2(\nabla\phi_h^{n+1},\nabla\psi_h)+s(\phi_h^{n+1},\psi_h)+\xi_1^{n+1}(G'(\widetilde{\phi}_h^{n+1}),\psi_h),\label{CHNS_mut_1}\\
                    \delta_\tau{R}^{n+1}=&\frac{1}{2\widetilde{U}^{n+1}}\left((G'(\widetilde{\phi}_h^{n+1}),\delta_\tau\phi_h^{n+1})-(\widetilde{\phi}_h^{n+1}\widetilde{\bm{u}}_h^{n+1},\nabla\mu_h^{n+1})+(\widetilde{\phi}_h^{n+1}\nabla\widetilde{\mu}_h^{n+1},\bm{u}_h^{n+1})\right),\label{CHNS_Rt_1}\\
                    (\widetilde{\sigma}_h^{n+1}\delta_\tau(\widetilde{\sigma}\bm{u})_h^{n+1}&,\bm{v}_h)+\frac{2}{\mathrm{Re}}\left(\widetilde{\eta}_h^{n+1}\mathbb{D}(\bm{u}_h^{n+1}),\mathbb{D}(\bm{v}_h)\right)-(P_h^{n+1},\nabla\cdot\bm{v}_h)+\left(\nabla\cdot\bm{u}_h^{n+1},q_h\right)\notag\\
                    =&-\frac{\xi_2^{n+1}}{2}\left[\left((\widetilde{\rho}_h^{n+1}\widetilde{\bm{u}}_h^{n+1}+\widetilde{\bm{J}}_h^{n+1})\cdot\nabla\widetilde{\bm{u}}_h^{n+1},\bm{v}_h\right)-\left((\widetilde{\rho}_h^{n+1}\widetilde{\bm{u}}_h^{n+1}+\widetilde{\bm{J}}_h^{n+1})\cdot\nabla\bm{v}_h,\widetilde{\bm{u}}_h^{n+1}\right)\right]\notag\\
                    &-\frac{\xi_1^{n+1}}{\mathrm{We}\mathrm{Cn}}(\widetilde{\phi}_h^{n+1}\nabla\widetilde{\mu}_h^{n+1},\bm{v}_h)+\frac{1}{\mathrm{Fr}}(\widetilde{\bm{f}}_h^{n+1},\bm{v}_h),\label{CHNS_rhout_1}\\
                    \delta_\tau{Q}^{n+1}=&-\frac{Q^{n+1}}{T}+\frac{1}{2}e^{t^{n+1}/T}\left[\left((\widetilde{\rho}_h^{n+1}\widetilde{\bm{u}}_h^{n+1}+\widetilde{\bm{J}}_h^{n+1})\cdot\nabla\widetilde{\bm{u}}_h^{n+1},\bm{u}_h^{n+1}\right)\right.\notag\\
                    &\left.-\left((\widetilde{\rho}_h^{n+1}\widetilde{\bm{u}}_h^{n+1}+\widetilde{\bm{J}}_h^{n+1})\cdot\nabla\bm{u}_h^{n+1},\widetilde{\bm{u}}_h^{n+1}\right)\right].\label{CHNS_Qt_1}
                \end{align}
            \end{subequations}
            In the above equations, we denote $\widetilde{\chi}^{n+1}=\chi(\widetilde{\phi}_h^{n+1})$ for simplicity.
            \begin{remark}
                In fact, the ZEC approach has been utilized in the construction of \eqref{CHNS_Rt_1} and \eqref{CHNS_Qt_1}, to ensure the energy stability property.
            \end{remark}
            \begin{remark}
                Although $\phi$ is bounded in the continuous PDE analysis, the discrete solution $\phi_h$ may not obey this bound. The dynamics of $\phi$ would not be influenced by this deviation, while the strict positivity of the associated physical parameters may be destroyed, which is especially pivotal here since $\sigma=\sqrt{\rho}$ is introduced. To avoid such a numerical error caused by an excursion of $\phi_h$, the cut-off approach is applied:
                \begin{equation}
                    \phi_h^\mathscr{C}=
                    \begin{cases}
                        \phi_h,                & \text{if} \, \, \, |\phi_h|\le{1}, \\
                        \mathrm{sign}(\phi_h), & \text{otherwise},
                    \end{cases}
                \end{equation}
                and $\rho_h,\eta_h$ could be evaluated by
                \begin{equation}
                    \rho_h=\frac{\rho_1-\rho_2}{2\rho_r}\phi_h^\mathscr{C}+\frac{\rho_1+\rho_2}{2\rho_r}\quad\text{and}\quad\eta_h:=\frac{\eta_1-\eta_2}{2\eta_r}\phi_h^\mathscr{C}+\frac{\eta_1+\eta_2}{2\eta_r} , 
                \end{equation}
                with suitable time discretization for $\phi_h$. In practice, we first use extrapolations to get $\widetilde{\phi}_h^{n+1}$, then the cut-off variables, $\widetilde{\rho}_h^{n+1}$ and $\widetilde{\eta}_h^{n+1}$,  can be updated afterwards. Via this method, the bound of $\rho$ and $\eta$ will be inherited by discrete solutions. In addition, the theoretical proof of discrete energy law will not be affected, since it is employed only to make $\widetilde{\rho}_h^{n+1}$ and $\widetilde{\eta}_h^{n+1}$ bounded in the bulk by $\rho_k$ and $\eta_k$, $k=1, 2$. 
            \end{remark}

            Now we look at the numerical implementation process. With the notations
            \begin{equation}
                \label{update}
                \begin{cases}
                    \phi_h^{n+1}=\phi_0^{n+1}+\xi_1^{n+1}\phi_1^{n+1},\\
                    \mu_h^{n+1}=\mu_0^{n+1}+\xi_1^{n+1}\mu_1^{n+1},\\
                    \bm{u}_h^{n+1}=\bm{u}_0^{n+1}+\xi_1^{n+1}\bm{u}_1^{n+1}+\xi_2^{n+1}\bm{u}_2^{n+1},\\
                    P_h^{n+1}=P_0^{n+1}+\xi_1^{n+1}P_1^{n+1}+\xi_2^{n+1}P_2^{n+1},
                \end{cases}
            \end{equation}
            we are able to decompose \eqref{MCHNSt_1} into the following steps.

            \textit{Step} 1:
            Find $(\phi_0^{n+1},\mu_0^{n+1})\in{V}_h\times{V}_h,$ such that for all $(\psi_h,\omega_h)\in{V}_h\times{V}_h,$
            \begin{equation}
                \label{phase_0}
                \begin{aligned}
                    \gamma_0(\phi_0^{n+1},\omega_h)+\frac{\tau}{\mathrm{Pe}}(\nabla\mu_0^{n+1},\nabla\omega_h)=&(\hat{\phi},\omega_h),\\
                    (\mu_0^{n+1},\psi_h)-\mathrm{Cn}^2(\nabla\phi_0^{n+1},\nabla\psi_h)-s(\phi_0^{n+1},\psi_h)=&0.
                \end{aligned}
            \end{equation}
            In addition, we solve for $(\phi_1^{n+1},\mu_1^{n+1})\in{V}_h\times{V}_h,$ such that for all $(\psi_h,\omega_h)\in{V}_h\times{V}_h,$
            \begin{equation}
                \label{phase_1}
                \begin{aligned}
                    \gamma_0(\phi_1^{n+1},\omega_h)+\frac{\tau}{\mathrm{Pe}}(\nabla\mu_1^{n+1},\nabla\omega_h)=&\tau(\widetilde{\phi}_h^{n+1}\widetilde{\bm{u}}_h^{n+1},\nabla\omega_h),\\
                    (\mu_1^{n+1},\psi_h)-\mathrm{Cn}^2(\nabla\phi_1^{n+1},\nabla\psi_h)-s(\phi_1^{n+1},\psi_h)=&(G'(\widetilde{\phi}_h^{n+1}),\psi_h).
                \end{aligned}
            \end{equation}

            \textit{Step} 2:
            Obtain $(\bm{u}_0^{n+1},P_0^{n+1})\in\bm{X}_h\times{M}_h,$ such that for all $(\bm{v}_h,q_h)\in\bm{X}_h\times{M}_h,$
            \begin{equation}
                \label{flow_u0}
                \begin{aligned}
                    \gamma_0((\widetilde{\rho}_h\bm{u}_0)^{n+1},\bm{v}_h)+\frac{2\tau}{\mathrm{Re}}\left(\widetilde{\eta}_h^{n+1}\mathbb{D}(\bm{u}_0^{n+1}),\mathbb{D}(\bm{v}_h)\right)-&\tau(P_0^{n+1},\nabla\cdot\bm{v}_h)\\
                    +&\tau(\nabla\cdot\bm{u}_0^{n+1},q_h)=(\widetilde{\sigma}_h^{n+1}\widehat{\widetilde{\sigma}\bm{u}},\bm{v}_h)+\frac{\tau}{\mathrm{Fr}}(\widetilde{\bm{f}}_h^{n+1},\bm{v}_h).
                \end{aligned}
            \end{equation}
            Similarly, we get $(\bm{u}_1^{n+1},P_1^{n+1}) , \, (\bm{u}_2^{n+1},P_2^{n+1}) \in\bm{X}_h\times{M}_h,$ such that for all $(\bm{v}_h,q_h)\in\bm{X}_h\times{M}_h$,
            \begin{equation}
                \label{flow_u1}
                \begin{aligned}
                    \gamma_0((\widetilde{\rho}_h\bm{u}_1)^{n+1},\bm{v}_h)+\frac{2\tau}{\mathrm{Re}}\left(\widetilde{\eta}_h^{n+1}\mathbb{D}(\bm{u}_1^{n+1}),\mathbb{D}(\bm{v}_h)\right)-&\tau(P_1^{n+1},\nabla\cdot\bm{v}_h)\\
                    +&\tau(\nabla\cdot\bm{u}_1^{n+1},q_h)=-\frac{\tau}{\mathrm{We}\mathrm{Cn}}(\widetilde{\phi}_h^{n+1}\nabla\widetilde{\mu}_h^{n+1},\bm{v}_h) ,
                \end{aligned}
            \end{equation}
            \begin{equation}
                \label{flow_u2}
                \begin{aligned}
                    \gamma_0((\widetilde{\rho}_h\bm{u}_2)^{n+1},\bm{v}_h)+\frac{2\tau}{\mathrm{Re}}\left(\widetilde{\eta}_h^{n+1}\mathbb{D}(\bm{u}_2^{n+1}),\mathbb{D}(\bm{v}_h)\right)-&\tau(P_2^{n+1},\nabla\cdot\bm{v}_h)+\tau(\nabla\cdot\bm{u}_2^{n+1},q_h)\\
                    =&-\frac{\tau}{2}\left[\left((\widetilde{\rho}_h^{n+1}\widetilde{\bm{u}}_h^{n+1}+\widetilde{\bm{J}}_h^{n+1})\cdot\nabla\widetilde{\bm{u}}_h^{n+1},\bm{v}_h\right)\right.\\
                    &\left.-\left((\widetilde{\rho}_h^{n+1}\widetilde{\bm{u}}_h^{n+1}+\widetilde{\bm{J}}_h^{n+1})\cdot\nabla\bm{v}_h,\widetilde{\bm{u}}_h^{n+1}\right)\right].
                \end{aligned}
            \end{equation}

            \textit{Step} 3:
            Once $\phi_i^{n+1},\mu_i^{n+1},\bm{u}_i^{n+1}$ are obtained, we are able to determine $\xi_i^{n+1}$ by
            \begin{equation}
                \label{CHNS_xi}
                \begin{aligned}
                    \begin{pmatrix}
                        A_1 & A_2 \\
                        B_1 & B_2
                    \end{pmatrix}
                    \begin{pmatrix}
                        \xi_1^{n+1} \\
                        \xi_2^{n+1}
                    \end{pmatrix}
                    =
                    \begin{pmatrix}
                        A_0 \\
                        B_0
                    \end{pmatrix}
                    ,
                \end{aligned}
            \end{equation}
            where
            \begin{equation}
                \left\{
                \begin{aligned}
                    A_0=&\hat{R}+\frac{1}{2\widetilde{U}^{n+1}}\left((G'(\widetilde{\phi}_h^{n+1}),\gamma_0\phi_0^{n+1}-\hat{\phi})-\tau(\widetilde{\phi}_h^{n+1}\widetilde{\bm{u}}_h^{n+1},\nabla\mu_0^{n+1})+\tau(\widetilde{\phi}_h^{n+1}\nabla\widetilde{\mu}_h^{n+1},\bm{u}_0^{n+1})\right),\\
                    A_1=&\gamma_0\widetilde{U}^{n+1}-\frac{1}{2\widetilde{U}^{n+1}}\left((G'(\widetilde{\phi}_h^{n+1}),\gamma_0\phi_1^{n+1})-\tau(\widetilde{\phi}_h^{n+1}\widetilde{\bm{u}}_h^{n+1},\nabla\mu_1^{n+1})+\tau(\widetilde{\phi}_h^{n+1}\nabla\widetilde{\mu}_h^{n+1},\bm{u}_1^{n+1})\right),\\
                    A_2=&-\frac{\tau}{2\widetilde{U}^{n+1}}(\widetilde{\phi}_h^{n+1}\nabla\widetilde{\mu}_h^{n+1},\bm{u}_2^{n+1}),\\
                    B_0=&\hat{Q}+\frac{\tau}{2}{e}^{t^{n+1}/T}\left[\left((\widetilde{\rho}_h^{n+1}\widetilde{\bm{u}}_h^{n+1}+\widetilde{\bm{J}}_h^{n+1})\cdot\nabla\widetilde{\bm{u}}_h^{n+1},\bm{u}_0^{n+1}\right)-\left((\widetilde{\rho}_h^{n+1}\widetilde{\bm{u}}_h^{n+1}+\widetilde{\bm{J}}_h^{n+1})\cdot\nabla\bm{u}_0^{n+1},\widetilde{\bm{u}}_h^{n+1}\right)\right],\\
                    B_1=&-\frac{\tau}{2}{e}^{t^{n+1}/T}\left[\left((\widetilde{\rho}_h^{n+1}\widetilde{\bm{u}}_h^{n+1}+\widetilde{\bm{J}}_h^{n+1})\cdot\nabla\widetilde{\bm{u}}_h^{n+1},\bm{u}_1^{n+1}\right)-\left((\rho_h^{n+1}\widetilde{\bm{u}}_h^{n+1}+\widetilde{\bm{J}}_h^{n+1})\cdot\nabla\bm{u}_1^{n+1},\widetilde{\bm{u}}_h^{n+1}\right)\right],\\
                    B_2=&e^{-t^{n+1}/T}\left(\gamma_0+\frac{\tau}{T}\right)-\frac{\tau}{2}{e}^{t^{n+1}/T}\left[\left((\widetilde{\rho}_h^{n+1}\widetilde{\bm{u}}_h^{n+1}+\widetilde{\bm{J}}_h^{n+1})\cdot\nabla\widetilde{\bm{u}}_h^{n+1},\bm{u}_2^{n+1}\right)\right.\\
                    &\left.-\left((\widetilde{\rho}_h^{n+1}\widetilde{\bm{u}}_h^{n+1}+\widetilde{\bm{J}}_h^{n+1})\cdot\nabla\bm{u}_2^{n+1},\widetilde{\bm{u}}_h^{n+1}\right)\right].
                \end{aligned}
                \right.
            \end{equation}

            In practice, all coefficient matrices and load vectors could be computed in a single element traversal at each time step. Meanwhile, we need to solve two mixed systems and three saddle point systems with the same coefficient matrix, and one linear algebraic system of two equations. Although this scheme requires additional computations for load vectors and finite element solutions, the advantage of the decoupled nature, ease of implementation, and unconditional energy stability makes it very attractive. Moreover, the systems in Steps 1 and 2 could be solved in parallel since all of them are independent from each other.

            Now we look at the well-posedness and energy stability of \eqref{MCHNSt_1} in the following theorems.
            \begin{theorem}
                The numerical scheme \eqref{MCHNSt_1} is globally mass conservative:
                \begin{equation}
                    \int_\Omega\phi_h^{n+1}\mathrm{d}\bm{x}=\int_\Omega\phi_h^n\mathrm{d}\bm{x}.
                \end{equation}
            \end{theorem}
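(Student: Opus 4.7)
The plan is to exploit the fact that constant functions lie in the finite element space $V_h$ (any standard Lagrange space contains $\mathbb{P}_0$), so that $\omega_h\equiv 1$ is an admissible test function in \eqref{CHNS_phit_1}.

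First I would substitute $\omega_h = 1$ into \eqref{CHNS_phit_1}. Since $\nabla 1 \equiv 0$, both the convective term $\xi_1^{n+1}(\widetilde{\phi}_h^{n+1}\widetilde{\bm{u}}_h^{n+1},\nabla\omega_h)$ and the diffusive term $\frac{1}{\mathrm{Pe}}(\nabla\mu_h^{n+1},\nabla\omega_h)$ on the right-hand side vanish identically. The left-hand side reduces to $\int_\Omega \delta_\tau \phi_h^{n+1}\,\mathrm{d}\bm{x} = 0$, which by the definition of the BDF operator is
\begin{equation*}
\gamma_0\int_\Omega \phi_h^{n+1}\,\mathrm{d}\bm{x} = \int_\Omega \hat{\phi}\,\mathrm{d}\bm{x}.
\end{equation*}

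For the BDF1 case ($J=1$) this is immediately the desired equality $\int_\Omega \phi_h^{n+1}\,\mathrm{d}\bm{x} = \int_\Omega \phi_h^n\,\mathrm{d}\bm{x}$. For the BDF2 case ($J=2$), expanding $\hat{\phi}=2\phi_h^n - \tfrac12\phi_h^{n-1}$ and $\gamma_0=3/2$ yields the three-level relation $\int_\Omega\phi_h^{n+1}\,\mathrm{d}\bm{x} = \tfrac{4}{3}\int_\Omega\phi_h^n\,\mathrm{d}\bm{x} - \tfrac{1}{3}\int_\Omega\phi_h^{n-1}\,\mathrm{d}\bm{x}$. I would then close the argument by induction, using that the BDF2 scheme is initialized by a BDF1 step so that $\int_\Omega \phi_h^1\,\mathrm{d}\bm{x} = \int_\Omega \phi_h^0\,\mathrm{d}\bm{x}$; the inductive hypothesis $\int_\Omega \phi_h^n\,\mathrm{d}\bm{x}=\int_\Omega\phi_h^{n-1}\,\mathrm{d}\bm{x}$ then gives $\int_\Omega \phi_h^{n+1}\,\mathrm{d}\bm{x}=\int_\Omega\phi_h^n\,\mathrm{d}\bm{x}$.

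The argument is essentially a one-line observation; no genuine obstacle appears. The only subtle point worth flagging is that the conclusion implicitly relies on the admissibility of constants in $V_h$ and, in the BDF2 case, on a mass-preserving start-up of the time stepping (which a single initial BDF1 step furnishes at no extra cost).
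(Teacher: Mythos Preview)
Your proposal is correct and follows exactly the paper's approach: the paper's proof consists of the single sentence ``This identity could be easily derived by taking $\omega_h=1$ in \eqref{CHNS_phit_1}.'' Your additional care in unpacking the BDF2 case via induction (using a mass-preserving BDF1 start-up) is a legitimate elaboration that the paper leaves implicit.
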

            \begin{proof}
                This identity could be easily derived by taking $\omega_h=1$ in \eqref{CHNS_phit_1}.
            \end{proof}
            \begin{theorem}
                The numerical scheme \eqref{MCHNSt_1} admits a unique solution.
            \end{theorem}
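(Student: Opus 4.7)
The plan is to exploit the decomposition \eqref{update}, which reduces \eqref{MCHNSt_1} to: (i) the two linear Cahn--Hilliard type mixed systems \eqref{phase_0} and \eqref{phase_1} for $(\phi_i^{n+1},\mu_i^{n+1})$, $i=0,1$; (ii) the three generalized Stokes saddle point systems \eqref{flow_u0}--\eqref{flow_u2} for $(\bm{u}_j^{n+1},P_j^{n+1})$, $j=0,1,2$; and (iii) the closing $2\times 2$ linear system \eqref{CHNS_xi} for $(\xi_1^{n+1},\xi_2^{n+1})$. The coefficient operators in (i) and (ii) are independent of the scalar auxiliary variables, so once these five sub-problems and the $2\times 2$ system are shown to be uniquely solvable, unique solvability of the full scheme follows by linearity. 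Since all discrete spaces are finite dimensional, it is enough to check that each linear problem has only the trivial solution when its data vanishes.

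For (i), consider the homogeneous version of \eqref{phase_0}. Taking $\omega_h=\mu_0^{n+1}$ in the first equation and $\psi_h=\gamma_0\phi_0^{n+1}$ in the second, and subtracting the resulting identities, the cross terms $\gamma_0(\phi_0^{n+1},\mu_0^{n+1})$ cancel and we obtain
\begin{equation*}
\gamma_0\mathrm{Cn}^2\|\nabla\phi_0^{n+1}\|^2+\gamma_0 s\|\phi_0^{n+1}\|^2+\frac{\tau}{\mathrm{Pe}}\|\nabla\mu_0^{n+1}\|^2=0.
\end{equation*}
Since $\gamma_0,s,\mathrm{Cn},\tau,\mathrm{Pe}>0$, this yields $\phi_0^{n+1}\equiv 0$ and $\nabla\mu_0^{n+1}\equiv 0$; substitution back into the second equation then forces $\mu_0^{n+1}\equiv 0$. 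The identical argument settles \eqref{phase_1}.

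For (ii), the cut-off described in the preceding remark guarantees that $\widetilde\rho_h^{n+1}$ and $\widetilde\eta_h^{n+1}$ are bounded below by positive constants. Consequently the bilinear form $a(\bm u,\bm v):=\gamma_0(\widetilde\rho_h^{n+1}\bm u,\bm v)+(2\tau/\mathrm{Re})(\widetilde\eta_h^{n+1}\mathbb{D}(\bm u),\mathbb{D}(\bm v))$ is symmetric and coercive on $\bm X_h\subset H_0^1(\Omega)^d$ by Korn's inequality, while the assumed LBB condition on the pair $\bm X_h\times M_h$ supplies the discrete inf--sup bound. Brezzi's theorem for mixed problems then yields unique solvability of each of \eqref{flow_u0}, \eqref{flow_u1} and \eqref{flow_u2}.

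The main obstacle is (iii), i.e., showing that the coefficient matrix of \eqref{CHNS_xi} is nonsingular. The plan is to examine the homogeneous version ($A_0=B_0=0$) and prove that its only solution is $\xi_1^{n+1}=\xi_2^{n+1}=0$. The key structural input is that the convective operator in \eqref{CHNS_rhout_1} has been rewritten in skew-symmetric form, and the ZEC terms appearing in \eqref{CHNS_Rt_1} and \eqref{CHNS_Qt_1} were explicitly designed to reproduce precisely the indefinite contributions generated when \eqref{flow_u1} and \eqref{flow_u2} are tested against $\bm v_h=\xi_1^{n+1}\bm u_1^{n+1}+\xi_2^{n+1}\bm u_2^{n+1}$. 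After combining these tested Stokes identities with the homogeneous versions of \eqref{CHNS_Rt_1} and \eqref{CHNS_Qt_1}, mimicking the energy argument used in the unconditional stability proof, the skew-symmetric off-diagonal couplings cancel and we are left with an identity of the form
\begin{equation*}
\gamma_0(\xi_1^{n+1})^2+e^{-t^{n+1}/T}\gamma_0(\xi_2^{n+1})^2+(\text{nonnegative viscous and mobility contributions})=0,
\end{equation*}
which forces $\xi_1^{n+1}=\xi_2^{n+1}=0$ and completes the proof.
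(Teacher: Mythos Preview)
Your treatment of (i) and (ii) is correct. For (i) your homogeneous-data argument, exploiting the $s\|\phi_h\|^2$ term to force $\phi_i^{n+1}=0$ and then back-substituting for $\mu_i^{n+1}$, is in fact more direct than the paper's, which detours through the mean-zero subspace $\bar H^1\times\bar H^1$ before recovering the full solution. Part (ii) matches the paper.

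For (iii) the strategy is right in spirit but the execution is too loose. Two points need fixing. First, the correct multipliers for the two rows of the homogeneous $2\times 2$ system are $\tfrac{2\widetilde U^{n+1}}{\mathrm{We}\,\mathrm{Cn}}\,\xi_1^{n+1}$ and $e^{-t^{n+1}/T}\xi_2^{n+1}$ (these are the weights dictated by the energy proof); your displayed leading coefficients $\gamma_0$ and $e^{-t^{n+1}/T}\gamma_0$ do not arise from any such choice. Second, the claim that ``the skew-symmetric off-diagonal couplings cancel'' is misleading: after the ZEC terms absorb the convective and surface-tension pieces there remains the \emph{symmetric} cross term
\[
C=\gamma_0\bigl((\widetilde\sigma_h\bm u_1)^{n+1},(\widetilde\sigma_h\bm u_2)^{n+1}\bigr)+\tfrac{2\tau}{\mathrm{Re}}\bigl(\widetilde\eta_h^{n+1}\mathbb{D}(\bm u_1^{n+1}),\mathbb{D}(\bm u_2^{n+1})\bigr)
\]
from the Stokes bilinear form itself. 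It does not cancel; with the correct weights it combines with the diagonal viscous pieces into the nonnegative quantity $\gamma_0\|\widetilde\sigma_h^{n+1}(\xi_1\bm u_1+\xi_2\bm u_2)^{n+1}\|^2+\tfrac{2\tau}{\mathrm{Re}}\|(\widetilde\eta_h^{n+1})^{1/2}\mathbb{D}(\xi_1\bm u_1+\xi_2\bm u_2)^{n+1}\|^2$, and the strictly positive leftovers $\tfrac{2\gamma_0(\widetilde U^{n+1})^2}{\mathrm{We}\,\mathrm{Cn}}(\xi_1^{n+1})^2$ and $e^{-2t^{n+1}/T}(\gamma_0+\tau/T)(\xi_2^{n+1})^2$ then force $\xi_1^{n+1}=\xi_2^{n+1}=0$.

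The paper handles (iii) by a different but equivalent route: it rewrites $A_1,A_2,B_1,B_2$ explicitly by testing \eqref{phase_1}--\eqref{flow_u2} with suitable functions and then shows the determinant is positive, $A_2B_1<A_1B_2$, via Cauchy--Schwarz in the form $C^2\le N_1N_2$ (with $N_j$ the Stokes-energy norm of $\bm u_j^{n+1}$). This is precisely the $2\times2$ Gram-matrix version of your positive-definiteness argument, but it makes every term explicit and avoids having to discover the right multipliers.
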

            \begin{proof}
                The well-posedness of \eqref{phase_0}-\eqref{phase_1} in Step 1 cannot be directly derived from Lax-Milgram theorem \cite{2021_Ern_1}, since there is no guarantee for the $H^1$ stability of $\mu$  (the bilinear form only involves $\nabla\mu$). In fact, they are well-posed in $\bar{H}^1(\Omega)\times\bar{H}^1(\Omega)$, where $\bar{H}^1(\Omega):=\left\{\varphi\in{H}^1(\Omega):\int_\Omega\varphi=0\right\}$. By defining $\hat{\phi}=\phi-\int_\Omega\phi,\hat{\mu}=\mu-\int_\Omega\mu$, we are able to recover $(\phi,\mu)$ form $(\hat{\phi},\hat{\mu})$, which implies the well-posedness in $H^1(\Omega)\times{H}^1(\Omega)$; more technical details could be seen in~\cite{2020_yang}. Since $\bm{X}_h\times{M}_h$ satisfies the LBB condition, by Babu\v{s}ka-Brezzi theorem \cite{2021_Ern_1}, we get the unique solvability of \eqref{flow_u0}-\eqref{flow_u2} in Step 2.

                With regard to the linear system \eqref{CHNS_xi} in Step 3, we now prove that $A_2B_1\neq{A}_1B_2$. Taking $(\psi_h,\omega_h)=(\gamma_0\phi_1^{n+1},\mu_1^{n+1})$ in \eqref{phase_1}, the following equalities become available:
                \begin{equation}
                    \begin{aligned}
                        \gamma_0(\phi_1^{n+1},\mu_1^{n+1})+\frac{\tau}{\mathrm{Pe}}\|\nabla\mu_1^{n+1}\|^2=&\tau(\widetilde{\phi}_h^{n+1}\widetilde{\bm{u}}_h^{n+1},\nabla\mu_1^{n+1}),\\
                        \gamma_0(\mu_1^{n+1},\phi_1^{n+1})-\gamma_0\mathrm{Cn}^2\|\nabla\phi_1^{n+1}\|^2-\gamma_0s\|\phi_1^{n+1}\|^2=&(G'(\widetilde{\phi}_h^{n+1}),\gamma_0\phi_1^{n+1}) .
                    \end{aligned}
                \end{equation}
                Taking $(\bm{v}_h,q_h)=(\bm{u}_1^{n+1},P_1^{n+1}),(\bm{u}_2^{n+1},0)$ and $(\bm{0},P_2^{n+1})$ in \eqref{flow_u1}, respectively, we see that
                \begin{equation}
                    \begin{aligned}
                        \gamma_0\|(\widetilde{\sigma}_h\bm{u}_1)^{n+1}\|^2+\frac{2\tau}{\mathrm{Re}}\|(\widetilde{\eta}_h^{n+1})^\frac{1}{2}\mathbb{D}(\bm{u}_1^{n+1})\|^2=&-\frac{\tau}{\mathrm{We}\mathrm{Cn}}(\widetilde{\phi}_h^{n+1}\nabla\widetilde{\mu}_h^{n+1},\bm{u}_1^{n+1}),\\
                        \gamma_0((\widetilde{\sigma}_h\bm{u}_1)^{n+1},(\widetilde{\sigma}_h\bm{u}_2)^{n+1})+\frac{2\tau}{\mathrm{Re}}\left(\widetilde{\eta}^{n+1}\mathbb{D}(\bm{u}_1^{n+1}),\mathbb{D}(\bm{u}_2^{n+1})\right)-&\tau(P_1^{n+1},\nabla\cdot\bm{u}_2^{n+1})\\
                        =&-\frac{\tau}{\mathrm{We}\mathrm{Cn}}(\widetilde{\phi}_h^{n+1}\nabla\widetilde{\mu}_h^{n+1},\bm{u}_2^{n+1}),\\
                        \tau(\nabla\cdot\bm{u}_1^{n+1},P_2^{n+1})=&0.\\
                    \end{aligned}
                \end{equation}
                Taking $(\bm{v}_h,q_h)=(\bm{u}_1^{n+1},0),(\bm{0},P_1^{n+1})$ and $(\bm{u}_2^{n+1},P_2^{n+1})$ in \eqref{flow_u2}, respectively, we obtain
                \begin{equation}
                    \begin{aligned}
                        \gamma_0((\widetilde{\sigma}_h\bm{u}_2)^{n+1},(\widetilde{\sigma}_h\bm{u}_1)^{n+1})+\frac{2\tau}{\mathrm{Re}}&\left(\widetilde{\eta}_h^{n+1}\mathbb{D}(\bm{u}_2^{n+1}),\mathbb{D}(\bm{u}_1^{n+1})\right)-\tau(P_2^{n+1},\nabla\cdot\bm{u}_1^{n+1})\\
                        =&-\frac{\tau}{2}\left[\left((\widetilde{\rho}_h^{n+1}\widetilde{\bm{u}}_h^{n+1}+\widetilde{\bm{J}}_h^{n+1})\cdot\nabla\widetilde{\bm{u}}_h^{n+1},\bm{u}_1^{n+1}\right)\right.\\
                        &\left.-\left((\widetilde{\rho}_h^{n+1}\widetilde{\bm{u}}_h^{n+1}+\widetilde{\bm{J}}_h^{n+1})\cdot\nabla\bm{u}_1^{n+1},\widetilde{\bm{u}}_h^{n+1}\right)\right],\\
                        \tau(\nabla\cdot\bm{u}_2^{n+1},P_1^{n+1})=&0,\\
                        \gamma_0\|(\widetilde{\sigma}_h\bm{u}_2)^{n+1}\|^2+\frac{2\tau}{\mathrm{Re}}\|(\widetilde{\eta}_h^{n+1})^\frac{1}{2}\mathbb{D}(\bm{u}_2^{n+1})\|^2=&-\frac{\tau}{2}\left[\left((\widetilde{\rho}_h^{n+1}\widetilde{\bm{u}}_h^{n+1}+\widetilde{\bm{J}}_h^{n+1})\cdot\nabla\widetilde{\bm{u}}_h^{n+1},\bm{u}_2^{n+1}\right)\right.\\
                        &\left.-\left((\widetilde{\rho}_h^{n+1}\widetilde{\bm{u}}_h^{n+1}+\widetilde{\bm{J}}_h^{n+1})\cdot\nabla\bm{u}_2^{n+1},\widetilde{\bm{u}}_h^{n+1}\right)\right].
                    \end{aligned}
                \end{equation}
                As a result, $A_1,A_2,B_1,B_2$ could be reformulated as
                \begin{equation}
                    \begin{aligned}
                        A_1=&\gamma_0\widetilde{U}^{n+1}+\frac{1}{2\widetilde{U}^{n+1}}\left(\gamma_0\mathrm{Cn}^2\|\nabla\phi_1^{n+1}\|^2+\gamma_0s\|\phi_1^{n+1}\|^2+\frac{\tau}{\mathrm{Pe}}\|\nabla\mu_1^{n+1}\|^2\right.\\
                        &\left.+\gamma_0\mathrm{We}\mathrm{Cn}\|(\widetilde{\sigma}_h\bm{u}_1)^{n+1}\|^2+\frac{2\tau\mathrm{We}\mathrm{Cn}}{\mathrm{Re}}\|(\widetilde{\eta}_h^{n+1})^\frac{1}{2}\mathbb{D}(\bm{u}_1^{n+1})\|^2\right),\\
                        A_2=&\frac{\mathrm{We}\mathrm{Cn}}{2\widetilde{U}^{n+1}}\left[\gamma_0((\widetilde{\sigma}_h\bm{u}_1)^{n+1},(\widetilde{\sigma}_h\bm{u}_2)^{n+1})+\frac{2\tau}{\mathrm{Re}}\left(\widetilde{\eta}_h^{n+1}\mathbb{D}(\bm{u}_1^{n+1}),\mathbb{D}(\bm{u}_2^{n+1})\right)\right],\\
                        B_1=&e^{t^{n+1}/T}\left[\gamma_0((\widetilde{\sigma}_h\bm{u}_2)^{n+1},(\widetilde{\sigma}_h\bm{u}_1)^{n+1})+\frac{2\tau}{\mathrm{Re}}\left(\widetilde{\eta}_h^{n+1}\mathbb{D}(\bm{u}_2^{n+1}),\mathbb{D}(\bm{u}_1^{n+1})\right)\right],\\
                        B_2=&e^{-t^{n+1}/T}\left(\gamma_0+\frac{\tau}{T}\right)+e^{t^{n+1}/T}\left(\gamma_0\|(\widetilde{\sigma}_h\bm{u}_2)^{n+1}\|^2+\frac{2\tau}{\mathrm{Re}}\|(\widetilde{\eta}_h^{n+1})^\frac{1}{2}\mathbb{D}(\bm{u}_2^{n+1})\|^2\right).
                    \end{aligned}
                \end{equation}
                Since $\widetilde{U}^{n+1}>0$, an application of Cauchy inequality gives
                \begin{equation}
                    \begin{aligned}
                    A_2B_1=&\frac{\mathrm{We}\mathrm{Cn}\;e^{t^{n+1}/T}}{2\widetilde{U}^{n+1}}\left(\gamma_0((\widetilde{\sigma}_h\bm{u}_1)^{n+1},(\widetilde{\sigma}_h\bm{u}_2)^{n+1})+\frac{2\tau}{\mathrm{Re}}\left(\widetilde{\eta}_h^{n+1}\mathbb{D}(\bm{u}_1^{n+1}),\mathbb{D}(\bm{u}_2^{n+1})\right)\right)^2\\
                    &\le\frac{\mathrm{We}\mathrm{Cn}\;e^{t^{n+1}/T}}{2\widetilde{U}^{n+1}}\left(\gamma_0\|(\widetilde{\sigma}_h\bm{u}_1)^{n+1}\|^2+\frac{2\tau}{\mathrm{Re}}\|(\widetilde{\eta}_h^{n+1})^\frac{1}{2}\mathbb{D}(\bm{u}_1^{n+1})\|^2\right)\\
                    &\times\left(\gamma_0\|(\widetilde{\sigma}\bm{u}_2)^{n+1}\|^2+\frac{2\tau}{\mathrm{Re}}\|(\widetilde{\eta}_h^{n+1})^\frac{1}{2}\mathbb{D}(\bm{u}_2^{n+1})\|^2\right)\\
                    &<{A}_1B_2,
                \end{aligned}
                \end{equation}
                which implies the desired result.
            \end{proof}
            \begin{remark}
                In the case of a degenerate mobility, such as $m=\left(\phi^2-1\right)^2$, the uniqueness of $(\phi_h,\mu_h)$ for \eqref{phase_0}-\eqref{phase_1} (in Step 1) turns out to be a direct consequence of the discrete energy law. Subsequently, by employing the Fredholm alternative, we obtain the unique solvability of the discrete system in Step 1.
            \end{remark}
            \begin{theorem}
                \label{discrete_energy_estimate_1}
                In the absence of $\widetilde{f}_h^{n+1},$ the scheme \eqref{MCHNSt_1} satisfies the following modified discrete energy law:
                \begin{equation}
                    \mathscr{E}^{n+1}-\mathscr{E}^n=-\mathscr{Q}^{n+1}-\frac{4\tau}{\mathrm{Re}}\|(\widetilde{\eta}_h^{n+1})^\frac{1}{2}\mathbb{D}(\bm{u}_h^{n+1})\|^2-\frac{2\tau}{T}|{Q}^{n+1}|^2-\frac{2\tau}{\mathrm{Pe}\mathrm{We}\mathrm{Cn}}\|\nabla\mu_h^{n+1}\|^2,
                \end{equation}
                where
                \begin{equation}
                    \label{discrete_energy_1}
                    \mathscr{E}^n=
                    \begin{cases}
                        \|(\widetilde{\sigma}\bm{u})_h^n\|^2+\frac{\mathrm{Cn}}{\mathrm{We}}\|\nabla\phi_h^n\|^2+\frac{s}{\mathrm{We}\mathrm{Cn}}\|\phi_h^n\|^2+\frac{2}{\mathrm{We}\mathrm{Cn}}|R^n|^2+|Q^n|^2 , & J=1,\\
                        \frac{1}{2}\|(\widetilde{\sigma}\bm{u})_h^n\|^2+\frac{1}{2}\|\widetilde{(\widetilde{\sigma}\bm{u})}_h^{n+1}\|^2+\frac{\mathrm{Cn}}{2\mathrm{We}}\|\nabla\phi_h^n\|^2+\frac{\mathrm{Cn}}{2\mathrm{We}}\|\nabla\widetilde{\phi}_h^{n+1}\|^2+\frac{s}{2\mathrm{We}\mathrm{Cn}}\|\phi_h^n\|^2\\
                        +\frac{s}{2\mathrm{We}\mathrm{Cn}}\|\widetilde{\phi}_h^{n+1}\|^2+\frac{1}{\mathrm{We}\mathrm{Cn}}|R^n|^2+\frac{1}{\mathrm{We}\mathrm{Cn}}|\widetilde{R}^{n+1}|^2+\frac{1}{2}|Q^n|^2+\frac{1}{2}|\widetilde{Q}^{n+1}|^2 , & J=2,\\
                    \end{cases}
                \end{equation}
                and
                \begin{equation}
                    \mathscr{Q}^n=
                    \begin{cases}
                        \|\mathscr{D}(\widetilde{\sigma}\bm{u})_h^n\|^2+\frac{s}{\mathrm{We}\mathrm{Cn}}\|\mathscr{D}\phi_h^n\|^2+\frac{\mathrm{Cn}}{\mathrm{We}}\|\nabla\mathscr{D}\phi_h^n\|^2+\frac{2}{\mathrm{We}\mathrm{Cn}}|\mathscr{D}R^n|^2+|\mathscr{D}Q^n|^2  ,                                  & J=1, \\
                        \frac{1}{2}\|\mathscr{D}^2(\widetilde{\sigma}\bm{u})_h^n\|^2+\frac{\mathrm{Cn}}{2\mathrm{We}}\|\nabla\mathscr{D}^2\phi_h^n\|^2+\frac{s}{2\mathrm{We}\mathrm{Cn}}\|\mathscr{D}^2\phi_h^n\|^2+\frac{1}{\mathrm{We}\mathrm{Cn}}|\mathscr{D}^2R^n|^2+\frac{1}{2}|\mathscr{D}^2Q^n|^2 , & J=2.
                    \end{cases}
                \end{equation}
            \end{theorem}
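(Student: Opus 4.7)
The plan is to test each equation in \eqref{MCHNSt_1} with a scaled test function and sum the resulting identities, so that every $\xi_1$- and $\xi_2$-dependent nonlinear coupling cancels identically by the ZEC design. Concretely, I would test \eqref{CHNS_phit_1} with $\omega_h=\tfrac{2\tau}{\mathrm{WeCn}}\mu_h^{n+1}$ and \eqref{CHNS_mut_1} with $\psi_h=\tfrac{2\tau}{\mathrm{WeCn}}\delta_\tau\phi_h^{n+1}$ (their left-hand sides coincide, so one side may be substituted into the other), multiply \eqref{CHNS_Rt_1} by $\tfrac{4\tau R^{n+1}}{\mathrm{WeCn}}$, test \eqref{CHNS_rhout_1} with $(\bm{v}_h,q_h)=(2\tau\bm{u}_h^{n+1},2\tau P_h^{n+1})$ so that the two pressure couplings cancel pairwise, and multiply \eqref{CHNS_Qt_1} by $2\tau Q^{n+1}$.

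The ZEC cancellations rely on the key identities $\xi_1^{n+1}=R^{n+1}/\widetilde{U}^{n+1}$ and $\xi_2^{n+1}=e^{t^{n+1}/T}Q^{n+1}$. The former turns the prefactor from the $R$-equation test into $\tfrac{2\tau\xi_1^{n+1}}{\mathrm{WeCn}}$, which exactly matches the $\xi_1^{n+1}$-scaled couplings produced by the other tests: the $(G'(\widetilde{\phi}_h^{n+1}),\delta_\tau\phi_h^{n+1})$ term from \eqref{CHNS_mut_1} and the convective $(\widetilde{\phi}_h^{n+1}\widetilde{\bm{u}}_h^{n+1},\nabla\mu_h^{n+1})$ term from \eqref{CHNS_phit_1} are eliminated by their counterparts in the $R$-equation, while the residual $(\widetilde{\phi}_h^{n+1}\nabla\widetilde{\mu}_h^{n+1},\bm{u}_h^{n+1})$ piece from the $R$-equation is neutralized by the capillary-force term of \eqref{CHNS_rhout_1} tested against $\bm{u}_h^{n+1}$. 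Similarly, the skew-symmetric convective bracket in \eqref{CHNS_rhout_1} with $\bm{v}_h=\bm{u}_h^{n+1}$ cancels exactly against the $\xi_2^{n+1}$-scaled bracket produced by \eqref{CHNS_Qt_1}.

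What remains is to convert each time-derivative contribution into a telescoping BDF quadratic form. For the velocity I rewrite $(\widetilde{\sigma}_h^{n+1}\delta_\tau(\widetilde{\sigma}\bm{u})_h^{n+1},\bm{u}_h^{n+1}) = (\delta_\tau(\widetilde{\sigma}\bm{u})_h^{n+1},(\widetilde{\sigma}\bm{u})_h^{n+1})$, so that the scalar-style BDF identities apply to the composite quantity $(\widetilde{\sigma}\bm{u})_h^{n+1}$. For $J=1$, the identity $2\tau(\delta_\tau\chi^{n+1},\chi^{n+1})=\|\chi^{n+1}\|^2-\|\chi^n\|^2+\|\mathscr{D}\chi^{n+1}\|^2$ produces exactly the stated $\mathscr{E}^n$ and $\mathscr{Q}^n$. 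For $J=2$, the BDF2 G-stability identity $2\tau(\delta_\tau\chi^{n+1},\chi^{n+1}) = \tfrac12(\|\chi^{n+1}\|^2+\|\widetilde{\chi}^{n+2}\|^2-\|\chi^n\|^2-\|\widetilde{\chi}^{n+1}\|^2)+\tfrac12\|\mathscr{D}^2\chi^{n+1}\|^2$ delivers precisely the extrapolation-augmented quadratic forms in \eqref{discrete_energy_1}. After all cancellations the only surviving right-hand-side contributions are the non-positive dissipative terms listed in the theorem.

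The main obstacle is the careful bookkeeping of the $\xi_1$- and $\xi_2$-scaled nonlinear terms: one must verify that each of the three pairs of ZEC terms cancels pointwise, with the exact matching signs and scalings. A secondary subtlety is applying the BDF2 G-stability identity consistently and simultaneously to all five variables $(\widetilde{\sigma}\bm{u},\nabla\phi,\phi,R,Q)$, so that the extrapolations $\widetilde{\chi}^{n+1}$ and $\widetilde{\chi}^{n+2}$ telescope correctly between $\mathscr{E}^n$ and $\mathscr{E}^{n+1}$ as defined in \eqref{discrete_energy_1}.
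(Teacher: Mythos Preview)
Your proposal is correct and follows essentially the same approach as the paper: the same choice of test functions (up to the harmless cosmetic difference that you fold the $\tfrac{1}{\mathrm{WeCn}}$ scaling into the test functions while the paper applies it when summing), the same ZEC cancellations via $\xi_1^{n+1}=R^{n+1}/\widetilde U^{n+1}$ and $\xi_2^{n+1}=e^{t^{n+1}/T}Q^{n+1}$, and the same BDF telescoping identities for $J=1,2$. Your identification of the rewrite $(\widetilde{\sigma}_h^{n+1}\delta_\tau(\widetilde{\sigma}\bm{u})_h^{n+1},\bm{u}_h^{n+1})=(\delta_\tau(\widetilde{\sigma}\bm{u})_h^{n+1},(\widetilde{\sigma}\bm{u})_h^{n+1})$ and the pairwise pressure cancellation is exactly what the paper uses.
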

            \begin{proof}
                Taking $(\psi_h,\omega_h)=(2\tau\delta_\tau\phi_h^{n+1},2\tau\mu_h^{n+1})$ in \eqref{CHNS_phit_1}-\eqref{CHNS_mut_1} and multiplying \eqref{CHNS_Rt_1} by $4\tau{R}^{n+1}$, we get
                \begin{equation}
                    \label{discrete_energy_phase_1}
                    \begin{aligned}
                        (\delta_\tau\phi_h^{n+1},2\tau\mu_h^{n+1})=&\xi_1^{n+1}(\widetilde{\phi}_h^{n+1}\widetilde{\bm{u}}_h^{n+1},2\tau\nabla\mu_h^{n+1})-\frac{2\tau}{\mathrm{Pe}}\|\nabla\mu_h^{n+1}\|^2,\\
                        (\mu_h^{n+1},2\tau\delta_\tau\phi_h^{n+1})=&\mathrm{Cn}^2(\nabla\phi_h^{n+1},2\tau\delta_\tau\nabla\phi_h^{n+1})+s(\phi_h^{n+1},2\tau\delta_\tau\phi_h^{n+1})+\xi_1^{n+1}(G'(\widetilde{\phi}_h^{n+1}),2\tau\delta_\tau\phi_h^{n+1}),\\
                        \delta_\tau{R}^{n+1}4\tau{R}^{n+1}=&\xi_1^{n+1}\left((G'(\widetilde{\phi}_h^{n+1}),2\tau\delta_\tau\phi_h^{n+1})-(\widetilde{\phi}_h^{n+1}\widetilde{\bm{u}}_h^{n+1},2\tau\nabla\mu_h^{n+1})+(\widetilde{\phi}_h^{n+1}\nabla\widetilde{\mu}_h^{n+1},2\tau\bm{u}_h^{n+1})\right).
                    \end{aligned}
                \end{equation}
                Taking $(\bm{v}_h,q_h)=(2\tau\bm{u}_h^{n+1},2\tau{P}_h^{n+1})$ in \eqref{CHNS_rhout_1} and multiplying \eqref{CHNS_Qt_1} by $2\tau{Q}^{n+1}$, we see that
                \begin{equation}
                    \label{discrete_energy_flow_1}
                    \begin{aligned}
                        \left(\delta_\tau(\widetilde{\sigma}\bm{u})_h^{n+1},2\tau(\widetilde{\sigma}\bm{u})_h^{n+1}\right)+\frac{4\tau}{\mathrm{Re}}\|(\widetilde{\eta}_h^{n+1})^\frac{1}{2}\mathbb{D}(\bm{u}_h^{n+1})\|^2
                        =&-\frac{\xi_2^{n+1}}{2}\left[\left((\widetilde{\rho}_h^{n+1}\widetilde{\bm{u}}_h^{n+1}+\widetilde{\bm{J}}_h^{n+1})\cdot\nabla\widetilde{\bm{u}}_h^{n+1},2\tau\bm{u}_h^{n+1}\right)\right.\\
                        &\left.-\left(2\tau(\widetilde{\rho}_h^{n+1}\widetilde{\bm{u}}_h^{n+1}+\widetilde{\bm{J}}_h^{n+1})\cdot\nabla\bm{u}_h^{n+1},\widetilde{\bm{u}}_h^{n+1}\right)\right]\\
                        &-\frac{\xi_1^{n+1}}{\mathrm{We}\mathrm{Cn}}(\widetilde{\phi}_h^{n+1}\nabla\widetilde{\mu}_h^{n+1},2\tau\bm{u}_h^{n+1}),\\
                        \delta_\tau{Q}^{n+1}2\tau{Q}^{n+1}+\frac{2\tau}{T}|{Q}^{n+1}|^2=&\frac{\xi_2^{n+1}}{2}\left[\left((\widetilde{\rho}_h^{n+1}\widetilde{\bm{u}}_h^{n+1}+\widetilde{\bm{J}}_h^{n+1})\cdot\nabla\widetilde{\bm{u}}_h^{n+1},2\tau\bm{u}_h^{n+1}\right)\right.\\
                        &\left.-\left(2\tau(\widetilde{\rho}_h^{n+1}\widetilde{\bm{u}}_h^{n+1}+\widetilde{\bm{J}}_h^{n+1})\cdot\nabla\bm{u}_h^{n+1},\widetilde{\bm{u}}_h^{n+1}\right)\right].
                    \end{aligned}
                \end{equation}
                A summation of \eqref{discrete_energy_phase_1} and \eqref{discrete_energy_flow_1} gives
                \begin{equation}
                    \label{discrete_energy_mid_1}
                    \begin{aligned}
                        \frac{s}{\mathrm{We}\mathrm{Cn}}&(\phi_h^{n+1},2\tau\delta_\tau\phi_h^{n+1})+\frac{\mathrm{Cn}}{\mathrm{We}}(\nabla\phi_h^{n+1},2\tau\delta_\tau\nabla\phi_h^{n+1})+\frac{1}{\mathrm{We}\mathrm{Cn}}\delta_\tau{R}^{n+1}4\tau{R}^{n+1}+\left(\delta_\tau(\widetilde{\sigma}\bm{u})_h^{n+1},2\tau(\widetilde{\sigma}\bm{u})_h^{n+1}\right)\\
                        &+\delta_\tau{Q}^{n+1}2\tau{Q}^{n+1}+\frac{4\tau}{\mathrm{Re}}\|(\widetilde{\eta}_h^{n+1})^\frac{1}{2}\mathbb{D}(\bm{u}_h^{n+1})\|^2+\frac{2\tau}{T}|{Q}^{n+1}|^2+\frac{2\tau}{\mathrm{Pe}\mathrm{We}\mathrm{Cn}}\|\nabla\mu_h^{n+1}\|^2\\
                        =&0.
                    \end{aligned}
                \end{equation}
                In turn, the desired result is obtained by applying the following identity for a generic variable $\chi$:
                \begin{equation}
                    (\delta_\tau\chi^{n+1})2\tau\chi^{n+1}=
                    \begin{cases}
                        |\chi^{n+1}|^2-|\chi^n|^2+|\mathscr{D}\chi^{n+1}|^2  ,                                                                             & J=1, \\
                        \frac{1}{2}\left(|\chi^{n+1}|^2-|\chi^n|^2+|\widetilde{\chi}^{n+2}|^2-|\widetilde{\chi}^{n+1}|^2+|\mathscr{D}^2\chi^{n+1}|^2\right) ,  & J=2.
                    \end{cases}
                \end{equation}
            \end{proof}

        \subsection{The artificial compressibility scheme}
            In the algorithm \eqref{MCHNSt_1}, the Navier-Stokes equations are solved by a saddle point approach, which turns out to be computationally expensive. A decoupled structure is much more desired than a saddle point system in the large-scale simulations. Among the available approaches for incompressible flows with variable density (projection method \cite{2000_guermond}, gauge Uzawa method \cite{2007_pyo, 2017_wu}, pressure stabilization method \cite{2009_guermond, 2011_guermond}), the energy stability analysis of second-order in time scheme is only available for the pressure stabilization method, and only a suboptimal estimate was derived in \cite{2011_guermond}. However, the pressure stabilization method is not applicable to decouple the Stokes solver that appears in \eqref{MCHNSt_1} (see Remark \ref{reason_PS} below). Apart from the above-mentioned efforts, which have been extended to incompressible flows with variable density, the less investigated artificial compressibility method might be a substitute choice. The following perturbation of the Navier-Stokes equations is considered in this method:
            \begin{equation}
                \begin{aligned}
                    \rho\left(\bm{u}_t+\bm{u}\cdot\nabla\bm{u}\right)-\nabla\cdot\left(2\eta\mathbb{D}(\bm{u})\right)+\nabla{p}=&\bm{f},\\
                    -\nabla\cdot\bm{u}-\frac{\varepsilon}{\rho}{P}_t=&0,
                \end{aligned}
            \end{equation}
            where $\varepsilon$ is a very small and positive perturbation parameter (in a scale of $\mathcal{O}(\tau^2)$ \cite{2017_decaria}). In fact, it is not a new idea, and this method has been employed in \cite{2022_gao} (by Gao et al.) to develop a fully decoupled, first-order in time finite element scheme for the Cahn-Hilliard-Navier-Stokes-Darcy system with different densities. To obtain the second-order accuracy in time, we follow the ideas of \cite{2022_gao, 2009_guermond} and propose the following fully discrete finite element scheme: find $(\phi_h^{n+1},\mu_h^{n+1})\in{V}_h\times{V}_h,\bm{u}_h^{n+1}\in\bm{X}_h,P_h^{n+1}\in\times{N}_h,R^{n+1}\in\mathbb{R}^+,Q^{n+1}\in\mathbb{R}^+,$ such that for all $(\psi_h,\omega_h)\in{V}_h\times{V}_h,\bm{v}_h\in\bm{X}_h,q_h\in{N}_h,$
            \begin{subequations}
                \label{MCHNSt_2}
                \begin{align}
                    (\delta_\tau\phi_h^{n+1},\omega_h)=&\xi_1^{n+1}(\widetilde{\phi}_h^{n+1}\widetilde{\bm{u}}_h^{n+1},\nabla\omega_h)-\frac{1}{\mathrm{Pe}}(\nabla\mu_h^{n+1},\nabla\omega_h),\label{CHNS_phit_2}\\
                    (\mu_h^{n+1},\psi_h)=&\mathrm{Cn}^2(\nabla\phi_h^{n+1},\nabla\psi_h)+s(\phi_h^{n+1},\psi_h)+\xi_1^{n+1}(G'(\widetilde{\phi}_h^{n+1}),\psi_h),\label{CHNS_mut_2}\\
                    \delta_\tau{R}^{n+1}=&\frac{1}{2U^{n+1}}\left((G'(\widetilde{\phi}_h^{n+1}),\delta_\tau\phi_h^{n+1})-(\widetilde{\phi}_h^{n+1}\widetilde{\bm{u}}_h^{n+1},\nabla\mu_h^{n+1})+(\widetilde{\phi}_h^{n+1}\nabla\widetilde{\mu}_h^{n+1},\bm{u}_h^{n+1})\right),\label{CHNS_Rt_2}\\
                    \left(\widetilde{\sigma}_h^{n+1}\delta_\tau(\widetilde{\sigma}\bm{u})_h^{n+1},\bm{v}_h\right)&+\zeta\left(\nabla\cdot\delta_\tau\bm{u}_h^{n+1},\nabla\cdot\bm{v}_h\right)+\frac{2}{\mathrm{Re}}\left(\widetilde{\eta}_h^{n+1}\mathbb{D}(\bm{u}_h^{n+1}),\mathbb{D}(\bm{v}_h)\right)\notag\\
                    =&-\frac{\xi_2^{n+1}}{2}\left[\left((\widetilde{\rho}_h^{n+1}\widetilde{\bm{u}}_h^{n+1}+\widetilde{\bm{J}}_h^{n+1})\cdot\nabla\widetilde{\bm{u}}_h^{n+1},\bm{v}_h\right)-\left((\widetilde{\rho}_h^{n+1}\widetilde{\bm{u}}_h^{n+1}+\widetilde{\bm{J}}_h^{n+1})\cdot\nabla\bm{v},\widetilde{\bm{u}}_h^{n+1}\right)\right]\notag\\
                    &+(P_h^\sharp,\nabla\cdot\bm{v}_h)-\frac{\xi_1^{n+1}}{\mathrm{We}\mathrm{Cn}}(\widetilde{\phi}_h^{n+1}\nabla\widetilde{\mu}_h^{n+1},\bm{v}_h)+\frac{1}{\mathrm{Fr}}(\widetilde{\bm{f}}_h^{n+1},\bm{v}_h),\label{CHNS_rhout_2}\\
                    (\mathscr{D}P_h^{n+1},q_h)=&-\frac{\gamma_0\varrho}{\tau}(\nabla\cdot\bm{u}_h^{n+1},q_h),\label{MCHNS_rhot_2}\\
                    \delta_\tau{Q}^{n+1}=&-\frac{Q^{n+1}}{T}+\frac{1}{2}e^{t^{n+1}/T}\left[\left((\widetilde{\rho}_h^{n+1}\widetilde{\bm{u}}_h^{n+1}+\widetilde{\bm{J}}_h^{n+1})\cdot\nabla\widetilde{\bm{u}}_h^{n+1},\bm{u}_h^{n+1}\right)\right.\notag\\
                    &\left.-\left((\widetilde{\rho}_h^{n+1}\widetilde{\bm{u}}_h^{n+1}+\widetilde{\bm{J}}_h^{n+1})\cdot\nabla\bm{u}_h^{n+1},\widetilde{\bm{u}}_h^{n+1}\right)\right],\label{CHNS_Qt_2}
                \end{align}
            \end{subequations}
            where $\varrho=\frac{\min(\rho_1,\rho_2)}{\rho_r},\zeta\ge{3}\varrho$ and
            \begin{equation}
                P_h^\sharp=
                \begin{cases}
                    2P_h^n-P_h^{n-1}  ,                        & J=1,\\
                    \frac{1}{3}(7P_h^n-5P_h^{n-1}+P_h^{n-2}) , & J=2.
                \end{cases}
            \end{equation}
            \begin{remark}
                \label{reason_PS}
                The key ingredient for achieving energy stability in a fully decoupled scheme lies in the ability of discrete momentum terms to absorb additional terms generated by the pressure update equation. If the pressure stabilization method is employed to decouple the Navier-Stokes equations, however, it is discovered that, if a direct BDF-type discretization is applied to the term $\sigma(\sigma\bm{u})_t$, a theoretical justification of the energy stability analysis is not available any more. Hence, we utilize the artificial compressibility method instead, which also leads to a time-independent pressure update equation. Although the pressure stabilization method has been widely used to develop fully decoupled schemes in the existing literature, all of them are only first-order accurate in time \cite{2021_chen, 2022_chen, 2016_chen, 2015_shen}.
                We believe that, with the same discretization for the term $\sigma(\sigma\bm{u})_t$ in \cite{2011_guermond}, a second-order in time scheme can also be constructed with provable suboptimal energy stability.
            \end{remark}
            \begin{remark} 
                A direct discretization for the pressure update equation would be
                \begin{equation}
                    (\delta_\tau{P}_h^{n+1},q_h)=-\frac{\gamma_0\varrho}{\tau^2}(\nabla\cdot\bm{u}_h^{n+1},q_h),
                \end{equation}
                where $\varepsilon=\frac{\tau^2}{\gamma_0}$ is set, and a penalty formulation is adopted for $\rho$ as in \cite{2009_guermond}. However, this formulation will make the energy stability analysis much more complicated if $J=2$. Hence, we still use the first order BDF to discretize it while $\gamma_0$ remains the same, and introduce the term $\zeta\nabla\nabla\cdot\bm{u}_t$ to ensure an energy stability. In turn, this approach gives
                \begin{equation}
                    \frac{1}{\tau}(P_h^{n+1}-P_h^n,q_h)=-\frac{\gamma_0\varrho}{\tau^2}(\nabla\cdot\bm{u}_h^{n+1},q_h),
                \end{equation}
                which is equivalent to \eqref{MCHNS_rhot_2}. 
            \end{remark}

            In comparison with many other decoupling methods that involve a pressure Poisson equation, the proposed scheme \eqref{MCHNSt_2} is computationally far less expensive, since only a straightforward step of time marching is required to update the pressure. In addition, this approach eliminates the necessity of an artificial Neumann boundary condition, thereby avoiding the numerical boundary layer and non-physical numerical oscillation near the boundary \cite{2006_guermond}. On the other hand, the artificial compression method has the drawback of introducing non-physical acoustic waves, which will be discussed in section \ref{acoustic_waves}. The proposed scheme \eqref{MCHNSt_2} is also globally mass conservative and well-posed, and its numerical implementation process is similar to that of \eqref{MCHNSt_1}. Henceforth, we only provide its energy stability proof, for the sake of brevity.

            \begin{theorem}
                \label{energy_law1}
                In the absence of $\widetilde{f}_h^{n+1},$ for $J=1,$ the scheme \eqref{MCHNSt_2} satisfies the following modified discrete energy law:
                \begin{equation}
                    \mathscr{E}^N+\tau\sum_{k=1}^N\mathscr{P}^k\le{C} , \quad {N}\ge{1},
                \end{equation}
                where $C$ is a generic constant dependent on the initial data and dimensionless numbers, and
                \begin{equation}
                    \label{discrete_energy_2}
                    \begin{aligned}
                        \mathscr{E}^n=&\|(\widetilde{\sigma}\bm{u})_h^n\|^2+\frac{\mathrm{Cn}}{\mathrm{We}}\|\nabla\phi_h^n\|^2+\frac{s}{\mathrm{We}\mathrm{Cn}}\|\phi_h^n\|^2+\frac{2}{\mathrm{We}\mathrm{Cn}}|R^n|^2+|Q^n|^2+\zeta\|\nabla\cdot\bm{u}_h^n\|^2+\frac{\tau^2}{\varrho}\|P_h^n\|^2,\\
                        \mathscr{P}^n=&\frac{4}{\mathrm{Re}}\|(\eta_h^{n-1})^\frac{1}{2}\mathbb{D}(\bm{u}_h^n)\|^2+\frac{2}{T}|{Q}^n|^2+\frac{2}{\mathrm{Pe}\mathrm{We}\mathrm{Cn}}\|\nabla\mu_h^n\|^2.
                    \end{aligned}
                \end{equation}
            \end{theorem}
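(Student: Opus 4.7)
The plan is to replicate the energy-identity strategy of Theorem~\ref{discrete_energy_estimate_1}, with extra bookkeeping to accommodate the artificial compressibility term, the lagged pressure $P_h^\sharp$, and the first-order pressure update \eqref{MCHNS_rhot_2}. For $J=1$ (so $\gamma_0=1$), I would take the same test functions as in the Galerkin proof: $\psi_h=\frac{2\tau}{\mathrm{We}\mathrm{Cn}}\delta_\tau\phi_h^{n+1}$ in \eqref{CHNS_mut_2}, $\omega_h=\frac{2\tau}{\mathrm{We}\mathrm{Cn}}\mu_h^{n+1}$ in \eqref{CHNS_phit_2}, the multiplier $\frac{4\tau R^{n+1}}{\mathrm{We}\mathrm{Cn}}$ on \eqref{CHNS_Rt_2}, $\bm{v}_h=2\tau\bm{u}_h^{n+1}$ in \eqref{CHNS_rhout_2}, and the multiplier $2\tau Q^{n+1}$ on \eqref{CHNS_Qt_2}. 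Exactly as in the standard Galerkin proof, the ZEC design of \eqref{CHNS_Rt_2} and \eqref{CHNS_Qt_2} makes every $\xi_1^{n+1}$- and $\xi_2^{n+1}$-weighted cross term cancel pairwise, while the identity $2\tau\delta_\tau\chi^{n+1}\,\chi^{n+1}=|\chi^{n+1}|^2-|\chi^n|^2+|\mathscr{D}\chi^{n+1}|^2$ produces telescoping contributions for $\|(\widetilde\sigma\bm{u})_h\|^2$, $\|\nabla\phi_h\|^2$, $\|\phi_h\|^2$, $|R|^2$ and $|Q|^2$, together with the three dissipation entries in \eqref{discrete_energy_2}. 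Applied to the $\zeta$-stabilization, the same identity also yields the clean telescoping $\zeta\bigl(\|\nabla\cdot\bm{u}_h^{n+1}\|^2-\|\nabla\cdot\bm{u}_h^n\|^2+\|\nabla\cdot\mathscr{D}\bm{u}_h^{n+1}\|^2\bigr)$.

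The genuinely new step is the handling of the pressure. The choice $\bm{v}_h=2\tau\bm{u}_h^{n+1}$ leaves an explicit term $2\tau(P_h^\sharp,\nabla\cdot\bm{u}_h^{n+1})$ that the ZEC mechanism cannot absorb. To control it, I would test \eqref{MCHNS_rhot_2} with $q_h=\frac{2\tau^2}{\varrho}P_h^{n+1}$, producing $\frac{\tau^2}{\varrho}\bigl(\|P_h^{n+1}\|^2-\|P_h^n\|^2+\|\mathscr{D}P_h^{n+1}\|^2\bigr)+2\tau(\nabla\cdot\bm{u}_h^{n+1},P_h^{n+1})=0$. Adding this to the momentum identity cancels the ``correct pressure'' contribution and leaves only the second-difference defect $2\tau(P_h^{n+1}-P_h^\sharp,\nabla\cdot\bm{u}_h^{n+1})$. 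For $J=1$ the identity $P_h^{n+1}-P_h^\sharp=\mathscr{D}P_h^{n+1}-\mathscr{D}P_h^n$ is exact, and invoking \eqref{MCHNS_rhot_2} at the two consecutive levels (through the $L^2$ projection onto $N_h$, which is the identity whenever $\nabla\cdot\bm{u}_h\in N_h$) rewrites the defect as $-2\varrho(\nabla\cdot\bm{u}_h^{n+1},\nabla\cdot\mathscr{D}\bm{u}_h^{n+1})$, and the square-expansion identity converts it into $-\varrho\bigl(\|\nabla\cdot\bm{u}_h^{n+1}\|^2-\|\nabla\cdot\bm{u}_h^n\|^2+\|\nabla\cdot\mathscr{D}\bm{u}_h^{n+1}\|^2\bigr)$. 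A final application of the pressure update also rewrites $\frac{\tau^2}{\varrho}\|\mathscr{D}P_h^{n+1}\|^2$ as $\varrho\|\nabla\cdot\bm{u}_h^{n+1}\|^2$.

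Combining these three pieces, the coefficient of $\|\nabla\cdot\bm{u}_h^{n+1}\|^2$ in the resulting one-step identity becomes $\zeta$, the coefficient of $\|\nabla\cdot\bm{u}_h^n\|^2$ becomes $-(\zeta-\varrho)$, and the residual $(\zeta-\varrho)\|\nabla\cdot\mathscr{D}\bm{u}_h^{n+1}\|^2$ is nonnegative under the structural hypothesis $\zeta\ge 3\varrho$. Summing from $n=0$ to $N-1$ then telescopes to $\zeta\|\nabla\cdot\bm{u}_h^N\|^2+\varrho\sum_{k=1}^{N-1}\|\nabla\cdot\bm{u}_h^k\|^2-(\zeta-\varrho)\|\nabla\cdot\bm{u}_h^0\|^2$, which together with the telescoped CH/SAV/$Q$-kinetic contributions and the now-clean telescoping $\frac{\tau^2}{\varrho}(\|P_h^N\|^2-\|P_h^0\|^2)$ reproduces $\mathscr{E}^N-\mathscr{E}^0$. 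Discarding on the left the nonnegative residuals $\|\mathscr{D}(\widetilde\sigma\bm{u})_h^{n+1}\|^2$, $\|\nabla\mathscr{D}\phi_h^{n+1}\|^2$, $\|\mathscr{D}\phi_h^{n+1}\|^2$, $|\mathscr{D}R^{n+1}|^2$, $|\mathscr{D}Q^{n+1}|^2$ and $(\zeta-\varrho)\|\nabla\cdot\mathscr{D}\bm{u}_h^{n+1}\|^2$ yields the desired bound $\mathscr{E}^N+\tau\sum_{k=1}^N\mathscr{P}^k\le C$ with $C=\mathscr{E}^0+(\zeta-\varrho)\|\nabla\cdot\bm{u}_h^0\|^2$. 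The main obstacle is precisely the pressure-defect rewrite: it is the step that forces both the particular extrapolation $P_h^\sharp=2P_h^n-P_h^{n-1}$ and the structural lower bound $\zeta\ge 3\varrho$ into the scheme, and it requires the $L^2$-projection subtlety implicit in \eqref{MCHNS_rhot_2} to be handled with care.
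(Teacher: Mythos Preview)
Your overall strategy---same test functions, same ZEC cancellations, same telescoping identity for $\zeta(\nabla\cdot\delta_\tau\bm u_h^{n+1},2\tau\nabla\cdot\bm u_h^{n+1})$---coincides with the paper's, and your idea of adding \eqref{MCHNS_rhot_2} tested against $P_h^{n+1}$ is also what the paper does. The divergence is in how the residual pressure piece is closed.

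You try to push the defect $2\tau(P_h^{n+1}-P_h^\sharp,\nabla\cdot\bm u_h^{n+1})$ back into divergence variables via $\mathscr D^2P_h^{n+1}=-\tfrac{\varrho}{\tau}\nabla\cdot\mathscr D\bm u_h^{n+1}$ and $\tfrac{\tau^2}{\varrho}\|\mathscr D P_h^{n+1}\|^2=\varrho\|\nabla\cdot\bm u_h^{n+1}\|^2$. Both rewrites are \emph{equalities only when} $\nabla\cdot\bm u_h\in N_h$; for the MINI or Taylor--Hood pair used here this fails, and you flag this but never close it. The paper sidesteps the issue entirely: since $P_h^\sharp=P_h^{n+1}-\mathscr D^2P_h^{n+1}\in N_h$, it uses \eqref{MCHNS_rhot_2} with $q_h=P_h^\sharp$ to get $-(P_h^\sharp,2\tau\nabla\cdot\bm u_h^{n+1})=\tfrac{2\tau^2}{\varrho}(\mathscr D P_h^{n+1},P_h^{n+1}-\mathscr D^2P_h^{n+1})$, expands in pressure norms, and then invokes only the one-sided bound $\tfrac{\tau^2}{\varrho}\|\mathscr D^2P_h^{n+1}\|^2\le\varrho\|\nabla\cdot\mathscr D\bm u_h^{n+1}\|^2\le\zeta\|\nabla\cdot\mathscr D\bm u_h^{n+1}\|^2$ (this is just $\|\Pi_{N_h}v\|\le\|v\|$). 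That single inequality is enough to let the $\zeta$-stabilization absorb the bad sign, and it is valid for any $\bm X_h\times N_h$. (Incidentally, for $J=1$ only $\zeta\ge\varrho$ is actually used; the $\zeta\ge3\varrho$ you cite is needed only in the $J=2$ proof.)

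You also sum from $n=0$ without noticing that the very first step cannot use $P_h^\sharp=2P_h^0-P_h^{-1}$: the scheme starts with $P_h^\sharp=P_h^0=P_h^1-\mathscr D P_h^1$, and the paper treats this step separately before telescoping. This is a small but necessary bookkeeping point you are missing.
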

            \begin{proof}
                At any time step, taking $(\psi_h,\omega_h)=(2\tau\delta_\tau\phi_h^{n+1},2\tau\mu_h^{n+1})$ in \eqref{CHNS_phit_2}-\eqref{CHNS_mut_2} and multiplying \eqref{CHNS_Rt_2} by $4\tau{R}^{n+1}$ leads to
                \begin{equation}
                    \label{discrete_energy_phase_2_1}
                    \begin{aligned}
                        (\delta_\tau\phi_h^{n+1},2\tau\mu_h^{n+1})=&\xi_1^{n+1}(\phi_h^n\bm{u}_h^n,2\tau\nabla\mu_h^{n+1})-\frac{2\tau}{\mathrm{Pe}}\|\nabla\mu_h^{n+1}\|^2,\\
                        (\mu_h^{n+1},2\tau\delta_\tau\phi_h^{n+1})=&\mathrm{Cn}^2(\nabla\phi_h^{n+1},2\tau\delta_\tau\nabla\phi_h^{n+1})+s(\phi_h^{n+1},2\tau\delta_\tau\phi_h^{n+1})+\xi_1^{n+1}(G'(\phi_h^n),2\tau\delta_\tau\phi_h^{n+1}),\\
                        \delta_\tau{R}^{n+1}4\tau{R}^{n+1}=&\xi_1^{n+1}\left((G'(\phi_h^n),2\tau\delta_\tau\phi_h^{n+1})-(\phi_h^n\bm{u}_h^n,2\tau\nabla\mu_h^{n+1})+(\phi_h^n\nabla\mu_h^n,2\tau\bm{u}_h^{n+1})\right).
                    \end{aligned}
                \end{equation}
                Taking $\bm{v}_h=2\tau\bm{u}_h^{n+1}$ in \eqref{CHNS_rhout_2} and multiplying  \eqref{CHNS_Qt_2} by $2\tau{Q}^{n+1}$, we get
                \begin{equation}
                    \label{discrete_energy_flow_2_1}
                    \begin{aligned}
                        \left(\delta_\tau(\widetilde{\sigma}\bm{u})_h^{n+1},2\tau(\widetilde{\sigma}\bm{u})_h^{n+1}\right)&+\zeta\left(\nabla\cdot\delta_\tau\bm{u}_h^{n+1},2\tau\nabla\cdot\bm{u}_h^{n+1}\right)+\frac{4\tau}{\mathrm{Re}}\|(\eta_h^n)^\frac{1}{2}\mathbb{D}(\bm{u}_h^{n+1})\|^2\\
                        =&-\frac{\xi_2^{n+1}}{2}\left[\left((\rho_h^n\bm{u}_h^n+\bm{J}_h^n)\cdot\nabla\bm{u}_h^n,2\tau\bm{u}_h^{n+1}\right)-\left(2\tau(\rho_h^n\bm{u}_h^n+\bm{J}_h^n)\cdot\nabla\bm{u}_h^{n+1},\bm{u}_h^n\right)\right]\\
                        &+(P_h^\sharp,2\tau\nabla\cdot\bm{u}_h^{n+1})-\frac{\xi_1^{n+1}}{\mathrm{We}\mathrm{Cn}}(\phi_h^n\nabla\mu_h^n,2\tau\bm{u}_h^{n+1}) , \\
                        \delta_\tau{Q}^{n+1}2\tau{Q}^{n+1}+\frac{2\tau}{T}|{Q}^{n+1}|^2=&\frac{\xi_2^{n+1}}{2}\left[\left((\rho_h^n\bm{u}_h^n+\bm{J}_h^n)\cdot\nabla\bm{u}_h^n,2\tau\bm{u}_h^{n+1}\right)-\left(2\tau(\rho_h^n\bm{u}_h^n+\bm{J}_h^n)\cdot\nabla\bm{u}_h^{n+1},\bm{u}_h^n\right)\right].
                    \end{aligned}
                \end{equation}
                A summation of equations \eqref{discrete_energy_phase_2_1} and \eqref{discrete_energy_flow_2_1} yields
                \begin{equation}
                    \label{discrete_energy_mid_2_1}
                    \begin{aligned}
                        \frac{s}{\mathrm{We}\mathrm{Cn}}(\phi_h^{n+1},2\tau\delta_\tau\phi_h^{n+1})&+\frac{\mathrm{Cn}}{\mathrm{We}}(\nabla\phi_h^{n+1},2\tau\delta_\tau\nabla\phi_h^{n+1})+\frac{1}{\mathrm{We}\mathrm{Cn}}\delta_\tau{R}^{n+1}4\tau{R}^{n+1}+\left(\delta_\tau(\widetilde{\sigma}\bm{u})_h^{n+1},2\tau(\widetilde{\sigma}\bm{u})_h^{n+1}\right)\\
                        &+\zeta\left(\nabla\cdot\delta_\tau\bm{u}_h^{n+1},2\tau\nabla\cdot\bm{u}_h^{n+1}\right)+\delta_\tau{Q}^{n+1}2\tau{Q}^{n+1}-(P_h^\sharp,2\tau\nabla\cdot\bm{u}_h^{n+1})\\
                        =&-\frac{4\tau}{\mathrm{Re}}\|(\eta_h^n)^\frac{1}{2}\mathbb{D}(\bm{u}_h^{n+1})\|^2-\frac{2\tau}{T}|{Q}^{n+1}|^2-\frac{2\tau}{\mathrm{Pe}\mathrm{We}\mathrm{Cn}}\|\nabla\mu_h^{n+1}\|^2.
                    \end{aligned}
                \end{equation}
                Notice that $P_h^\sharp=2P_h^n-P_h^{n-1}=P_h^{n+1}-\mathscr{D}^2P_h^{n+1}$. By \eqref{MCHNS_rhot_2}, we see that
                \begin{equation}
                    -(P_h^\sharp,2\tau\nabla\cdot\bm{u}_h^{n+1})=\frac{2\tau^2}{\varrho}(\mathscr{D}P_h^{n+1},P_h^{n+1}-\mathscr{D}^2P_h^{n+1}).
                \end{equation}
                Meanwhile, the following identity is a direct consequence of \eqref{MCHNS_rhot_2}:
                \begin{equation}
                    (\mathscr{D}^2P_h^{n+1},q_h)=-\frac{\varrho}{\tau}(\nabla\cdot\mathscr{D}\bm{u}_h^{n+1},q_h) , \quad \forall{q}_h\in{N}_h,
                \end{equation}
                which in turn implies that
                \begin{equation}
                    \label{control_nableP_2}
                    \frac{\tau^2}{\varrho}\|\mathscr{D}^2P_h^{n+1}\|^2\le\zeta\|\nabla\cdot\mathscr{D}\bm{u}_h^{n+1}\|^2.
                \end{equation}

                We recall the following identity
                \begin{equation}
                    (\delta_\tau\chi^{n+1})2\tau\chi^{n+1}=|\chi^{n+1}|^2-|\chi^n|^2+|\mathscr{D}\chi^{n+1}|^2 .
                \end{equation}
                Its substitution into \eqref{discrete_energy_mid_2_1} gives (for simplicity we drop some unnecessary $|\mathscr{D}\chi^{n+1}|^2$ terms on the left-hand side)
                \begin{equation}
                    \begin{aligned}
                        \|(\widetilde{\sigma}&\bm{u})_h^{n+1}\|^2-\|(\widetilde{\sigma}\bm{u})_h^n\|^2+\frac{\mathrm{Cn}}{\mathrm{We}}\left(\|\nabla\phi_h^{n+1}\|^2-\|\nabla\phi_h^n\|^2\right)+\frac{s}{\mathrm{We}\mathrm{Cn}}\left(\|\phi_h^{n+1}\|^2-\|\phi_h^n\|^2\right)\\
                        &+\frac{2}{\mathrm{We}\mathrm{Cn}}\left(|R^{n+1}|^2-|R^n|^2\right)+|Q^{n+1}|^2-|Q^n|^2+\zeta\left(\|\nabla\cdot\bm{u}_h^{n+1}\|^2-\|\nabla\cdot\bm{u}_h^n\|^2+\|\nabla\cdot\mathscr{D}\bm{u}_h^{n+1}\|^2\right)\\
                        &+\frac{\tau^2}{\varrho}\left(\|P_h^{n+1}\|^2-\|P_h^n\|^2+\|\mathscr{D}{P}_h^{n+1}\|^2-\|\mathscr{D}{P}_h^{n+1}\|^2+\|\mathscr{D}{P}_h^n\|^2-\|\mathscr{D}^2{P}_h^{n+1}\|^2\right)\\
                        \le&-\frac{4\tau}{\mathrm{Re}}\|(\eta_h^n)^\frac{1}{2}\mathbb{D}(\bm{u}_h^{n+1})\|^2-\frac{2\tau}{T}|{Q}^{n+1}|^2-\frac{2\tau}{\mathrm{Pe}\mathrm{We}\mathrm{Cn}}\|\nabla\mu_h^{n+1}\|^2.
                    \end{aligned}
                \end{equation}
                Subsequently, its combination with \eqref{control_nableP_2} results in
                \begin{equation}
                    \begin{aligned}
                        \|(\widetilde{\sigma}&\bm{u})_h^{n+1}\|^2-\|(\widetilde{\sigma}\bm{u})_h^n\|^2+\frac{\mathrm{Cn}}{\mathrm{We}}\left(\|\nabla\phi_h^{n+1}\|^2-\|\nabla\phi_h^n\|^2\right)+\frac{s}{\mathrm{We}\mathrm{Cn}}\left(\|\phi_h^{n+1}\|^2-\|\phi_h^n\|^2\right)+|Q^{n+1}|^2-|Q^n|^2\\
                        &+\frac{2}{\mathrm{We}\mathrm{Cn}}\left(|R^{n+1}|^2-|R^n|^2\right)+\zeta\left(\|\nabla\cdot\bm{u}_h^{n+1}\|^2-\|\nabla\cdot\bm{u}_h^n\|^2\right)+\frac{\tau^2}{\varrho}\left(\|P_h^{n+1}\|^2-\|P_h^n\|^2\right)\\
                        \le&-\frac{4\tau}{\mathrm{Re}}\|(\eta_h^n)^\frac{1}{2}\mathbb{D}(\bm{u}_h^{n+1})\|^2-\frac{2\tau}{T}|{Q}^{n+1}|^2-\frac{2\tau}{\mathrm{Pe}\mathrm{We}\mathrm{Cn}}\|\nabla\mu_h^{n+1}\|^2.
                    \end{aligned}
                \end{equation}

                At the initial time step, we see that $P_h^\sharp=P_h^0=P_h^1-\mathscr{D}P_h^1$, so that \eqref{discrete_energy_mid_2_1} becomes
                \begin{equation}
                    \begin{aligned}
                        \|(\widetilde{\sigma}&\bm{u})_h^1\|^2+\frac{\mathrm{Cn}}{\mathrm{We}}\|\nabla\phi_h^1\|^2+\frac{s}{\mathrm{We}\mathrm{Cn}}\|\phi_h^1\|^2+\frac{2}{\mathrm{We}\mathrm{Cn}}|R^1|^2+|Q^1|^2+2\zeta\|\nabla\cdot\bm{u}_h^1\|^2\\
                        &+\frac{4\tau}{\mathrm{Re}}\|(\eta_h^0)^\frac{1}{2}\mathbb{D}(\bm{u}_h^1)\|^2+\frac{2\tau}{T}|{Q}^1|^2+\frac{2\tau}{\mathrm{Pe}\mathrm{We}\mathrm{Cn}}\|\nabla\mu_h^1\|^2+\frac{\tau^2}{\varrho}\left(\|P_h^1\|^2-\|\mathscr{D}{P}_h^1\|^2\right)\\
                        \le&\|(\sigma\bm{u})_h^0\|^2+\frac{\mathrm{Cn}}{\mathrm{We}}\|\nabla\phi_h^0\|^2+\frac{s}{\mathrm{We}\mathrm{Cn}}\|\phi_h^0\|^2+\frac{2}{\mathrm{We}\mathrm{Cn}}|R^0|^2+|Q^0|^2+\frac{\tau^2}{\varrho}\|P_h^0\|^2.
                    \end{aligned}
                \end{equation}
                Meanwhile, the following inequality comes from \eqref{MCHNS_rhot_2}:
                \begin{equation}
                    \label{control_nableP_1}
                    \frac{\tau^2}{\varrho}\|\mathscr{D}P_h^1\|^2\le\zeta\|\nabla\cdot\bm{u}_h^1\|^2 .
                \end{equation}
                Consequently, the desired result could be obtained by a summation from $n=0,1,\cdots,N-1$.
            \end{proof}

            \begin{lemma}[Three term recursion inequality \cite{2011_guermond}]
                \label{ttri}
                Let $\{x^n\}_{n\ge{0}}$ satisfy the three term recursion inequality
                \begin{equation}
                    3x^{n+1}-4x^n+x^{n-1}\le{g}^{n+1} , \quad {n}\ge{1},
                \end{equation}
                with initial data $x^0$ and $x^1.$ Then there are constants $c_0$ and $c_1$ that depend only on $x^0$ and $x^1$ such that for any $N\ge{2}$,
                \begin{equation}
                    x^N\le{c}_0+\frac{c_1}{3^N}+\sum_{j=2}^N\frac{1}{3^{N+1-j}}\sum_{k=2}^jg^k.
                \end{equation}
            \end{lemma}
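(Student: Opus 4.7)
The plan is to reduce the three-term BDF2-style recursion to a first-order scalar inequality by telescoping, then iterate the resulting first-order inequality. Factoring the characteristic polynomial $3r^2-4r+1=(3r-1)(r-1)$ motivates this decomposition: the root $r=1$ supplies the telescoping, while the root $r=1/3$ controls the geometric decay of past data that ultimately produces the $3^{-N}$ weight.

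First, I would rewrite $3x^{n+1}-4x^n+x^{n-1}=3(x^{n+1}-x^n)-(x^n-x^{n-1})$ and sum the hypothesized inequality from $n=1$ to $n=j-1$, for any $j\ge 2$. The left-hand side telescopes to $3(x^j-x^1)-(x^{j-1}-x^0)$ and the right-hand side collapses to $\sum_{k=2}^{j}g^k$. Rearranging yields the first-order inequality
\begin{equation*}
    x^j \le \frac{1}{3}x^{j-1} + \alpha + \frac{1}{3}\sum_{k=2}^{j} g^k , \qquad j\ge 2,
\end{equation*}
where $\alpha := x^1 - x^0/3$ depends only on $x^0$ and $x^1$. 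This converts the second-order recursion into a contraction with factor $1/3$ forced by the shifted data $\alpha$ and the partial sum of $g$.

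Second, I would iterate the first-order inequality $N-1$ times, descending from $x^N$ to $x^1$. A straightforward induction yields
\begin{equation*}
    x^N \le \frac{x^1}{3^{N-1}} + \alpha\sum_{i=0}^{N-2}\frac{1}{3^i} + \sum_{i=0}^{N-2}\frac{1}{3^{i+1}}\sum_{k=2}^{N-i} g^k.
\end{equation*}
The change of index $j=N-i$ in the last double sum recasts it exactly as $\sum_{j=2}^{N}3^{-(N+1-j)}\sum_{k=2}^{j}g^k$, which is the target form. Evaluating the geometric sum $\sum_{i=0}^{N-2}3^{-i}=\tfrac{3}{2}\bigl(1-3^{-(N-1)}\bigr)$ and grouping the pieces that are constant versus those proportional to $3^{-N}$, one reads off
\begin{equation*}
    c_0 = \frac{3\alpha}{2} = \frac{3x^1-x^0}{2}, \qquad c_1 = 3x^1 - \frac{9\alpha}{2} = \frac{3(x^0-x^1)}{2},
\end{equation*}
both depending only on $x^0$ and $x^1$, which completes the proof.

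The argument has no hard analytic step: the only place where care is required is the re-indexing of the double sum, since the natural output of the iteration indexes the weights by $3^{-(i+1)}$ with $i$ measuring distance from the current step $N$, whereas the stated form indexes them by $3^{-(N+1-j)}$ with $j$ measuring absolute time. Verifying these two expressions agree—either by the substitution $j=N-i$ as above, or by checking that each $g^k$ receives the same total coefficient $\tfrac{1}{2}\bigl(1-3^{-(N-k+1)}\bigr)$ in both forms—is the main bookkeeping obstacle, but it is routine.
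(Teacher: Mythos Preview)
Your proof is correct: the telescoping of $3(x^{n+1}-x^n)-(x^n-x^{n-1})$ followed by iteration of the resulting first-order contraction with rate $1/3$ is exactly the right idea, and the identification of $c_0=\tfrac{3x^1-x^0}{2}$, $c_1=\tfrac{3(x^0-x^1)}{2}$ is accurate. Note that the paper does not supply its own proof of this lemma; it is quoted from \cite{2011_guermond} and invoked as a black box in the proof of Theorem~\ref{energy_law2}, so there is nothing in the paper to compare your argument against.
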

            \begin{theorem}
                \label{energy_law2}
                In the absence of $\widetilde{f}_h^{n+1},$  for $J=2,$ the numerical scheme \eqref{MCHNSt_2} satisfies the following modified discrete energy law:
                \begin{equation}
                    \mathscr{E}^N+\tau\sum_{k=2}^N\mathscr{P}^k\le{C} , \quad {N}\ge{2},
                \end{equation}
                where $C$ is a generic constant dependent on the initial data and dimensionless numbers, and
                \begin{equation}
                    \begin{aligned}
                        \mathscr{E}^n=&\frac{1}{2}\|(\widetilde{\sigma}\bm{u})_h^n\|^2+\frac{\mathrm{Cn}}{2\mathrm{We}}\|\nabla\phi_h^n\|^2+\frac{s}{2\mathrm{We}\mathrm{Cn}}\|\phi_h^n\|^2+\frac{1}{\mathrm{We}\mathrm{Cn}}|R^n|^2+\frac{1}{2}|Q^n|^2+\frac{\zeta}{2}\|\nabla\cdot\bm{u}_h^n\|^2+\frac{2\tau^2}{9\varrho}\|P_h^n\|^2,\\
                        \mathscr{P}^n=&\frac{4}{3\mathrm{Re}}\|(\widetilde{\eta}_h^n)^\frac{1}{2}\mathbb{D}(\bm{u}_h^n)\|^2+\frac{2}{3T}|{Q}^n|^2+\frac{2}{3\mathrm{Pe}\mathrm{We}\mathrm{Cn}}\|\nabla\mu_h^n\|^2+\frac{2\tau}{9\varrho}\|\mathscr{D}P_h^{n-1}\|^2.
                    \end{aligned}
                \end{equation}
            \end{theorem}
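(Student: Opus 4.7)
The plan is to mirror the structure of the proof of Theorem \ref{energy_law1}, but carefully adapt every step to the BDF2 setting so that the leftover terms assemble into a three-term recursion amenable to Lemma \ref{ttri}. First, I would take the test functions $(\psi_h,\omega_h)=(2\tau\delta_\tau\phi_h^{n+1},2\tau\mu_h^{n+1})$ in \eqref{CHNS_phit_2}--\eqref{CHNS_mut_2}, multiply \eqref{CHNS_Rt_2} by $4\tau R^{n+1}$, set $\bm{v}_h=2\tau\bm{u}_h^{n+1}$ in \eqref{CHNS_rhout_2}, and multiply \eqref{CHNS_Qt_2} by $2\tau Q^{n+1}$. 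Summing the resulting equalities, the nonlinear coupling terms and the $\xi_1^{n+1}$/$\xi_2^{n+1}$ contributions cancel exactly by the ZEC construction, just as in Theorem \ref{energy_law1}; what remains are the time-derivative inner products, the dissipation, and the pressure contribution $-(P_h^\sharp,2\tau\nabla\cdot\bm{u}_h^{n+1})$.

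Next, I would apply the BDF2 identity recalled in Theorem \ref{discrete_energy_estimate_1},
\begin{equation*}
(\delta_\tau\chi^{n+1})\,2\tau\chi^{n+1}=\tfrac{1}{2}\bigl(|\chi^{n+1}|^2-|\chi^n|^2+|\widetilde{\chi}^{n+2}|^2-|\widetilde{\chi}^{n+1}|^2+|\mathscr{D}^2\chi^{n+1}|^2\bigr),
\end{equation*}
to the time-difference inner products associated with $(\widetilde{\sigma}\bm{u})_h$, $\phi_h$, $\nabla\phi_h$, $R$ and $Q$, and the first-order identity $(\delta_\tau\chi)\,2\tau\chi=|\chi^{n+1}|^2-|\chi^n|^2+|\mathscr{D}\chi^{n+1}|^2$ to the artificial-compressibility term $\zeta(\nabla\cdot\delta_\tau\bm{u}_h^{n+1},2\tau\nabla\cdot\bm{u}_h^{n+1})$. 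This produces the majority of the quantities appearing in $\mathscr{E}^n$ and accounts for the dissipation appearing in $\mathscr{P}^n$.

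The main obstacle is the pressure term, because the BDF2 extrapolation $P_h^\sharp=\tfrac{1}{3}(7P_h^n-5P_h^{n-1}+P_h^{n-2})$ does not split as cleanly as in the $J=1$ case. The key algebraic identity I would exploit is
\begin{equation*}
P_h^{n+1}-P_h^\sharp=\mathscr{D}^3P_h^{n+1}+\tfrac{2}{3}\mathscr{D}^2P_h^n,
\end{equation*}
which, combined with the pressure update \eqref{MCHNS_rhot_2} tested against $P_h^\sharp\in N_h$, converts $-(P_h^\sharp,2\tau\nabla\cdot\bm{u}_h^{n+1})$ into a sum of terms of the form $\tfrac{4\tau^2}{3\varrho}(\mathscr{D}P_h^{n+1},P_h^{n+1})$, $\tfrac{4\tau^2}{3\varrho}(\mathscr{D}P_h^{n+1},\mathscr{D}^3P_h^{n+1})$ and $\tfrac{4\tau^2}{3\varrho}(\mathscr{D}P_h^{n+1},\mathscr{D}^2P_h^n)$. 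The first yields the telescoping $\|P_h^{n+1}\|^2-\|P_h^n\|^2+\|\mathscr{D}P_h^{n+1}\|^2$, while the latter two produce cross terms that only close up to a three-term pattern. In parallel, by taking $q_h=\mathscr{D}^2P_h^{n+1}$ in \eqref{MCHNS_rhot_2} (the BDF2 analogue of \eqref{control_nableP_2}), I obtain
\begin{equation*}
\tfrac{\tau^2}{\varrho}\|\mathscr{D}^2P_h^{n+1}\|^2\le\tfrac{3\zeta}{2}\|\nabla\cdot\mathscr{D}\bm{u}_h^{n+1}\|^2,
\end{equation*}
which lets the $|\mathscr{D}^2(\widetilde{\sigma}\bm{u})_h|$-type positive terms absorb the leftover pressure differences, provided $\zeta\ge 3\varrho$ as assumed.

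Finally, after absorbing all controllable remainders into the dissipation using Cauchy--Schwarz with weights tied to $\varrho$ and $\zeta$, the net inequality takes the form
\begin{equation*}
3\mathscr{E}^{n+1}-4\mathscr{E}^n+\mathscr{E}^{n-1}+\tau\mathscr{P}^{n+1}\le 0,
\end{equation*}
with $\mathscr{E}^n$ and $\mathscr{P}^n$ as stated. I would then invoke Lemma \ref{ttri} with $g^{n+1}=-\tau\mathscr{P}^{n+1}$ to conclude the bound $\mathscr{E}^N+\tau\sum_{k=2}^N\mathscr{P}^k\le C$, where $C$ depends only on $\mathscr{E}^0$, $\mathscr{E}^1$ and the dimensionless parameters. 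Bounds on $\mathscr{E}^0$ and $\mathscr{E}^1$ come from the initial data together with the $J=1$ energy law of Theorem \ref{energy_law1}, which provides the startup control for the BDF2 recursion.
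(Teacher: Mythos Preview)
Your outline follows the right philosophy, but several concrete steps do not go through as stated.

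First, the BDF2 identity you invoke from Theorem~\ref{discrete_energy_estimate_1} is the \emph{telescoping} form, producing $|\chi^{n+1}|^2+|\widetilde\chi^{n+2}|^2-|\chi^{n}|^2-|\widetilde\chi^{n+1}|^2$; it cannot by itself yield the $3-4+1$ pattern you later assert. The paper instead uses the equivalent expansion
\[
(\delta_\tau\chi^{n+1})\,2\tau\chi^{n+1}=\tfrac{1}{2}\bigl(3|\chi^{n+1}|^2-4|\chi^n|^2+|\chi^{n-1}|^2+2|\mathscr{D}\chi^{n+1}|^2-2|\mathscr{D}\chi^n|^2+|\mathscr{D}^2\chi^{n+1}|^2\bigr),
\]
which is what feeds Lemma~\ref{ttri}. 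Relatedly, the ``first-order identity'' you apply to $\zeta(\nabla\cdot\delta_\tau\bm u_h^{n+1},2\tau\nabla\cdot\bm u_h^{n+1})$ is wrong for $J=2$: $\delta_\tau$ here is the BDF2 operator, and the same $3-4+1$ expansion must be used for this term as well.

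Second, and more seriously, the pressure bookkeeping you propose does not close. The paper writes $P_h^\sharp=P_h^{n+1}-\mathscr{D}^2P_h^{n+1}+\tfrac13\mathscr{D}^2P_h^n$, and the crucial missing device is the exact identity (valid via \eqref{MCHNS_rhot_2})
\[
\varrho\|\nabla\cdot\mathscr{D}\bm u_h^{n+1}\|^2=\Bigl\|\varrho^{1/2}\nabla\cdot\mathscr{D}\bm u_h^{n+1}+\tfrac{2\tau}{3\varrho^{1/2}}\mathscr{D}^2P_h^{n+1}\Bigr\|^2+\tfrac{4\tau^2}{9\varrho}\|\mathscr{D}^2P_h^{n+1}\|^2,
\]
which converts part of $\zeta\|\nabla\cdot\mathscr D\bm u_h^{n+1}\|^2$ into a telescoping cross-norm. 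The residual pressure terms are then reorganised via $\|\mathscr{D}^3P_h^{n+1}\|^2$ and the control $\tfrac{4\tau^2}{9\varrho}\|\mathscr{D}^3P_h^{n+1}\|^2\le\zeta\|\nabla\cdot\mathscr{D}^2\bm u_h^{n+1}\|^2$ (absorbed by the $|\mathscr D^2\chi|^2$ leftover from the $\zeta$-term). Your proposed pairing $\mathscr{D}^2P$ against $\nabla\cdot\mathscr{D}\bm u$ does not match the available positive terms and leaves the cross term $(\mathscr D P_h^{n+1},\mathscr D^2P_h^n)$ uncontrolled.

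Consequently, the clean recursion $3\mathscr{E}^{n+1}-4\mathscr{E}^n+\mathscr{E}^{n-1}+\tau\mathscr{P}^{n+1}\le0$ you aim for is not what one actually obtains. The paper's inequality has the form $3x^{n+1}-4x^n+x^{n-1}\le -(b^{n+1}+d^{n+1}-d^n)$, where $x^n$ is $\mathscr{E}^n$ \emph{without} the pressure term, and $d^n$ collects $\|\mathscr{D}(\widetilde\sigma\bm u)_h^n\|^2$, the cross-norms above, and $\tfrac{2\tau^2}{3\varrho}\|P_h^n\|^2$. After Lemma~\ref{ttri} one obtains $x^N+\tfrac13\sum b^k+\tfrac13 d^N\le C$, and the pressure piece $\tfrac{2\tau^2}{9\varrho}\|P_h^N\|^2$ in $\mathscr{E}^N$ emerges precisely from $\tfrac13 d^N$. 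Finally, the bounds on the startup quantities $x^2,b^2,d^2$ require a separate calculation at $n=1$ (where $P_h^\sharp$ degenerates to $2P_h^1-P_h^0$), not merely an appeal to Theorem~\ref{energy_law1}.
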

            \begin{proof}
                At each time step, with the same treatment as in Theorem~\ref{energy_law1}, we obtain
                \begin{equation}
                    \label{discrete_energy_mid_2_2}
                    \begin{aligned}
                        \frac{s}{\mathrm{We}\mathrm{Cn}}(\phi_h^{n+1},2\tau\delta_\tau\phi_h^{n+1})&+\frac{\mathrm{Cn}}{\mathrm{We}}(\nabla\phi_h^{n+1},2\tau\delta_\tau\nabla\phi_h^{n+1})+\frac{1}{\mathrm{We}\mathrm{Cn}}\delta_\tau{R}^{n+1}4\tau{R}^{n+1}+\left(\delta_\tau(\widetilde{\sigma}\bm{u})_h^{n+1},2\tau(\widetilde{\sigma}\bm{u})_h^{n+1}\right)\\
                        &+\zeta\left(\nabla\cdot\delta_\tau\bm{u}_h^{n+1},2\tau\nabla\cdot\bm{u}_h^{n+1}\right)+\delta_\tau{Q}^{n+1}2\tau{Q}^{n+1}-(P_h^\sharp,2\tau\nabla\cdot\bm{u}_h^{n+1})\\
                        =&-\frac{4\tau}{\mathrm{Re}}\|(\widetilde{\eta}_h^{n+1})^\frac{1}{2}\mathbb{D}(\bm{u}_h^{n+1})\|^2-\frac{2\tau}{T}|{Q}^{n+1}|^2-\frac{2\tau}{\mathrm{Pe}\mathrm{We}\mathrm{Cn}}\|\nabla\mu_h^{n+1}\|^2.
                    \end{aligned}
                \end{equation}
                Notice that $P_h^\sharp=\frac{1}{3}(7P_h^n-5P_h^{n-1}+P_h^{n-2})=P_h^{n+1}-\mathscr{D}^2P_h^{n+1}+\frac{1}{3}\mathscr{D}^2P_h^n$. By \eqref{MCHNS_rhot_2}, we see that
                \begin{equation}
                    -(P_h^\sharp,2\tau\nabla\cdot\bm{u}_h^{n+1})=\frac{4\tau^2}{3\varrho}(\mathscr{D}P_h^{n+1},P_h^{n+1}-\mathscr{D}^2P_h^{n+1})
                    +\frac{4\tau^2}{9\varrho}(\mathscr{D}P_h^{n+1},\mathscr{D}^2P_h^n) .
                \end{equation}
                Meanwhile, the following inequality could be derived in a similar manner as in \eqref{control_nableP_2}:
                \begin{equation}
                    \label{control_nableP_3}
                    \frac{4\tau^2}{9\varrho}\|\mathscr{D}^3P_h^{n+1}\|^2\le\zeta\|\nabla\cdot\mathscr{D}^2\bm{u}_h^{n+1}\|^2.
                \end{equation}

                Because of the following identities
                \begin{equation}
                    (\delta_t\chi^{n+1})2\tau\chi^{n+1}=\frac{1}{2}\biggl(3|\chi^{n+1}|^2-4|\chi^n|^2+|\chi^{n-1}|^2+2|\mathscr{D}\chi^{n+1}|^2-2|\mathscr{D}\chi^n|^2+|\mathscr{D}^2\chi^{n+1}|^2\biggr),
                \end{equation}
                \begin{equation}
                    \varrho\|\nabla\cdot\mathscr{D}\bm{u}_h^{n+1}\|^2=\|\varrho^\frac{1}{2}\nabla\cdot\mathscr{D}\bm{u}_h^{n+1}+\frac{2\tau}{3\varrho^\frac{1}{2}}\mathscr{D}^2P_h^{n+1}\|^2+\frac{4\tau^2}{9\varrho}\|\mathscr{D}^2P_h^{n+1}\|^2,
                \end{equation}
                it is clear that their substitution into \eqref{discrete_energy_mid_2_2} leads to
                \begin{equation}
                    \label{intermediate}
                    \begin{aligned}
                        \frac{1}{2}&\left(3\|(\widetilde{\sigma}\bm{u})_h^{n+1}\|^2-4\|(\widetilde{\sigma}\bm{u})_h^n\|^2+\|(\widetilde{\sigma}\bm{u})_h^{n-1}\|^2+2\|\mathscr{D}(\widetilde{\sigma}\bm{u})_h^{n+1}\|^2-2\|\mathscr{D}(\widetilde{\sigma}\bm{u})_h^n\|^2\right)\\
                        &+\frac{\mathrm{Cn}}{2\mathrm{We}}\biggl(3\|\nabla\phi_h^{n+1}\|^2-4\|\nabla\phi_h^n\|^2+\|\nabla\phi_h^{n-1}\|^2+2\|\nabla\mathscr{D}\phi_h^{n+1}\|^2-2\|\nabla\mathscr{D}\phi_h^n\|^2\biggr)\\
                        &+\frac{s}{2\mathrm{We}\mathrm{Cn}}\biggl(3\|\phi_h^{n+1}\|^2-4\|\phi_h^n\|^2+\|\phi_h^{n-1}\|^2+2\|\mathscr{D}\phi_h^{n+1}\|^2-2\|\mathscr{D}\phi_h^n\|^2\biggr)\\
                        &+\frac{1}{\mathrm{We}\mathrm{Cn}}\biggl(3|R^{n+1}|^2-4|R^n|^2+|R^{n-1}|^2+2|\mathscr{D}R^{n+1}|^2-2|\mathscr{D}R^n|^2\biggr)\\
                        &+\frac{1}{2}\left(3|Q^{n+1}|^2-4|Q^n|^2+|Q^{n-1}|^2+2|\mathscr{D}Q^{n+1}|^2-2|\mathscr{D}Q^n|^2\right)\\
                        &+\frac{\zeta}{2}\left(3\|\nabla\cdot\bm{u}_h^{n+1}\|^2-4\|\nabla\cdot\bm{u}_h^n\|^2+\|\nabla\cdot\bm{u}_h^{n-1}\|^2+\|\nabla\cdot\mathscr{D}^2\bm{u}_h^{n+1}\|^2\right)\\
                        &+\|(\zeta-\varrho)^\frac{1}{2}\nabla\cdot\mathscr{D}\bm{u}_h^{n+1}\|^2-\|(\zeta-\varrho)^\frac{1}{2}\nabla\cdot\mathscr{D}\bm{u}_h^n\|^2\\
                        &+\|\varrho^\frac{1}{2}\nabla\cdot\mathscr{D}\bm{u}_h^{n+1}+\frac{2\tau}{3\varrho^\frac{1}{2}}\mathscr{D}^2P_h^{n+1}\|^2-\|\varrho^\frac{1}{2}\nabla\cdot\mathscr{D}\bm{u}_h^n+\frac{2\tau}{3\varrho^\frac{1}{2}}\mathscr{D}^2P_h^n\|^2\\
                        &+\frac{2\tau^2}{3\varrho}\biggl(\|P_h^{n+1}\|^2-\|P_h^n\|^2+\|\mathscr{D}P_h^n\|^2\biggr)\\
                        &-\frac{2\tau^2}{9\varrho}\|\mathscr{D}^2P_h^{n+1}\|^2-\frac{4\tau^2}{9\varrho}\|\mathscr{D}^2P_h^n\|^2+\frac{4\tau^2}{9\varrho}(\mathscr{D}P_h^{n+1},\mathscr{D}^2P_h^n)\\
                        \le&-\frac{4\tau}{\mathrm{Re}}\|(\widetilde{\eta}_h^{n+1})^\frac{1}{2}\mathbb{D}(\bm{u}_h^{n+1})\|^2-\frac{2\tau}{T}|{Q}^{n+1}|^2-\frac{2\tau}{\mathrm{Pe}\mathrm{We}\mathrm{Cn}}\|\nabla\mu_h^{n+1}\|^2.
                    \end{aligned}
                \end{equation}
                Again, we have dropped some unnecessary $|\mathscr{D}^2\chi^{n+1}|^2$ terms on the left-hand side, for simplicity of presentation. Moreover, based on the following equalities
                \begin{equation}
                    \begin{aligned}
                        \|\mathscr{D}^3P_h^{n+1}\|^2&=\|\mathscr{D}^2P_h^{n+1}\|^2-2(\mathscr{D}^2P_h^{n+1},\mathscr{D}^2P_h^n)+\|\mathscr{D}^2P_h^n\|^2,\\
                        \mathscr{D}^2P_h^n+2\mathscr{D}^2P_h^{n+1}-2\mathscr{D}P_h^{n+1}&=-\mathscr{D}P_h^n-\mathscr{D}P_h^{n-1},
                    \end{aligned}
                \end{equation}
                the last three terms on the left-hand side of \eqref{intermediate} become
                \begin{equation}
                    \begin{aligned}
                        -\frac{2\tau^2}{9\varrho}&\|\mathscr{D}^2P_h^{n+1}\|^2-\frac{4\tau^2}{9\varrho}\|\mathscr{D}^2P_h^n\|^2+\frac{4\tau^2}{9\varrho}(\mathscr{D}P_h^{n+1},\mathscr{D}^2P_h^n)\\
                        &=-\frac{2\tau^2}{9\varrho}\|\mathscr{D}^3P_h^{n+1}\|^2+\frac{2\tau^2}{9\varrho}(\mathscr{D}P_h^n+\mathscr{D}P_h^{n-1},\mathscr{D}^2P_h^n)\\
                        &\ge-\frac{\zeta}{2}\|\nabla\cdot\mathscr{D}^2\bm{u}_h^{n+1}\|^2+\frac{2\tau^2}{9\varrho} \Big( \|\mathscr{D}P_h^n\|^2-\|\mathscr{D}P_h^{n-1}\|^2 \Big ).
                    \end{aligned}
                \end{equation}

                The following quantities are introduced for the convenience of presentation:
                \begin{equation}
                    \begin{aligned}
                        x^n=&\frac{1}{2}\|(\widetilde{\sigma}\bm{u})_h^n\|^2+\frac{\mathrm{Cn}}{2\mathrm{We}}\|\nabla\phi_h^n\|^2+\frac{s}{2\mathrm{We}\mathrm{Cn}}\|\phi_h^n\|^2+\frac{1}{\mathrm{We}\mathrm{Cn}}|R^n|^2+\frac{1}{2}|Q^n|^2+\frac{\zeta}{2}\|\nabla\cdot\bm{u}_h^n\|^2,\\
                        b^n=&\frac{4\tau}{\mathrm{Re}}\|(\widetilde{\eta}_h^n)^\frac{1}{2}\mathbb{D}(\bm{u}_h^n)\|^2+\frac{2\tau}{T}|{Q}^n|^2+\frac{2\tau}{\mathrm{Pe}\mathrm{We}\mathrm{Cn}}\|\nabla\mu_h^n\|^2+\frac{2\tau^2}{3\varrho}\|\mathscr{D}P_h^{n-1}\|^2,\\
                        d^n=&\|\mathscr{D}(\widetilde{\sigma}\bm{u})_h^n\|^2+\frac{\mathrm{Cn}}{\mathrm{We}}\|\nabla\mathscr{D}\phi_h^n\|^2+\frac{s}{\mathrm{We}\mathrm{Cn}}\|\mathscr{D}\phi_h^n\|^2+\frac{2}{\mathrm{We}\mathrm{Cn}}|\mathscr{D}R^n|^2+|\mathscr{D}Q^n|^2\\
                        &+\|(\zeta-\varrho)^\frac{1}{2}\nabla\cdot\mathscr{D}\bm{u}_h^n\|^2+\|\varrho^\frac{1}{2}\nabla\cdot\mathscr{D}\bm{u}_h^n+\frac{2\tau}{3\varrho^\frac{1}{2}}\mathscr{D}^2P_h^n\|^2+\frac{2\tau^2}{3\varrho}\|P_h^n\|^2+\frac{2\tau^2}{9\varrho}\|\mathscr{D}P_h^{n-1}\|^2 .
                    \end{aligned}
                \end{equation}
                In turn, estimate \eqref{intermediate} could be rewritten as
                \begin{equation}
                    3x^{n+1}-4x^n+x^{n-1}\le{g}^{n+1} , \quad {n}\ge{2},
                \end{equation}
                where $g^{n+1}=-(b^{n+1}+d^{n+1}-d^n)$. By Lemma~\ref{ttri}, it is clear that
                \begin{equation}
                    x^N\le{c}\left(1+\frac{1}{3^N}\right)({x}^1+x^2)-\sum_{j=3}^N\frac{1}{3^{N+1-j}}\sum_{k=3}^j(b^k+d^k-d^{k-1}), \quad \forall N \ge 3 ,
                \end{equation}
                which leads to
                \begin{equation}
                    x^N+\frac{1}{3}\sum_{k=3}^Nb^k+\frac{1}{3}d^N\le{c}({x}^1+x^2+d^2) ,
                \end{equation}
                for some constants $c$. Therefore, the result is valid for $N\ge{3}$, by dropping some positive terms on the left-hand side, provided that $x^2,b^2$ and $d^2$ are bounded by the initial data.

                In terms of the initial time step, the identity $P_h^\sharp=2P_h^1-P_h^0=P_h^2-\mathscr{D}^2P_h^2$ indicates that
                \begin{equation}
                    -(P_h^\sharp,2\tau\nabla\cdot\bm{u}_h^2)=\frac{4\tau^2}{3\varrho}(\mathscr{D}P_h^2,P_h^2-\mathscr{D}^2P_h^2)=\frac{2\tau^2}{3\varrho}\left(\|P_h^2\|^2-\|P_h^1\|^2+\|\mathscr{D}P_h^1\|^2-\|\mathscr{D}^2P_h^2\|^2\right) .
                \end{equation}
                Consequently, estimate \eqref{discrete_energy_mid_2_2} could be rewritten as
                \begin{equation}
                    \begin{aligned}
                        \frac{1}{2}&\left(3\|(\widetilde{\sigma}\bm{u})_h^2\|^2+2\|\mathscr{D}(\widetilde{\sigma}\bm{u})_h^2\|^2\right)+\frac{\mathrm{Cn}}{2\mathrm{We}}\left(3\|\nabla\phi_h^2\|^2+2\|\nabla\mathscr{D}\phi_h^2\|^2\right)+\frac{s}{2\mathrm{We}\mathrm{Cn}}\left(3\|\phi_h^2\|^2+2\|\mathscr{D}\phi_h^2\|^2\right)\\
                        &+\frac{1}{\mathrm{We}\mathrm{Cn}}\left(3|R^2|^2+2|\mathscr{D}R^2|^2\right)+\frac{1}{2}\left(3|Q^2|^2+2|\mathscr{D}Q^2|^2\right)+\frac{\zeta}{2}\left(3\|\nabla\cdot\bm{u}_h^2\|^2+2\|\nabla\cdot\mathscr{D}\bm{u}_h^2\|^2\right)\\
                        &+\frac{2\tau^2}{3\varrho}\left(\|P_h^2\|^2-\|\mathscr{D}^2P_h^2\|^2\right)+\frac{4\tau}{\mathrm{Re}}\|(\widetilde{\eta}_h^2)^\frac{1}{2}\mathbb{D}(\bm{u}_h^2)\|^2+\frac{2\tau}{T}|{Q}^2|^2+\frac{2\tau}{\mathrm{Pe}\mathrm{We}\mathrm{Cn}}\|\nabla\mu_h^2\|^2+\frac{2\tau^2}{3\varrho}\|\mathscr{D}P_h^1\|^2\\
                        \le&2\|(\widetilde{\sigma}\bm{u})_h^1\|^2+\|\mathscr{D}(\widetilde{\sigma}\bm{u})_h^1\|^2+\frac{2\mathrm{Cn}}{\mathrm{We}}\|\nabla\phi_h^1\|^2+\frac{\mathrm{Cn}}{\mathrm{We}}\|\mathscr{D}\phi_h^1\|^2+\frac{2s}{\mathrm{We}\mathrm{Cn}}\|\phi_h^1\|^2+\frac{s}{\mathrm{We}\mathrm{Cn}}\|\mathscr{D}\phi_h^1\|^2\\
                        &+\frac{4}{\mathrm{We}\mathrm{Cn}}|R^1|^2+\frac{2}{\mathrm{We}\mathrm{Cn}}|\mathscr{D}R^1|^2+2|Q^1|^2+|\mathscr{D}Q^1|^2+3\zeta\|\nabla\cdot\bm{u}_h^1\|^2+\frac{2\tau^2}{3\varrho}\|P_h^1\|^2 .
                    \end{aligned}
                \end{equation}
                Meanwhile, because of the following inequality, which comes from \eqref{MCHNS_rhot_2}:
                \begin{equation}
                    \label{control_nableP_4}
                    \frac{2\tau^2}{3\varrho}\|\mathscr{D}^2P_h^2\|^2\le\frac{\zeta}{2}\|\nabla\cdot\mathscr{D}\bm{u}_h^2\|^2,
                \end{equation}
                we obtain bounds for $x^2,$ $b^2$ and some terms in $d^2$. Finally, with the help of the identity
                \begin{equation}
                    \|(\zeta-\varrho)^\frac{1}{2}\nabla\cdot\mathscr{D}\bm{u}_h^2\|^2+\|\varrho^\frac{1}{2}\nabla\cdot\mathscr{D}\bm{u}_h^2+\frac{2\tau}{3\varrho^\frac{1}{2}}\mathscr{D}^2P_h^2\|^2=\zeta\|\nabla\cdot\mathscr{D}\bm{u}_h^2\|^2-\frac{4\tau^2}{9\varrho}\|\mathscr{D}^2P_h^2\|^2 ,
                \end{equation}
                the bound for the rest terms in $d^2$ could be similarly derived. This completes the proof.
            \end{proof}
            \begin{remark}
                In practice, we use the formula $(\mathscr{D}P_h^1,q_h)=-\frac{\gamma_0\varrho}{\tau}(\nabla\cdot\bm{u}_h^1,q_h)$ to obtain $P_h^1,$ which accounts for the validity of \eqref{control_nableP_3}, \eqref{control_nableP_4} to simplify the analysis. One can also use the first order BDF method (presented above) to get $P_h^1$, and the discrete energy law is still valid, with a minor modification in the theoretical proof; also see \cite{2011_guermond} for more details.
            \end{remark}
            \begin{remark}
                In our knowledge, the three term recursion inequality is the only technique to prove the energy stability of the BDF2 scheme for incompressible flows with variable density. Of course, the theoretical result in Theorem \ref{energy_law2} is still suboptimal, and more advanced techniques need to be developed to derive a more accurate estimate. Irrespective of the theoretical challenges for the BDF2 scheme, the time filter technique~\cite{2022_decaria, 2021_decaria}, which has emerged as an efficient and easily implemented method in developing high order time-stepping schemes in CFD, might be applicable for developing second-order accurate schemes with provable energy stability. In fact, this technique has been successfully employed to incompressible flows with variable density \cite{2023_li}. Its implementation will be investigated in our future work.
            \end{remark}
            \begin{remark}
                In addition to the energy stability analysis, a theoretical justification of convergence analysis and error estimate is highly desirable for the AGG model. In fact, for various physical systems of a phase field model coupled with incompressible fluid motion, there have been some existing works of optimal rate error analysis in the case of a constant density, such as the Cahn-Hilliard-Hele-Shaw~\cite{chen19a, chen16, LiuY17}, Cahn-Hilliard-Navier-Stokes~\cite{2017_diegel}, Cahn-Hilliard-Stokes-Darcy~\cite{chen22c}, etc. Meanwhile, the convergence estimates have also been reported for the SAV numerical schemes~\cite{ChengQ2021a, WangM2021}, if there is no fluid motion. Of course, an extension to the AGG model, in which the density variation has played an important role, would be highly challenging. The convergence property has been reported in~\cite{2014_grun}, while a convergence rate analysis was not available. An optimal rate convergence analysis for the proposed numerical schemes, \eqref{MCHNSt_1} and \eqref{MCHNSt_2}, will be considered in the future works.
            \end{remark}

    \section{Numerical results}
    \label{section_4}
        In this section, several representative numerical tests are presented to investigate the validity of the proposed numerical schemes \eqref{MCHNSt_1} and \eqref{MCHNSt_2}. If without any specific explanations, in all the simulations, we take $s=2$, $S=10$ in the SAV formulation \eqref{SAV}, $\zeta=3\varrho$ in the scheme \eqref{MCHNSt_2}, and use the MINI $\mathbb{P}_1^{\text{b}}\times\mathbb{P}_1$ element for $(\bm{u}_h,P_h),$ and Lagrange $\mathbb{P}_1$ element for $\phi_h$ and $\mu_h$. For simplicity, we use PG (respective to AC) to denote the scheme \eqref{MCHNSt_1} (respective to  \eqref{MCHNSt_2}), and use CM (respective to DM) to denote constant mobility (respective to degenerate mobility).

        \subsection{Accuracy test}
            \begin{figure}[ht]
                \centering
                \subfloat[PG+CM.]{
                    \includegraphics[width=0.4\linewidth]{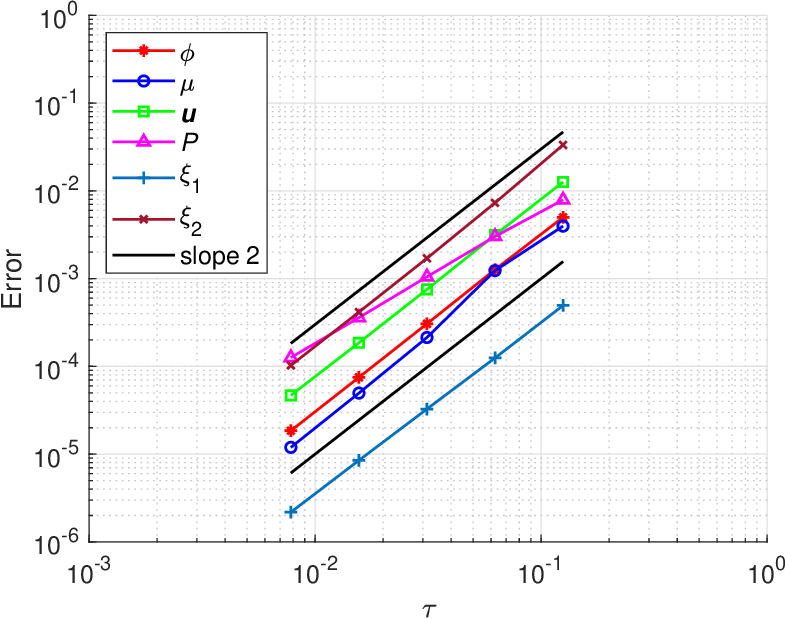}
                }
                \subfloat[AC+CM.]{
                    \includegraphics[width=0.4\linewidth]{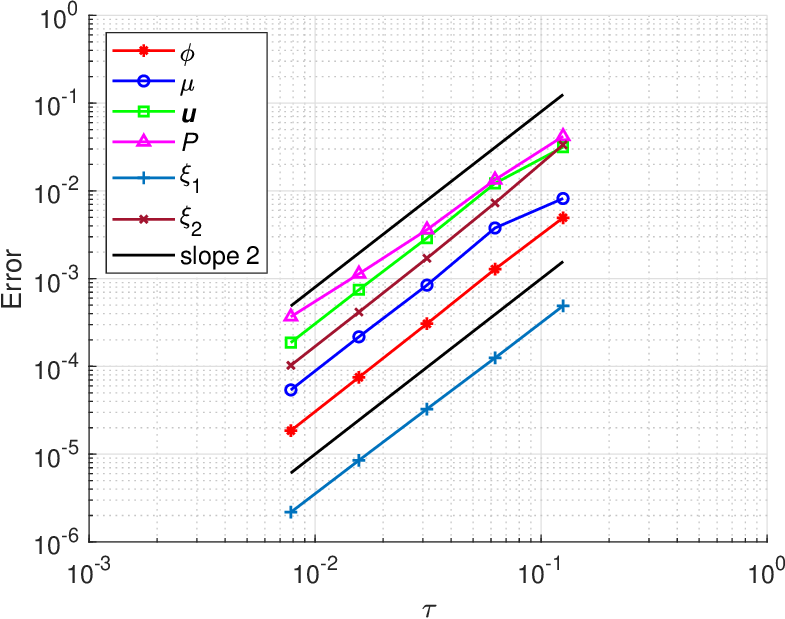}
                }
                \caption{Numerical errors, with $\tau=h$, at $T=0.5.$}
                \label{error}
            \end{figure}
            First we use manufactured analytic solutions to assess the convergence orders of the proposed schemes. The computational domain is taken as $\Omega=(0,1)^2$. With suitable extra source terms, the exact solutions are given by
            \begin{equation}
                \begin{aligned}
                    \phi=\mu=&\cos(\pi{x})\cos(\pi{y})\sin{t},\\
                    \bm{u}=&\left(\sin^2(\pi{x})\sin(2\pi{y}),-\sin(2\pi{x})\sin^2(\pi{y})\right)^T\sin{t},\\
                    P=&\cos(\pi{x})\sin(\pi{y})\cos{t}.
                \end{aligned}
            \end{equation}
            The model parameters are set as: $\mathrm{Cn}=1,\mathrm{Pe}=2\pi^2,\mathrm{Re}=2\pi^2,\mathrm{We}=1,\mathrm{Fr}=1,$ and $\rho_1/\rho_2=50$, $\eta_1/\eta_2=50$, $\rho_r=\rho_1$, $\eta_r=\eta_1$. To observe the convergence orders, we set time step size $\tau=h$, and refine the spatial mesh size as $h=2^{-i},i=3,4,5,6,7$. For simplicity, we only consider the $L^2$ error for $\phi,\mu,\bm{u},P$ and absolute error for $\xi_1,\xi_2$ at $T=0.5.$

            The results of the numerical errors for both schemes are displayed in Figure~\ref{error}. The expected second-order convergence has been observed for $\phi,\mu,\bm{u},\xi_1$ and $\xi_2$ in both schemes, while a loss of accuracy for $P$ is also presented. Actually, the convergence order of $P$ is approximately $\mathcal{O}(h^{1.5})$ for PG+CM and $\mathcal{O}(h^{1.7})$ for AC+CM, in this test. The super-convergence of the MINI element has already been investigated in many existing works, so that the $\mathcal{O}(h^{1.5})$ super-convergence of pressure $P$ for PG+CM seems reasonable. However, the $\mathcal{O}(h^{1.7})$ super-convergence of pressure $P$ for AC+CM has been beyond our expectation (a linear convergence order is expected), and a good explanation has not been available. Meanwhile, for the phase-field-fluid model with a free-slip boundary condition for the velocity variable, a full $\mathcal{O}(h^2)$ convergence analysis for the pressure variable has been reported in~\cite{chen24c}. A theoretical analysis of the convergence rate for the proposed numerical scheme, in terms of the associated physical variables, will be explored in the future works.

        \subsection{Stability test}
            \begin{figure}[ht]
                \centering
                \includegraphics[width=0.8\linewidth]{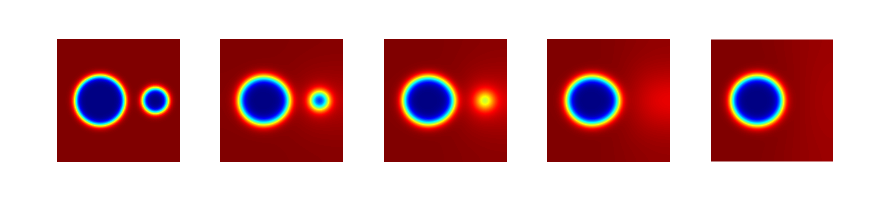}
                \caption{The profiles of $\phi$ computed by the AC+CM scheme, at a sequence of time instants, $t=0,0.4,0.5,0.6$ and $1$.}
                \label{stability_evolution}
            \end{figure}
            To verify the mass conservation and unconditional energy stability for the proposed schemes, we consider a benchmark problem of coarsening process for the Cahn-Hilliard equation \cite{2020_yang}. The computational domain is taken as $\Omega=(0,1)^2$, and the spatial mesh size is chosen to be $h=2^{-7}$. The initial data are set  as
            \begin{equation}
                \phi^0=-1+\tanh\left(\frac{\sqrt{(x-0.35)^2+(y-0.5)^2}-0.2}{\mathrm{Cn}}\right)+\tanh\left(\frac{\sqrt{(x-0.8)^2+(y-0.5)^2}-0.1}{\mathrm{Cn}}\right),
            \end{equation}
            while the other physical variables are taken to zero at $t=0$. The model parameters are given by: $\mathrm{Cn}=3\times{10}^{-2},\mathrm{Re}=100$, $\mathrm{We}=50$,  and $\rho_1/\rho_2=\eta_1/\eta_2=50$, $\rho_r=\rho_1$, $\eta_r=\eta_1$.
            \begin{figure}[ht]
                \centering
                \subfloat[Modified energy computed by PG+CM.]{
                    \includegraphics[width=0.4\linewidth]{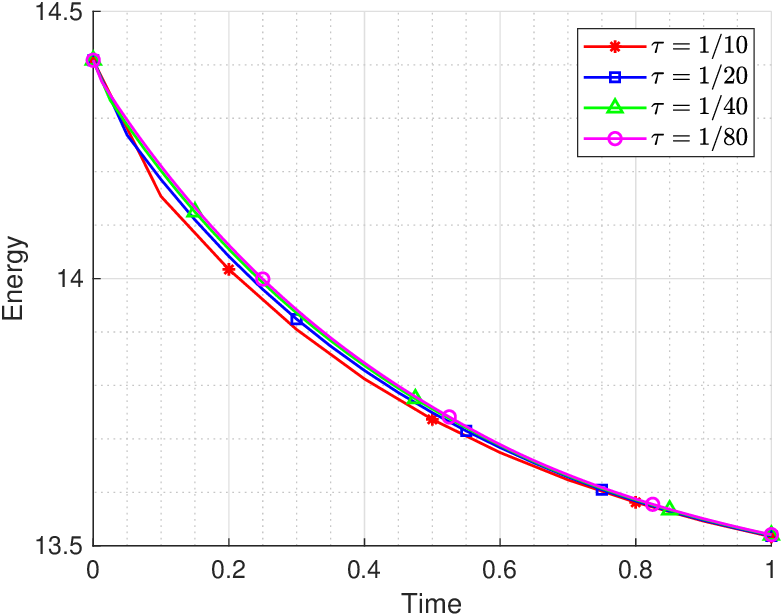}
                    \label{modified_energy_evolution_1}
                }
                \subfloat[Modified energy computed by AC+CM.]{
                    \includegraphics[width=0.4\linewidth]{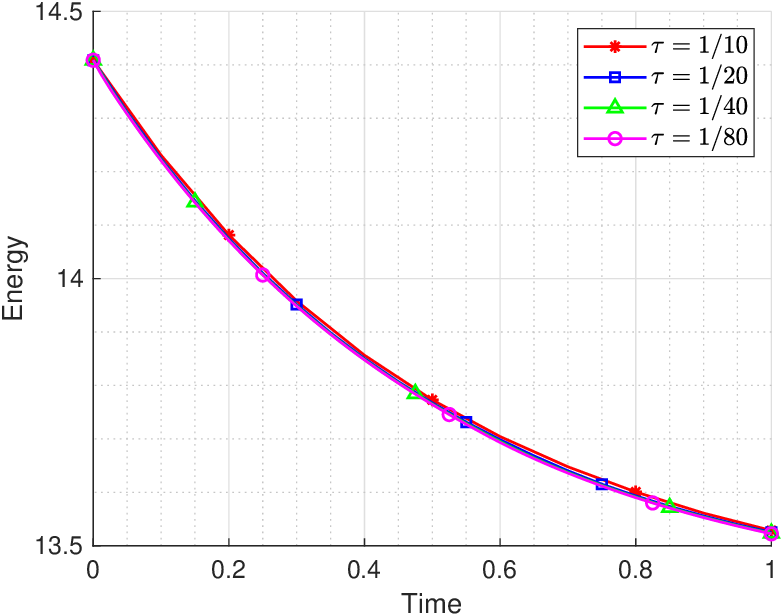}
                    \label{modified_energy_evolution_2}
                }
                \\
                \subfloat[Comparison of energies computed by PG+CM.]{
                    \includegraphics[width=0.4\linewidth]{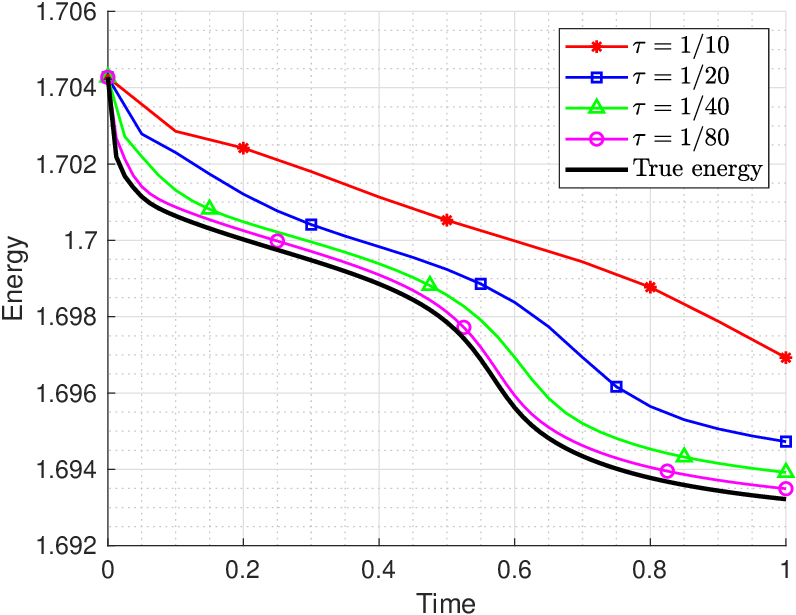}
                    \label{comparison_energy_evolution_1}
                }
                \subfloat[Comparison of energies computed by AC+CM.]{
                    \includegraphics[width=0.4\linewidth]{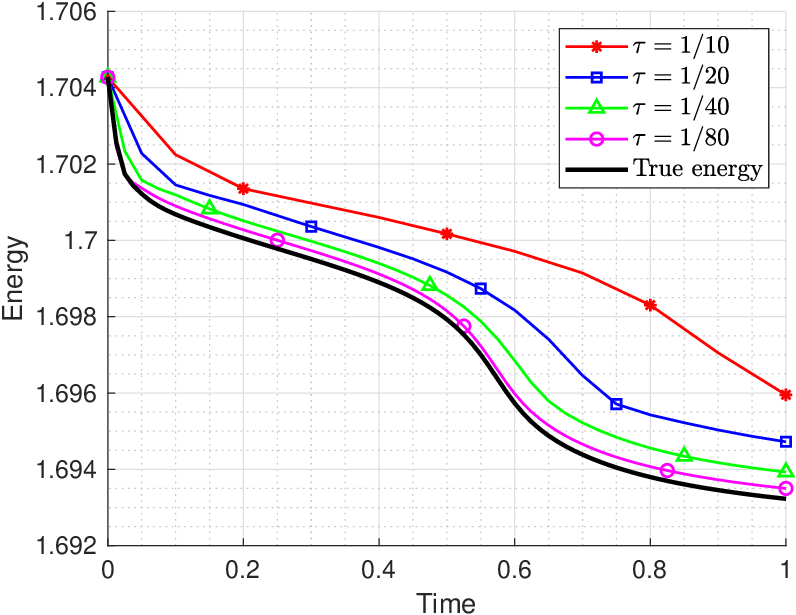}
                    \label{comparison_energy_evolution_2}
                }
                \caption{Time evolution of discrete energy.}
            \end{figure}

            In Figure~\ref{stability_evolution}, the evolution of the order parameter $\phi_h$ at a sequence of time instants is displayed, and the coarsening effect could be clearly observed in the process that the smaller bubble is absorbed into the bigger one. As depicted in Figures~\ref{modified_energy_evolution_1} and \ref{modified_energy_evolution_2}, the discrete modified energy computed by both schemes decays monotonically for different time step sizes, and this fact has numerically verified an unconditional energy stability. Although the modified energy dissipation law \eqref{MCHNS} is equivalent to the original one \eqref{CHNS}, the modified discrete energy in \eqref{discrete_energy_1} and \eqref{discrete_energy_2} does not approximate the true energy in \eqref{true_energy}. If we want to compare them, the extra terms involving $Q^n$ must be removed, and the constant $S$ should be subtracted. Therefore, as depicted in Figures~\ref{comparison_energy_evolution_1} and \ref{comparison_energy_evolution_2}, it can be observed that the modified energy approaches the true energy (computed by $\tau=\frac{1}{80}$) as the time step size is refined. 

            \begin{figure}[ht]
                \centering
                \subfloat[$\xi_1^n.$]{
                    \includegraphics[width=0.3\linewidth]{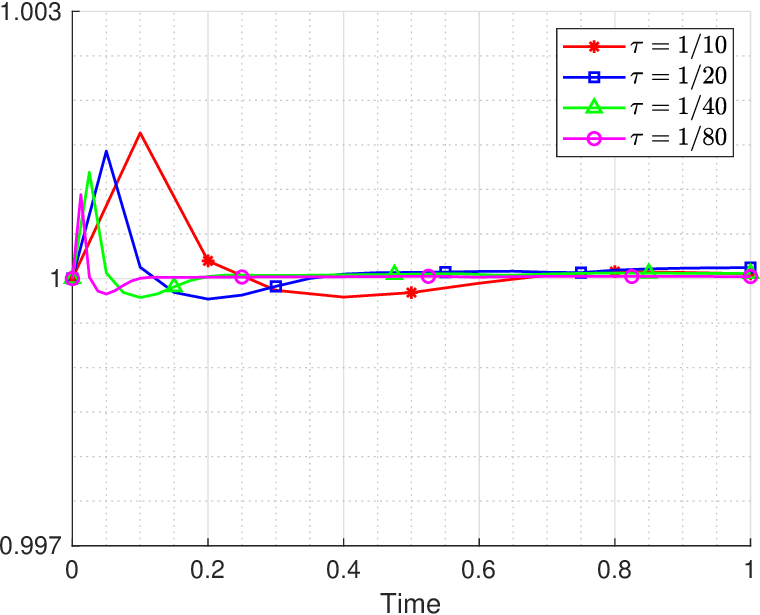}
                }
                \subfloat[$\xi_2^n.$]{
                    \includegraphics[width=0.3\linewidth]{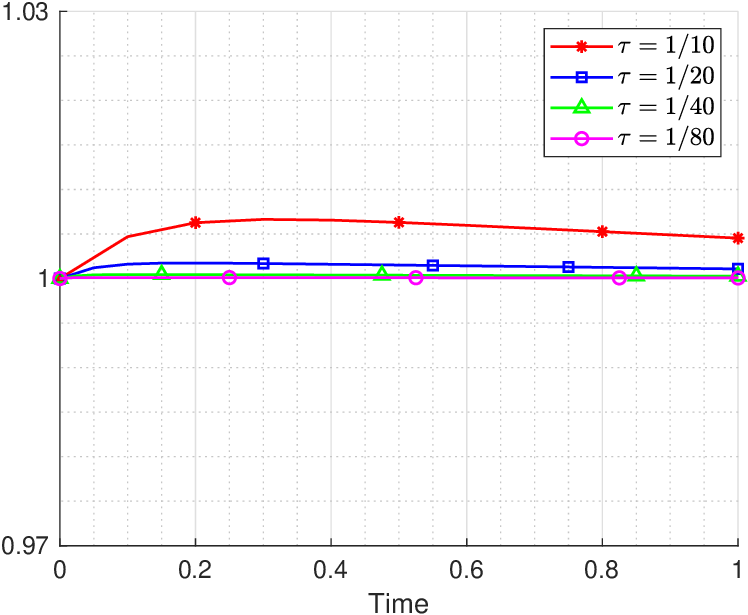}
                }
                \subfloat[$\int_\Omega\phi_h^n\mathrm{d}\bm{x}-\int_\Omega\phi_h^0\mathrm{d}\bm{x}.$]{
                    \includegraphics[width=0.3\linewidth]{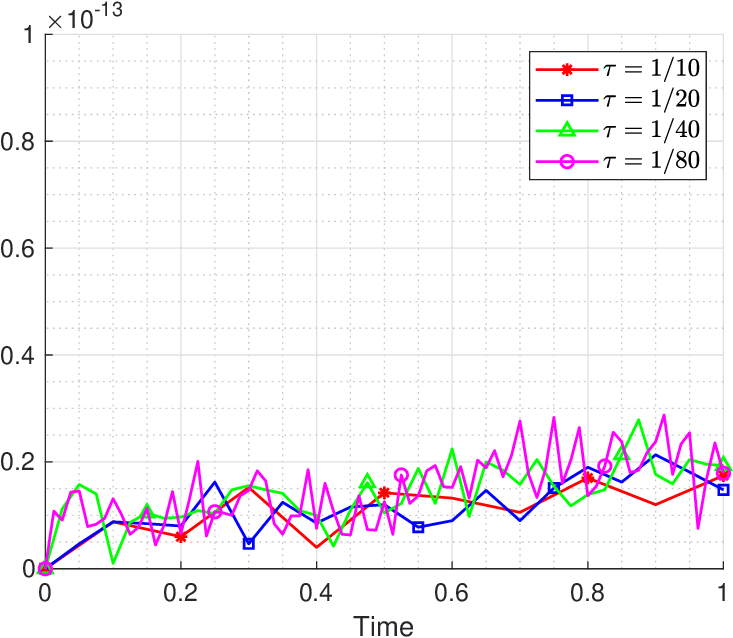}
                }
                \caption{Time evolution of the auxiliary variables and mass drift computed by AC+CM.}
                \label{properity_evolution}
            \end{figure}

            Since the numerical schemes are based on the SAV reformulation, it is imperative to verify the numerical robustness by taking a look at $\xi_1^n$ and $\xi_2^n$, and see whether they remain sufficiently close to 1. In \cite{2019_yang}, where the only existing work of fully decoupled, second-order in time scheme was presented, the authors observed that the auxiliary variables would decrease sharply even with a small time step size in rising air bubble simulations. This fact implies that the simulation is no longer reliable. Meanwhile, a second-order in time finite element scheme is studied in \cite{2021_fu}, while the authors failed to derive a decoupled structure by employing the ZEC feature as the associated auxiliary variables will quickly diminish to a value close to zero in numerical simulations, no matter how small the time step size is chosen.

            In fact, the proposed schemes in this article could circumvent these subtle drawbacks. The time evolution of $\xi_1$ and $\xi_2$ is displayed in Figure~\ref{properity_evolution}; it is clearly observed that both $\xi_1$ and $\xi_2$ are sufficiently close to $1$ even with a large time step size. This numerical result demonstrates the effectiveness of the proposed schemes. In addition, the robustness of $\xi_1$ seems to surpass that of $\xi_2$, possibly due to its physical relevance. Furthermore, the plot of the mass drift, i.e., $\int_\Omega\phi_h^n\mathrm{d}\bm{x}-\int_\Omega\phi_h^0\mathrm{d}\bm{x}$, is presented in Figure~\ref{properity_evolution}, and an excellent mass conservation could be observed.

        \subsection{Capillary wave}
            \begin{table}[ht]
                \centering
                \caption{Physical parameters and corresponding dimensionless numbers for the capillary wave benchmark problem.}
                \begin{tabular}{llllllll}
                \hline
                $\rho_1/\rho_2$ & $\nu$ & $\rho_1(\rho_r)$ & $\eta_1(\eta_r)$ & $g$ & $\lambda$ & $\mathrm{Re}$ & $\mathrm{We}$            \\
                \hline
                10              & 0.01  &10                & 0.1              & 1   & 1         & $100$         & $\frac{20\sqrt{2}}{3}$   \\
                100             & 0.01  &100               & 1                & 1   & 1         & $100$         & $\frac{200\sqrt{2}}{3}$  \\
                1000            & 0.01  &1000              & 10               & 1   & 1         & $100$         & $\frac{2000\sqrt{2}}{3}$ \\
                \hline
                \end{tabular}
                \label{parameters_capillary_wave}
            \end{table}
        
            The capillary wave problem \cite{1981_prosperetti}, for which an analytic solution is available, is investigated in this subsection. In this benchmark problem, the lighter fluid is placed on top of the heavier one, and the interface, initially perturbed by a sinusoidal function with a small amplitude $H_0$, wave number $k$ and zero initial velocity, will begin oscillating under the influence of gravity and surface tension. The analytic solution is derived in \cite{1981_prosperetti} for two fluids having the same kinematic vsicosity $\nu=\frac{\eta_1}{\rho_1}=\frac{\eta_2}{\rho_2}$ in an infinite domain. In turn, the evolution of the capillary wave amplitude $H(t)$ has the following form
            \begin{equation}
                \label{capillary_wave_analytic}
                \frac{H(t)}{H_0}=\frac{4(1-4\beta)\nu^2k^4}{8(1-4\beta)\nu^2k^4+\omega_0^2}\mathrm{erfc}\left(\sqrt{\nu{k}^2t}\right)+\sum_{i=1}^4\frac{z_i}{Z_i}\frac{\omega_0^2}{z_i^2-\nu{k}^2}e^{\left(z_i^2-\nu{k}^2\right)t}\mathrm{erfc}\left(z_i\sqrt{t}\right),
            \end{equation}
            where $\omega_0^2=\frac{(\rho_1-\rho_2)gk+\lambda{k}^3}{\rho_1+\rho_2},\beta=\frac{\rho_1\rho_2}{(\rho_1+\rho_2)^2}$, and $\mathrm{erfc}(\cdot)$ is the complementary error function. The variables $z_i,i=1,\cdots,4,$ are the four roots of the algebraic equation
            \begin{equation}
                z^4-4\beta\sqrt{\nu{k}^2}z^3+2(1-6\beta)\nu{k}^2z^2+4(1-3\beta)(\nu{k}^2)^\frac{3}{2}z+(1-4\beta)\nu^2k^4+\omega_0^2=0,
            \end{equation}
            and $Z_i=\prod_{1\le{j}\le{4},j\neq{i}}(z_j-z_i),i=1,\cdots,4.$
            The motion of the interface is simulated in a rectangular domain $\Omega=(0,1)\times(-1,1)$, with no-slip boundaries on the horizontal walls and periodic boundaries on the vertical walls. The initial perturbed interface is described by ($H_0=0.01,k=2\pi$):
            \begin{equation}
                \phi^0 (x,y)=-\tanh\left(\frac{y+0.01\cos{2\pi{x}}}{\mathrm{Cn}}\right).
            \end{equation}
            The other physical variables are set to be zero at the initial time step. The relevant physical parameters and corresponding dimensionless numbers are listed in Table \ref{parameters_capillary_wave}. Since the interface dynamics is only concentrated in a narrow region, we consider a mesh that is locally refined on $\Omega_1=(0,1)\times(-0.05,0.05)$, where $h=2^{-8}$ on $\Omega_1$ and $h=2^{-4}$ on $\Omega\backslash\Omega_1$. Meanwhile, $\mathrm{Cn}=1\times{10}^{-2}$ with $\tau=5\times{10}^{-4}$ is taken in all cases.

            Figure~\ref{capillary_wave_amplitude} displays time evolutions of capillary wave amplitude of different density ratios, in comparison with the analytic solution \eqref{capillary_wave_analytic} and numerical results from \cite{2019_yang} (the only fully decoupled scheme for the AGG model, and the results were obtained by high order spectral elements and very small time steps). Both schemes exhibit the same amplitude histories and agree well with the reference data.

            \begin{figure}[ht]
                \centering
                \subfloat[$\rho_1/\rho_2=10$.]{
                    \includegraphics[width=0.3\linewidth]{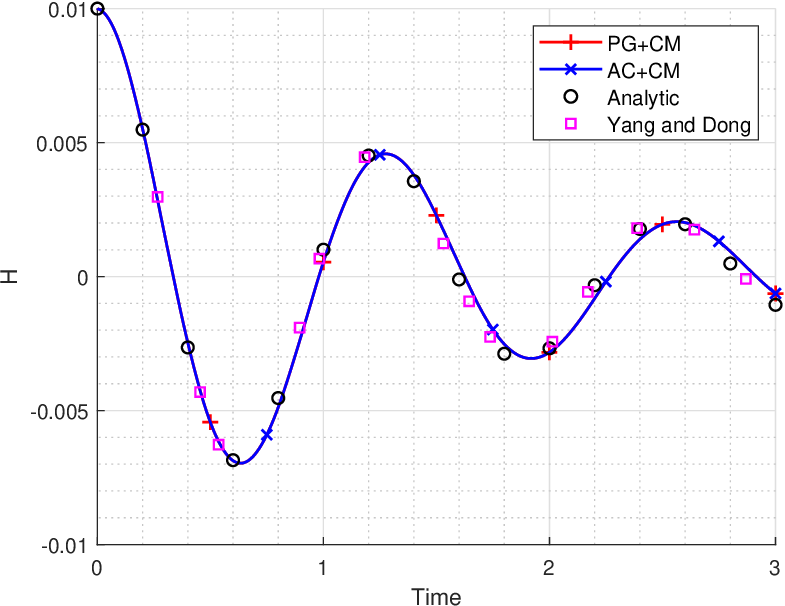}
                }
                \subfloat[$\rho_1/\rho_2=100$.]{
                    \includegraphics[width=0.3\linewidth]{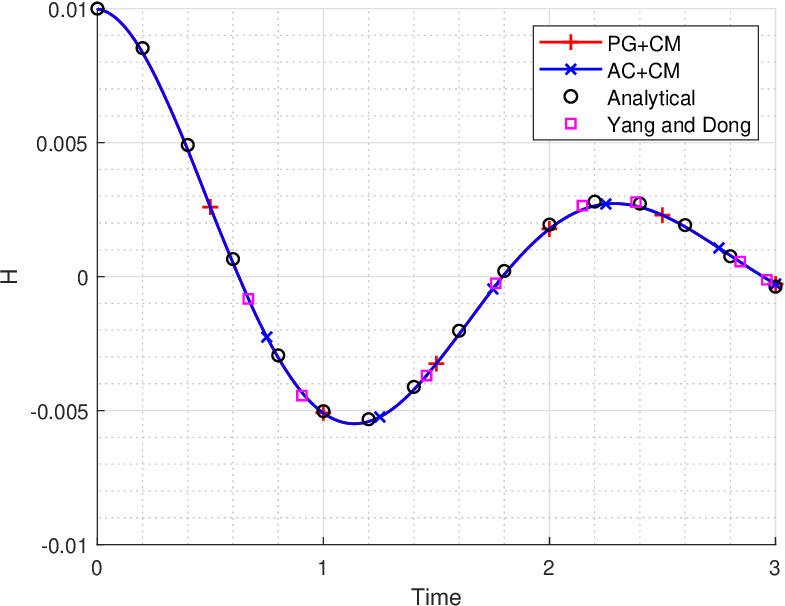}
                }
                \subfloat[$\rho_1/\rho_2=1000$.]{
                    \includegraphics[width=0.3\linewidth]{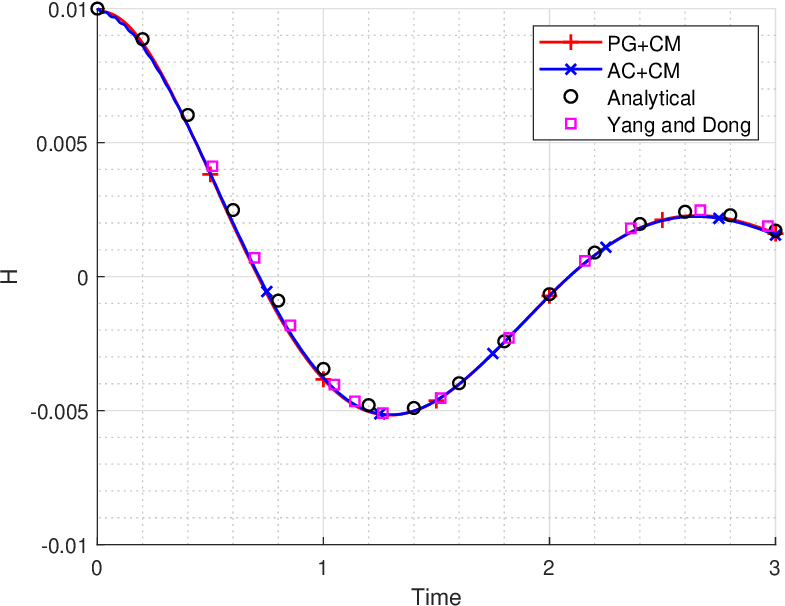}
                }
                \caption{Comparison of capillary wave amplitude versus time.}
                \label{capillary_wave_amplitude}
            \end{figure}

        \subsection{Single rising bubble}
        \label{benckmark_bubble}
            \begin{table}[ht]
                \centering
                \caption{Physical parameters and corresponding dimensionless numbers for the single rising bubble benchmark problem.}
                \begin{tabular}{lllllllll}
                \hline
                Test case & $\rho_1/\rho_2$ & $\rho_1(\rho_r)$ & $\eta_1/\eta_2$ & $\eta_1(\eta_r)$ & $g$  & $\lambda$ & $\mathrm{Re}$ & $\mathrm{We}$            \\
                \hline
                1         & 10              &1000              & 10              &10                & 0.98 & 24.5      & $70\sqrt{2}$  & $\frac{80\sqrt{2}}{3}$   \\
                2         & 1000            &1000              & 100             &10                & 0.98 & 1.96      & $70\sqrt{2}$  & $\frac{1000\sqrt{2}}{3}$ \\
                \hline
                \end{tabular}
                \label{parameters_bubble_rising}
            \end{table}
            In this subsection, we consider the benchmark problem of a single rising bubble in a liquid column proposed in \cite{2009_hysing}, an interplay between surface tension and buoyancy. The domain is $(0,1) \times (0,2)$, and a circular bubble is centered at $(0.5 ,0.5)$ with a radius of $0.25$, which corresponds to the following initial data for $\phi$:
            \begin{equation}
                \phi^0=\tanh \Big( \frac{\sqrt{(x-0.5)^2+(y-0.5)^2}-0.25}{\mathrm{Cn}} \Big) .
            \end{equation}
            The other physical variables are set to zero at the initial time step. Following the original set-up for Navier-Stokes equations, we choose no-slip boundaries on the horizontal walls, and free-slip boundaries on the vertical walls. The relevant physical parameters used in \cite{2009_hysing} and corresponding dimensionless numbers are listed in Table \ref{parameters_bubble_rising}. And to have a comprehensive understanding of bubble dynamics, the following benchmark quantities proposed in \cite{2009_hysing} are considered:
            \begin{itemize}
                \item Centroid $y_c=\displaystyle\frac{\int_{\phi<0}y\mathrm{d}\bm{x}}{\int_{\phi<0}\mathrm{d}\bm{x}}$
                \item Circularity \rlap{/}$c=\displaystyle\frac{2\sqrt{\int_{\phi<0}\pi\mathrm{d}\bm{x}}}{\int_{\phi=0}\mathrm{d}s}$
                \item Rise velocity $V_c=\displaystyle\frac{\int_{\phi<0}u_2\mathrm{d}\bm{x}}{\int_{\phi<0}\mathrm{d}\bm{x}}$
            \end{itemize}

            In both cases, we take $h=2^{-6},\tau=2\times{10}^{-3}$ with $\mathrm{Cn}=1\times{10}^{-2},$ or $h=2^{-7},\tau=1\times{10}^{-3}$ with $\mathrm{Cn}=5\times{10}^{-3}$. To investigate the impact of mobility function on interface dynamics, we choose $1/\mathrm{Pe}=0.1$ for $m=(\phi^2-1)^2$. In addition, the data provided by group 3 in \cite{2009_hysing}, which solves the sharp interface model using an arbitrary Lagrangian–Eulerian finite element method, is taken as reference.

        \subsubsection{Test case 1}
            \begin{figure}[ht]
                \centering
                \includegraphics[width=0.4\linewidth]{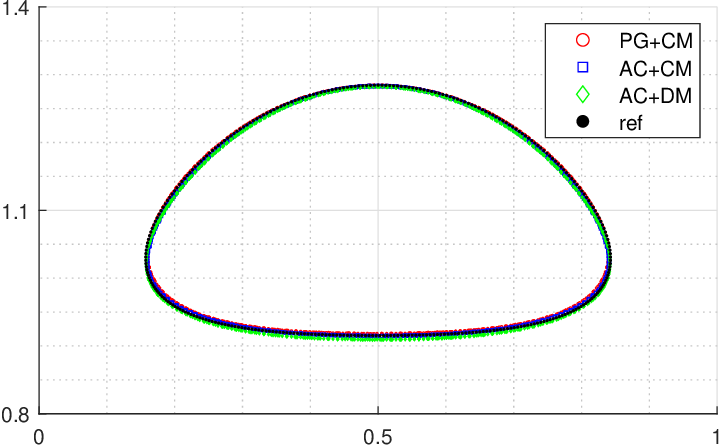}
                \caption{Comparison of the bubble shapes, computed by $h=2^{-7}$, for test case 1 at dimensional time $t^*=3$.}
                \label{bubble_shape_1}
            \end{figure}
            This test considers a low $\mathrm{We}$ number, which corresponds to a high surface tension and less deformation of the bubble shape. This shape at dimensional time $t^*=3$, the contour line of $\phi_h=0$, is depicted in Figure~\ref{bubble_shape_1}, in comparison with the reference data. All the numerical schemes create a highly similar ellipsoidal bubble shape, and have an excellent agreement with the reference data. To further confirm the numerical accuracy, we also present the time evolution plot (with respect to dimensional time) of benchmark quantities in Figure~\ref{benchmark_quantities_1}. Quantitative comparison with the benchmark values is displayed in Table \ref{benchmark_quantities_data_1}. The centroid and rise velocity exhibit an excellent agreement with the reference data, while only a minor deviation is observed in terms of the circularity. Based on this numerical investigation, it can be concluded that, the form of mobility has a negligible impact on these benchmark quantities with low $\mathrm{We}$ numbers (high surface tensions). Therefore, a constant mobility is significantly more efficient than a degenerate one in such a low $\mathrm{We}$ number region.
            \begin{figure}[ht]
                \centering
                \subfloat[Circularity.]{
                    \includegraphics[width=0.3\linewidth]{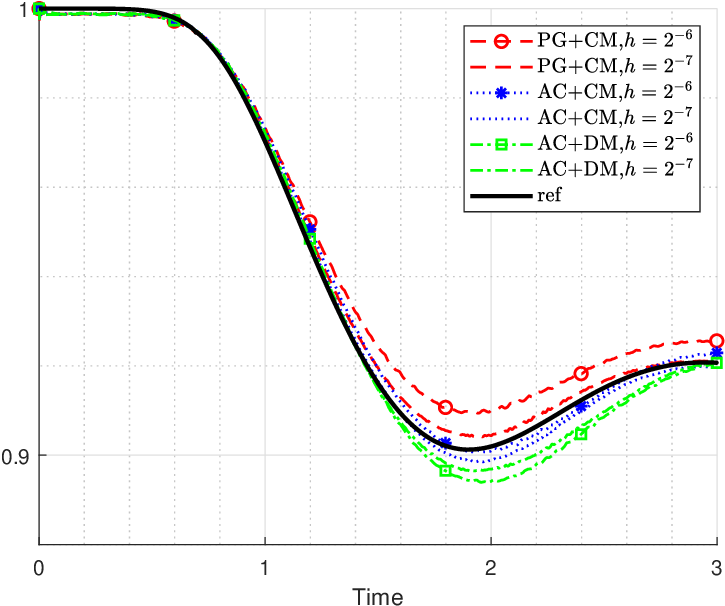}
                }
                \subfloat[Centroid.]{
                    \includegraphics[width=0.3\linewidth]{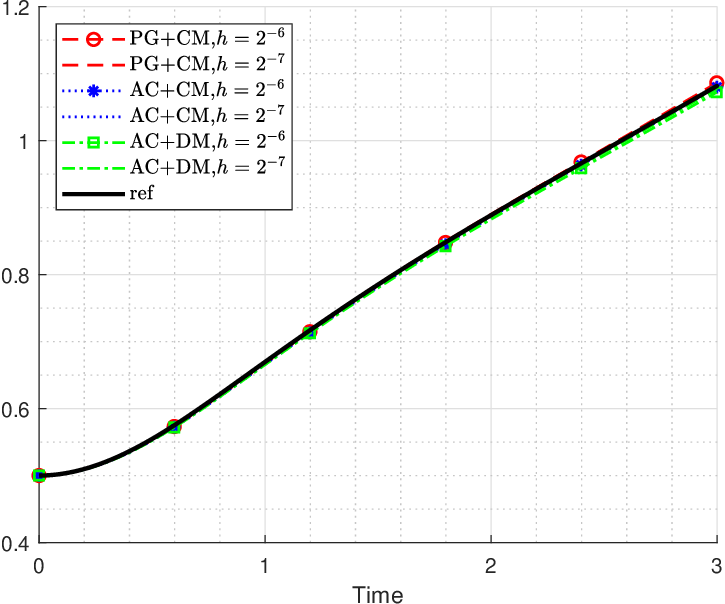}
                }
                \subfloat[Rise velocity.]{
                    \includegraphics[width=0.3\linewidth]{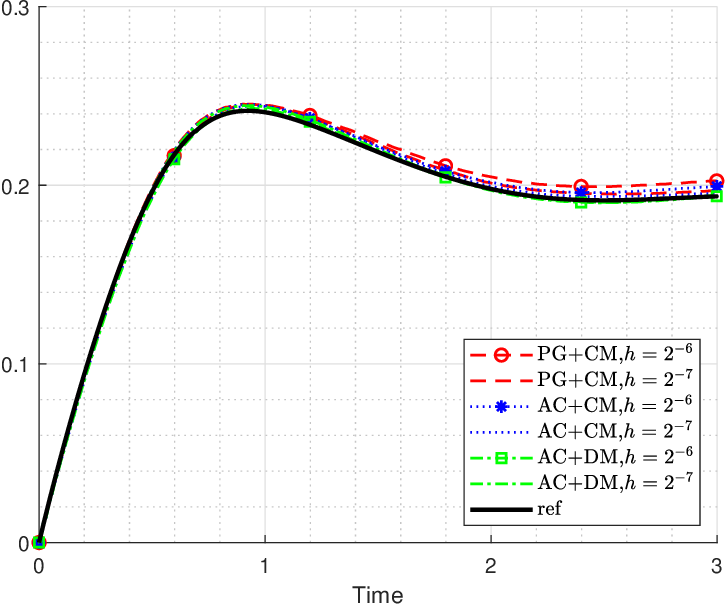}
                }
                \caption{Time evolution of benchmark quantities for test case 1.}
                \label{benchmark_quantities_1}
            \end{figure}
            \begin{table}[ht]
                \centering
                \caption{Minimum circularity and maximum rise velocity, with corresponding incidence time instants and final position of the centroid for test case 1.}
                \begin{tabular}{lllllll}
                \hline
                type                    & $h$         & \rlap{/}$c_{\mathrm{min}}$ & $t^*|_{\text{\rlap{/}}c=\text{\rlap{/}}c_{\mathrm{min}}}$ & $V_{c,\mathrm{max}}$ & $t^*|_{V_{c}=V_{c,\mathrm{max}}}$ & $y_c(t^*=3)$ \\
                \hline
                \multirow{2}{*}{PG+CM}  & $2^{-6}$    & 0.9094                     & 1.9415                                                  & 0.2451               & 0.9576                          & 1.0862       \\
                                        & $2^{-7}$    & 0.9041                     & 1.9637                                                  & 0.2455               & 0.9384                          & 1.0841       \\
                \multirow{2}{*}{AC+CM}  & $2^{-6}$    & 0.9007                     & 1.9254                                                  & 0.2443               & 0.9534                          & 1.0798       \\
                                        & $2^{-7}$    & 0.8985                     & 1.9637                                                  & 0.2453               & 0.9405                          & 1.0808       \\
                \multirow{2}{*}{AC+DM}  & $2^{-6}$    & 0.8939                     & 1.9557                                                  & 0.2430               & 0.9455                          & 1.0722       \\
                                        & $2^{-7}$    & 0.8964                     & 1.9041                                                  & 0.2446               & 0.9405                          & 1.0773       \\
                \multicolumn{1}{c}{ref} & $\diagdown$ & 0.9013                     & 1.9000                                                  & 0.2417               & 0.9239                          & 1.0817       \\
                \hline
                \end{tabular}
                \label{benchmark_quantities_data_1}
            \end{table}

        \subsubsection{Test case 2}
            \begin{figure}[ht]
                \centering
                \includegraphics[width=0.4\linewidth]{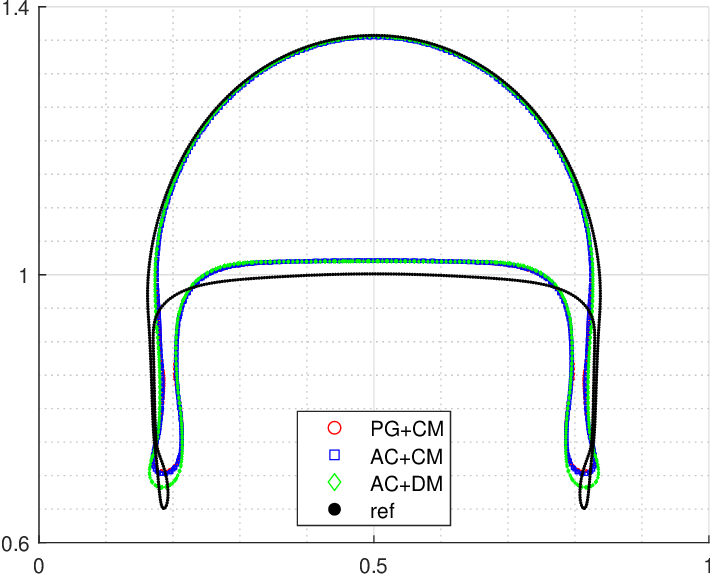}
                \caption{Comparison of the bubble shapes, computed by $h=2^{-7}$, for test case 2 at dimensional time $t^*=3$.}
                \label{bubble_shape_2}
            \end{figure}

            A low surface tension is considered in this test, resulting in high deformation of bubble shape. This shape at dimensional time $t^*=3$ is depicted in Figure \ref{bubble_shape_2}, in comparison with the reference data. All schemes exhibit a similar skirted bubble shape, and the phenomenon of break off is absent. Also, the tails of our bubbles are thicker than the reference one. In addition, Figure \ref{benchmark_quantities_2} displays the time evolution (with respect to dimensional time) of benchmark quantities, and Table \ref{benchmark_quantities_data_2} provides a quantitative comparison with the benchmark values. Again, the centroid exhibits a nice agreement with the reference data for all schemes. As the mesh size is further refined, the circularity and rise velocity of all schemes converge to the same solution, which is distinct from the reference one. It is also observed that, the rise velocity computed by AC exhibits an oscillating behavior. Although the differences in bubble shape and benchmark quantities computed by $h=2^{-7}$ with different types of mobility are negligible, degenerate mobility exhibits superior performance in interface dynamics, with a coarser mesh. Therefore, in the case of high interface deformation, it is advisable to employ degenerate mobility to more accurately capture the large deformation, especially with a low spatial resolution.
            \begin{figure}[ht]
                \centering
                \subfloat[Circularity.]{
                    \includegraphics[width=0.3\linewidth]{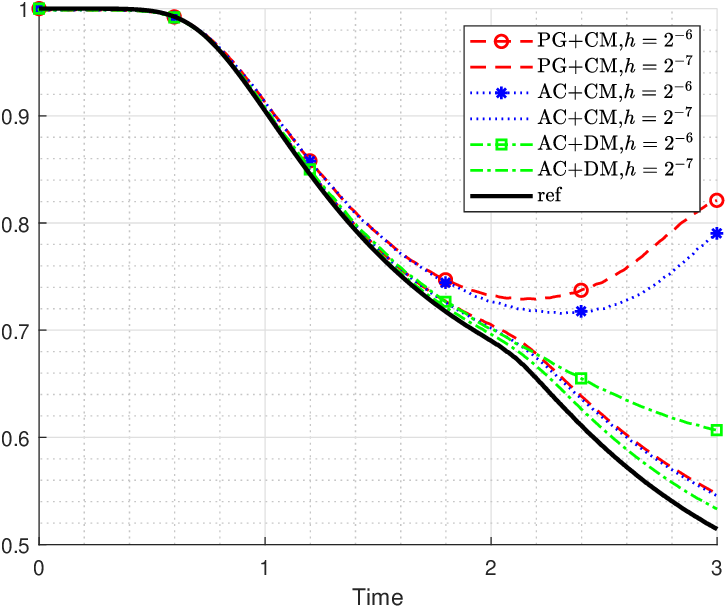}
                }
                \subfloat[Centroid.]{
                    \includegraphics[width=0.3\linewidth]{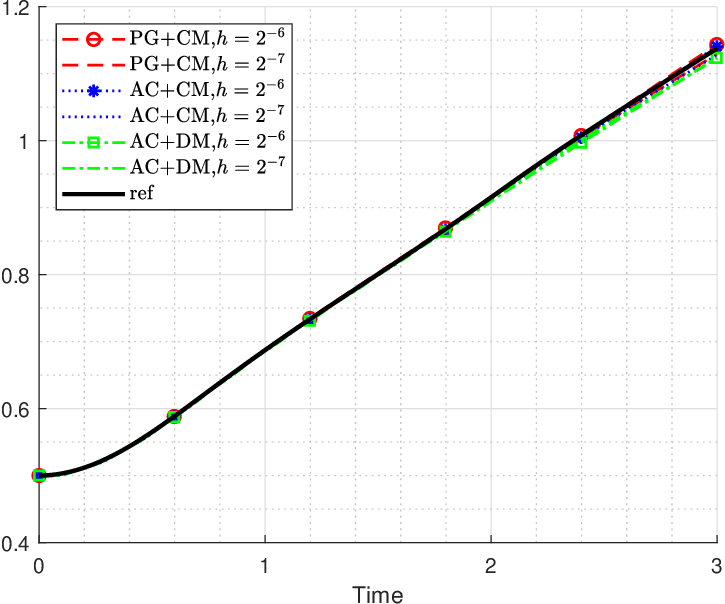}
                }
                \subfloat[Rise velocity.]{
                    \includegraphics[width=0.3\linewidth]{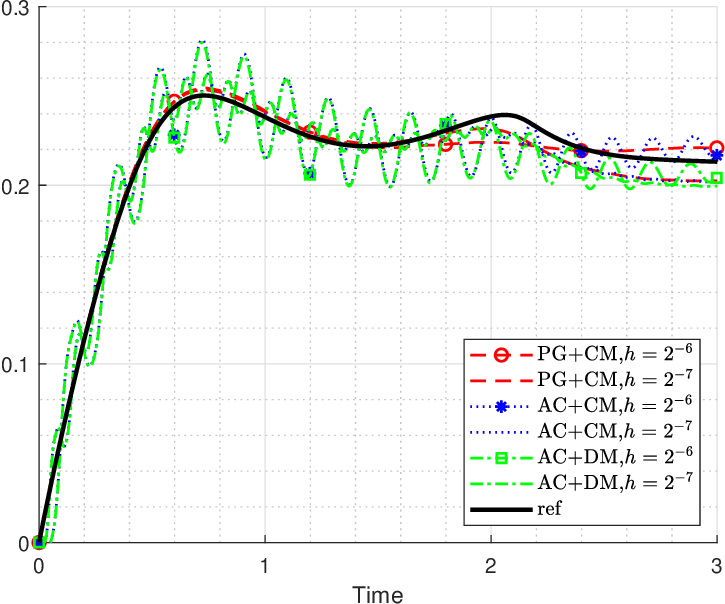}
                }
                \caption{Time evolution of benchmark quantities for test case 2.}
                \label{benchmark_quantities_2}
            \end{figure}
            \begin{table}[ht]
                \centering
                \caption{Minimum circularity and maximum rise velocity, with corresponding incidence time instants and final position of the centroid for test case 2.}
                \begin{tabular}{lllllll}
                \hline
                type                    & $h$         & \rlap{/}$c_{\mathrm{min}}$ & $t^*|_{\text{\rlap{/}}c=\text{\rlap{/}}c_{\mathrm{min}}}$ & $V_{c,\mathrm{max}}$ & $t^*|_{V_{c}=V_{c,\mathrm{max}}}$ & $y_c(t^*=3)$ \\
                \hline
                \multirow{2}{*}{PG+CM}  & $2^{-6}$    & 0.7290                     & 2.1395                                                  & 0.2545               & 0.7354                          & 1.1433       \\
                                        & $2^{-7}$    & 0.5474                     & 3.0000                                                  & 0.2533               & 0.7223                          & 1.1294       \\
                \multirow{2}{*}{AC+CM}  & $2^{-6}$    & 0.7157                     & 2.3072                                                  & 0.2815               & 0.7214                          & 1.1404       \\
                                        & $2^{-7}$    & 0.5455                     & 3.0000                                                  & 0.2623               & 0.7435                          & 1.1280       \\
                \multirow{2}{*}{AC+DM}  & $2^{-6}$    & 0.6067                     & 3.0000                                                  & 0.2798               & 0.7213                          & 1.1235       \\
                                        & $2^{-7}$    & 0.5332                     & 3.0000                                                  & 0.2617               & 0.7435                          & 1.1222       \\
                \multicolumn{1}{c}{ref} & $\diagdown$ & 0.5144                     & 3.0000                                                  & 0.2502               & 0.7317                          & 1.1376       \\
                \hline
                \end{tabular}
                \label{benchmark_quantities_data_2}
            \end{table}

        \subsubsection{Non-physical acoustic waves}
        \label{acoustic_waves}
            \begin{figure}[ht]
                \centering
                \includegraphics[width=0.4\linewidth]{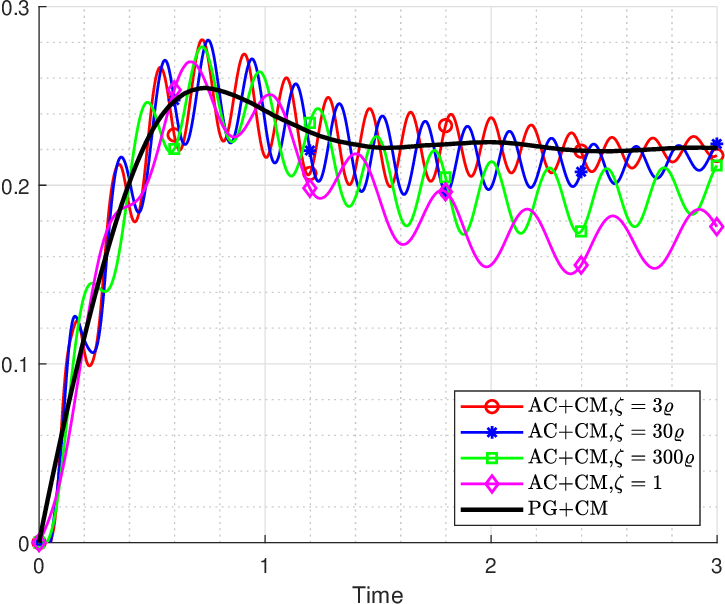}
                \caption{Time evolution of rise velocity, computed by $h=2^{-6}$, with different $\zeta$ for test case 2.}
                \label{rise_velocity_refined}
            \end{figure}
            Instead of the numerical boundary layer introduced by pressure projection method, the artificial compressibility approach may introduce non-physical acoustic waves, with wave speed $c\simeq\frac{\sqrt{\gamma_0\varrho}}{\tau}$, as observed in the time evolution of rise velocity in test case 2. The manifestation of this phenomenon is particularly conspicuous in the presence of a large density ratio; the velocity oscillation was not observed in test case 1. In fact, as mentioned in the work of DeCaria et al. \cite{2017_decaria}, increasing $\zeta$ could slow the nonphysical acoustic waves to the point that the CFL condition is satisfied. However, this treatment does not yield prominent improvement in the proposed schemes in this article, as demonstrated in Figure~\ref{rise_velocity_refined}. We will explore potential remedies for this issue in the AC formulation in the future work.

            Now we summarize several approaches in the existing literature to overcome this formidable challenge. The time filter technique was advised in \cite{2017_decaria} to control non-physical acoustics. In the work of Sitompul and Aoki \cite{2019_sitompul}, the authors observed that the oscillation amplitude could be suppressed by reducing the Mach number, which corresponds to a decrease of time step size; this phenomenon has also been observed in our results. Furthermore, the curve exhibits a gradual smoothing trend over time, indicating the suppression of non-physical acoustic waves. In the subsequent works~\cite{2021_matsushita, 2021_yang}, the authors provided additional evidence that these oscillations arise from the initial pressure imbalance. They successfully eliminated these oscillations by employing the evolving pressure projection method \cite{2021_yang}, or solving the pressure Poisson equation in the first several steps \cite{2021_matsushita} to balance the initial pressure with the gravity. Meanwhile, incorporating their approaches into the structure-preserving design of artificial compressibility scheme poses significant challenges.

        \subsection{Rayleigh-Taylor instability}
            \begin{table}[ht]
                \centering
                \caption{Physical parameters and corresponding dimensionless numbers for the Rayleigh-Taylor instability benchmark problem.}
                \begin{tabular}{lllllll}
                \hline
                $\mathrm{At}$ & $\rho_1(\rho_r)$ & $\eta_1(\eta_r)$       & $g$ & $\lambda$ & $\mathrm{Re}$ & $\mathrm{We}$             \\
                \hline
                3             & 3                &$\frac{\sqrt{2}}{1000}$ & 2   & $0.01$    & $3000$        & $400\sqrt{2}$             \\
                7             & 7                &$\frac{\sqrt{2}}{1000}$ & 2   & $0.01$    & $7000$        & $\frac{2800\sqrt{2}}{3}$  \\
                \hline
                \end{tabular}
                \label{parameters_rayleigh_taylor}
            \end{table}
        
            In this subsection, we compute the benchmark problem of Rayleigh-Taylor instability \cite{2000_guermond}, which is characterized by large interface deformation and high $\mathrm{Reynolds}$ numbers. Assuming negligible viscosity and surface tension, buoyancy predominantly dominates its evolution. The Atwood number, defined as $\mathrm{At}=\frac{\rho_1-\rho_2}{\rho_1+\rho_2},$ is commonly used in this benchmark problem to parametrize the dependence on density ratio. We choose two density ratio ($\rho_1/\rho_2$) values, 3 and 7, which correspond to $\mathrm{At}=0.5$ and 0.75, while the viscosity ratio is taken to be 1. The domain $\Omega$ is set as $(-0.5,0.5) \times (-1.5,1)$,  and the initial interface is described by the following profile:
            \begin{equation}
                \phi^0 (x,y)=\tanh\left(\frac{y+0.1\cos{2\pi{x}}}{\mathrm{Cn}}\right).
            \end{equation}
            The other physical variables are set to zero at the initial time step. The same boundary conditions and the mesh sizes, as in the single rising bubble problem, are used.  The relevant physical parameters used in \cite{2000_guermond} (a very small surface tension is set in our simulations) and corresponding dimensionless numbers are listed in Table \ref{parameters_rayleigh_taylor}. In both cases, we take $h=2^{-7},\tau=1\times{10}^{-3}$ with $\mathrm{Cn}=5\times{10}^{-3}$. To capture the vortex shape, a high spatial resolution is needed, so the Taylor-Hood $\mathbb{P}_2\times\mathbb{P}_1$ element is used for $(\bm{u}_h,P_h)$. In addition, we choose $1/\mathrm{Pe}=0.01$ for $m=(\phi^2-1)^2$. Moreover, it is worth noting that the vortex dynamics are captured in a degenerate mobility with very small surface tension $\lambda$ ($\mathcal{O}(10^{-4})$) and reference mobility $M$ ($\mathcal{O}(10^{-3})$), in many literature \cite{2020_fu, 2021_fu}. Such a parameter  requires a very high resolution, both in space and time, to get a satisfied result. Hence, moderate $\lambda$ and $M$ ($\mathcal{O}(10^{-2})$) are chosen in our numerical implementation.

            \subsubsection{A low \texorpdfstring{$\mathrm{At}$}{At} number case}
           
                The contour plots of $\phi_h$ at different dimensional time instants are depicted in Figure~\ref{profile_RTI_low}. It is observed that the solutions computed by different schemes exhibit similar structure, and the result computed with a degenerate mobility provides more details than the one computed with a constant mobility. A comparison of positions of rising and falling bubbles against reference data, taken from Fu \cite{2020_fu} (another phase-field model) and a previous work \cite{2021_li} (variable density incompressible flows), is depicted in Figure~\ref{position_1}. We observe that the results obtained from current work match the reference data very well.

                \begin{figure}[ht]
                    \centering
                    \subfloat[PG+CM.]{
                        \includegraphics[width=0.3\linewidth]{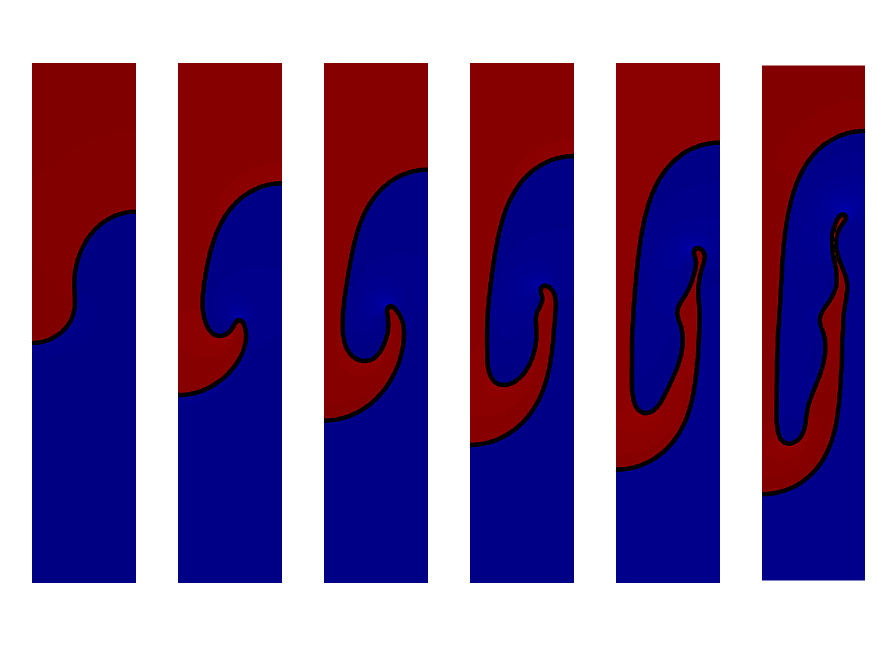}
                    }
                    \\
                    \subfloat[AC+CM.]{
                        \includegraphics[width=0.3\linewidth]{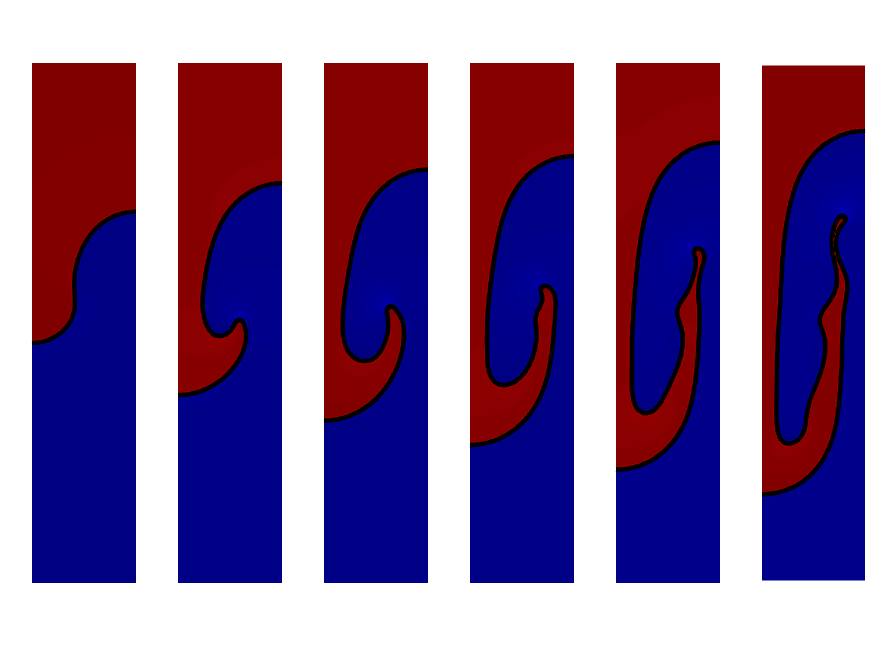}
                    }
                    \\
                    \subfloat[AC+DM.]{
                        \includegraphics[width=0.3\linewidth]{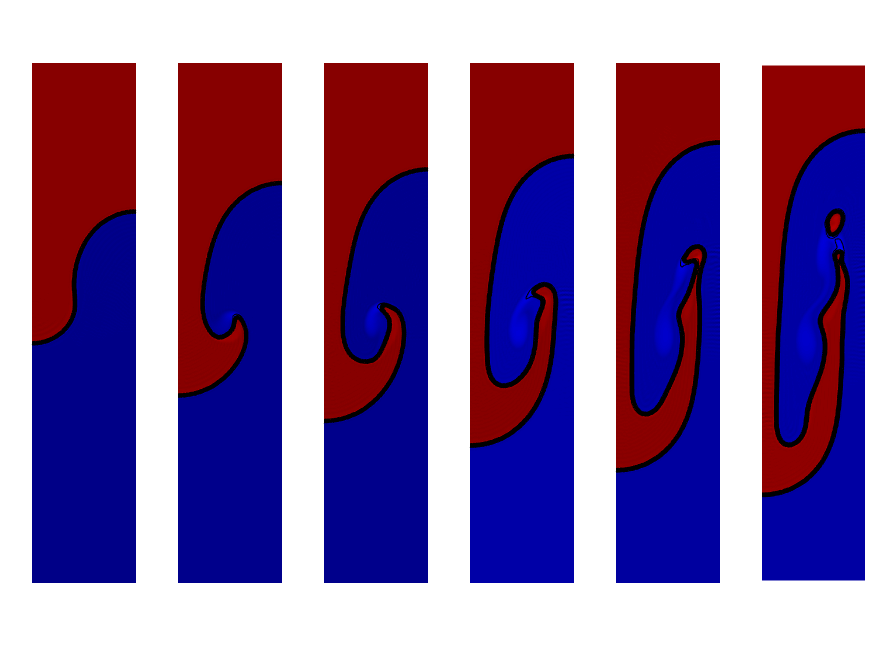}
                    }
                    \caption{Contour plot of the Rayleigh-Taylor instability problem with $\mathrm{At}=0.5$, at a sequence of dimensional time instants, $t^*=1,1.5,1.75,2,2.25,2.5$ (from left to right).}
                    \label{profile_RTI_low}
                \end{figure}

                \begin{figure}[ht]
                    \centering
                    \includegraphics[width=0.4\linewidth]{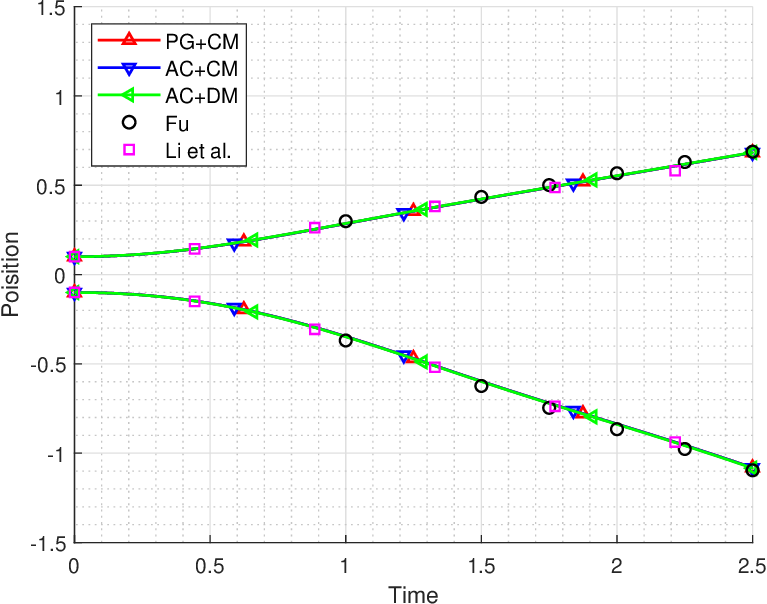}
                    \caption{Comparison of rising and falling bubble positions versus dimensional time with $\mathrm{At}=0.5$.}
                    \label{position_1}
                \end{figure}

            \subsubsection{A high \texorpdfstring{$\mathrm{At}$}{At} number case}
            
                The contour plots of $\phi_h$ at different dimensional time instants are depicted in Figure~\ref{profile_RTI_high}. Similarly, it can be observed that the solutions computed by different schemes indicate similar structures, while the result computed with a degenerate mobility exhibit more details than the one computed with a constant mobility. A comparison in terms of rising and falling bubble positions is depicted in Figure~\ref{position_2}, in which the reference data are taken from a previous work \cite{2021_li}. Again, the results obtained from the current work match the reference data very well.

                \begin{figure}[ht]
                    \centering
                    \subfloat[PG+CM.]{
                        \includegraphics[width=0.3\linewidth]{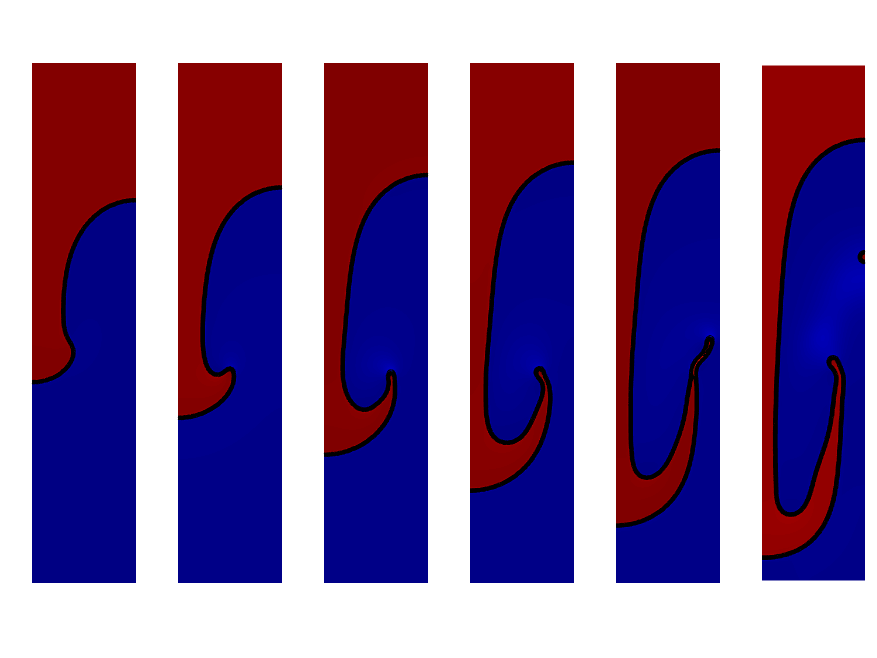}
                    }
                    \\
                    \subfloat[AC+CM.]{
                        \includegraphics[width=0.3\linewidth]{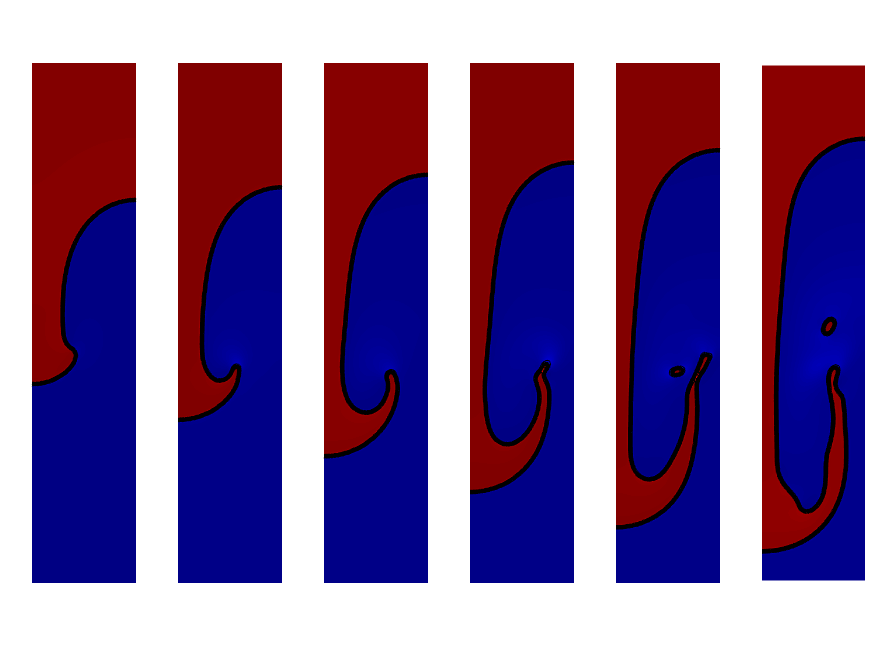}
                    }
                    \\
                    \subfloat[AC+DM.]{
                        \includegraphics[width=0.3\linewidth]{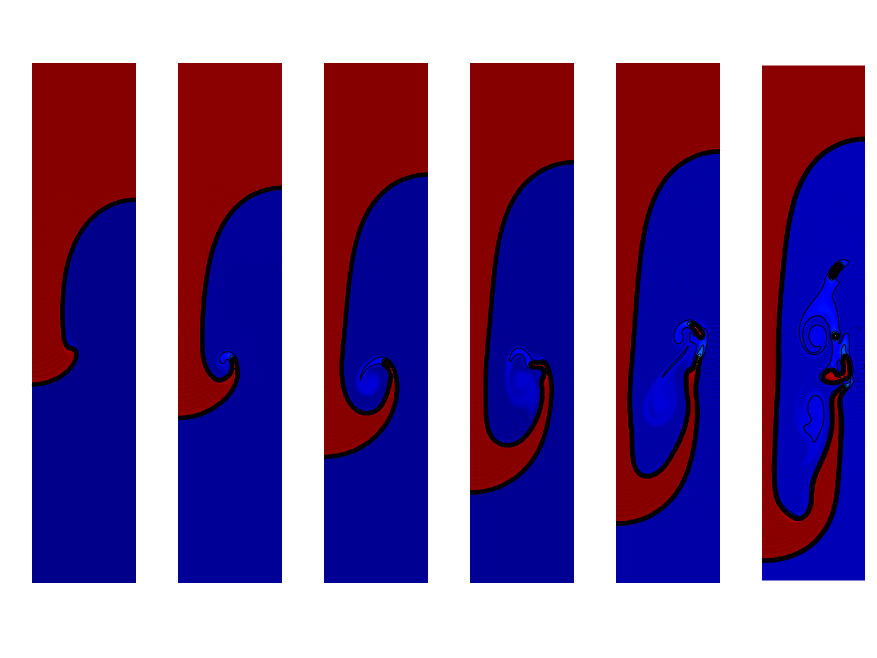}
                    }
                    \caption{Contour plot of the Rayleigh-Taylor instability problem with $\mathrm{At}=0.75$, at a sequence of dimensional time instants, $t^*=1,1.2,1.4,1.6,1.8,2$ (from left to right).}
                    \label{profile_RTI_high}
                \end{figure}
                \begin{figure}[ht]
                    \centering
                    \includegraphics[width=0.4\linewidth]{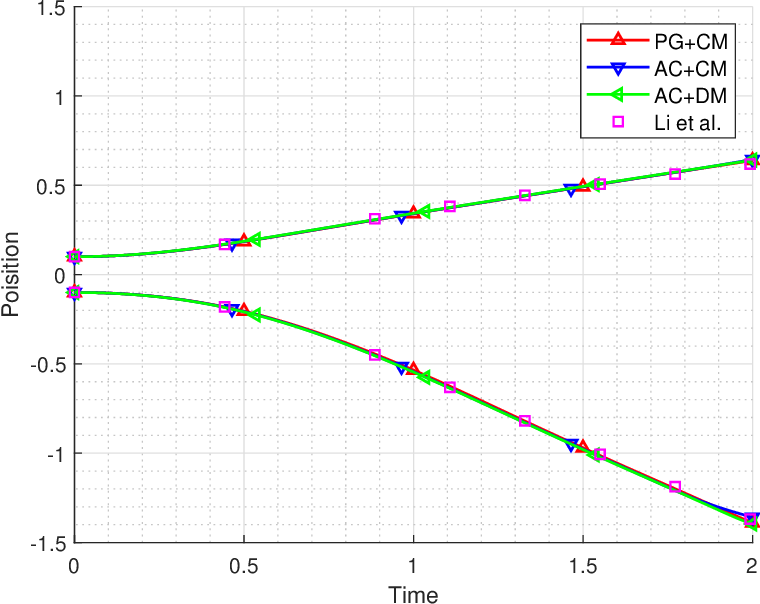}
                    \caption{Comparison of rising and falling bubble positions versus dimensional time with $\mathrm{At}=0.75$.}
                    \label{position_2}
                \end{figure}

        \subsection{Single rising bubble in three dimensions}
            In this subsection, we consider a natural 3-D analogue of the single rising bubble problem, as presented in section \ref{benckmark_bubble}. The domain is set as $\Omega=(0,1)^2 \times (0,2)$, with no-slip horizontal walls and free-slip vertical walls. The corresponding dimensionless numbers are the same as in section \ref{benckmark_bubble} (see Table \ref{parameters_bubble_rising}), and the initial interface is described by the 3-D profile
            \begin{equation}
                \phi^0 (x,y,z) =\tanh\Big(\frac{\sqrt{(x-0.5)^2+(y-0.5)^2+(z-0.5)^2}-0.3}{\mathrm{Cn}}\Big).
            \end{equation}
            To compare with the results presented in \cite{2015_barrett}, which solves the sharp interface model in the Barrett-Garcke-N\"{u}rnberg formulation using unfitted finite element approximations, the bubble radius is set to $0.3$ but not $0.25$. Again, the other physical variables are set to zero at the initial time step. We preform the numerical simulation on a very coarse mesh: $h=2^{-5}$ with $\tau=4\times{10}^{-3}$ and $\mathrm{Cn}=2\times{10}^{-2}$.

            The contour plots of $\phi_h$ at different time instants are presented in Figures~\ref{3D_bubble_test_case}. It is observed that the passage of time leads to the development of bubbles into ellipsoidal and skirted shapes, respectively. Moreover, the accuracy of the presented simulations is demonstrated by a comparison with the results in \cite{2015_barrett}, in terms of bubble shape and position, as displayed in Figures~\ref{3D_bubble_shape_1} and \ref{3D_bubble_shape_2}. And it can be observed that our simulation results are similar to the reference data, in spite of a very coarse mesh is performed. Meanwhile, since computing the benchmark quantities presented in section \ref{benckmark_bubble} on reconstructed interface are very complicated in three dimensions, they are not presented in this section. Instead, the evolution (with respect to dimensional time) of $\xi_1$ and $\xi_2$ are depicted in Figure~\ref{3D_xi}, and we can observe that they both remain close to 1 in both cases.
            \begin{figure}[ht]
                \centering
                \subfloat[PG+CM for test case 1, at a sequence of time instants, $t=0,1,1.8,2.5,\frac{21}{10}\sqrt{2}$.]{
                    \includegraphics[width=0.8\linewidth]{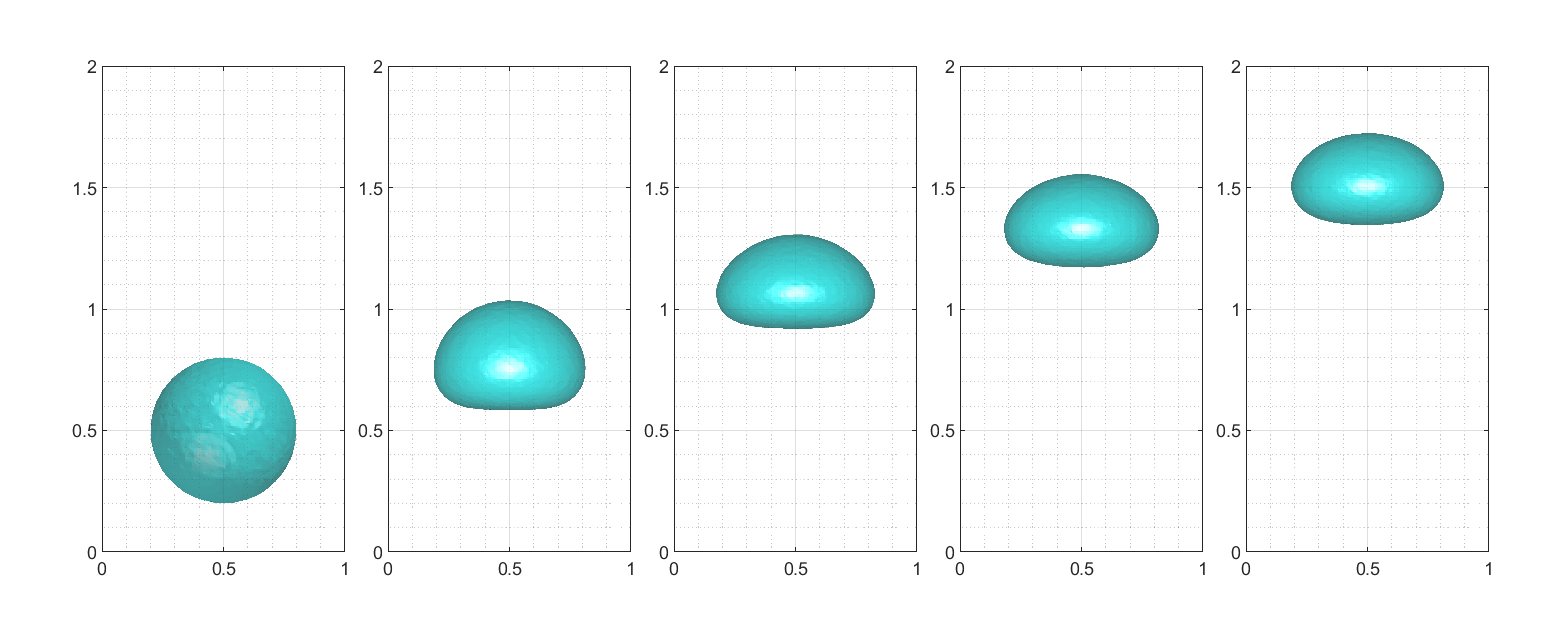}
                }
                \\
                \subfloat[AC+CM for test case 2, at a sequence of time instants, $t=0,0.5,0.8,1.1,\frac{21}{20}\sqrt{2}$.]{
                    \includegraphics[width=0.8\linewidth]{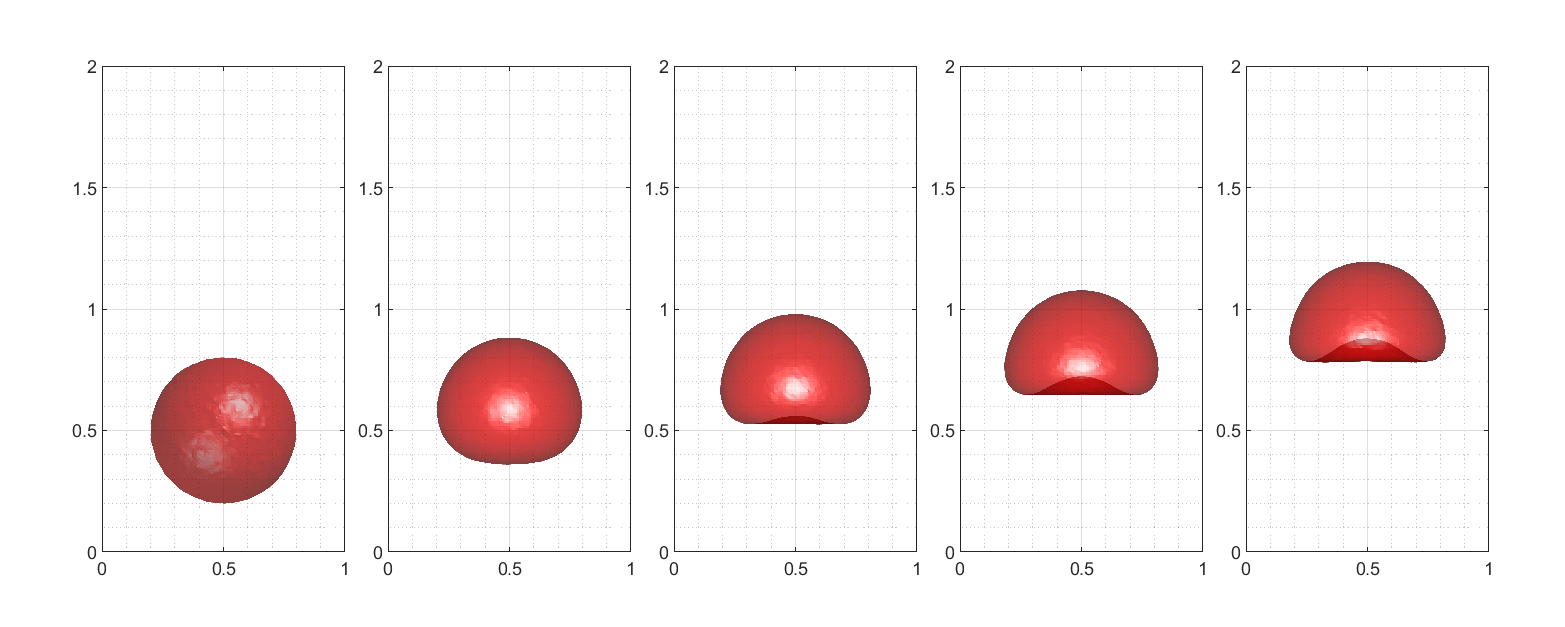}
                }
                \caption{Views of the bubble shape for the 3D single rising bubble problem from the front.}
                \label{3D_bubble_test_case}
            \end{figure}
            \begin{figure}[ht]
                \centering
                \subfloat[PG+CM for test case 1.]{
                    \includegraphics[width=0.4\linewidth]{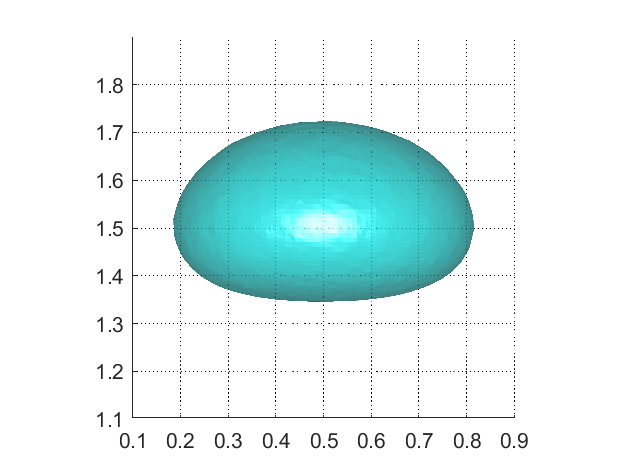}
                }
                \subfloat[Barrett et al. \cite{2015_barrett}.]{
                    \includegraphics[width=0.3\linewidth]{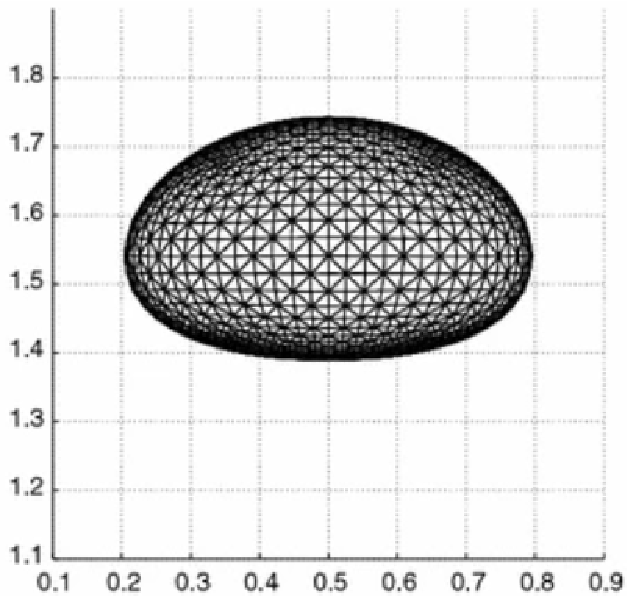}
                }
                \caption{Views of bubble shape from the front at dimensional time $t^*=3$.}
                \label{3D_bubble_shape_1}
            \end{figure}
            \begin{figure}[ht]
                \centering
                \subfloat[AC+CM for test case 2.]{
                    \includegraphics[width=0.34\linewidth]{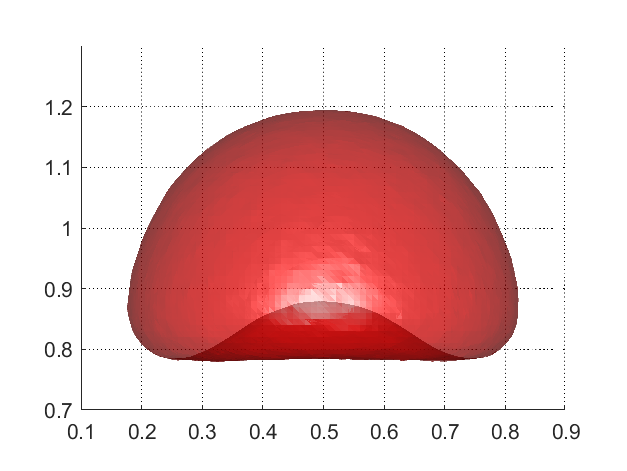}
                }
                \subfloat[Barrett et al. \cite{2015_barrett}.]{
                    \includegraphics[width=0.3\linewidth]{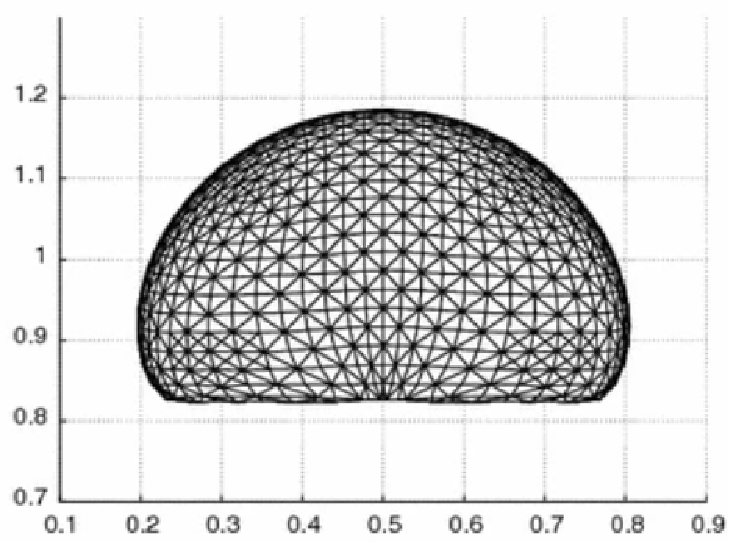}
                }
                \caption{Views of bubble shape from the front at dimensional time $t^*=1.5$.}
                \label{3D_bubble_shape_2}
            \end{figure}
            \begin{figure}[ht]
                \centering
                \subfloat[PG+CM for test case 1.]{
                    \includegraphics[width=0.3\linewidth]{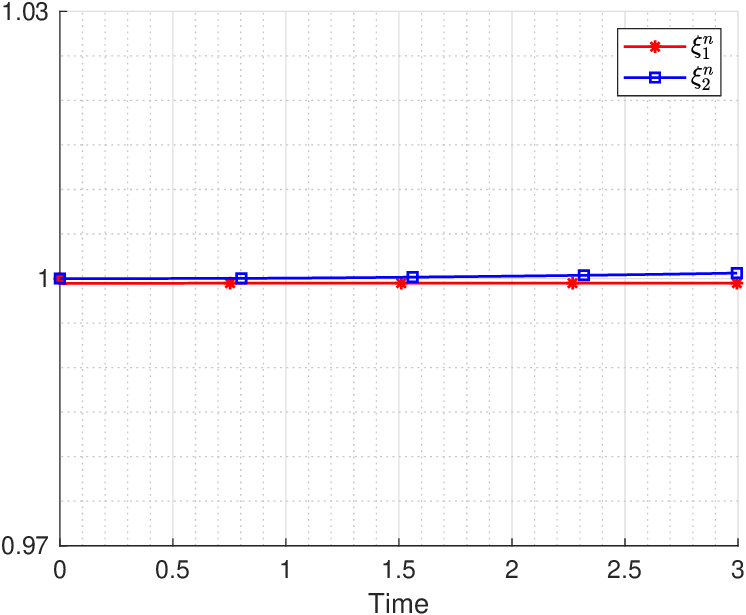}
                }
                \subfloat[AC+CM for test case 2.]{
                    \includegraphics[width=0.3\linewidth]{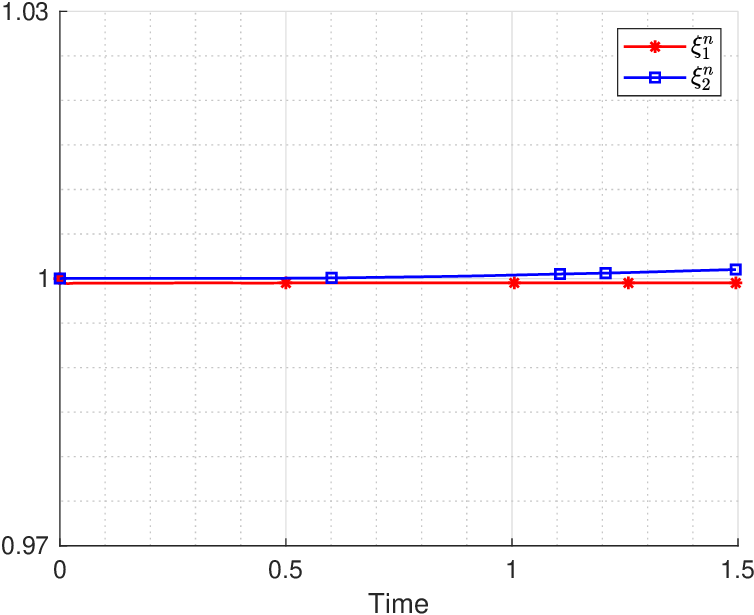}
                }
                \caption{Time evolution of the auxiliary variables.}
                \label{3D_xi}
            \end{figure}

    \section{Concluding remarks}
    \label{section_5}
        In this work, we have developed two efficient second-order BDF-type, finite element numerical schemes for the Abels-Garcke-Gr{\"u}n (AGG) model based on the MSAV approach. Both schemes are proved to be uniquely solvable and unconditionally energy stable. The accuracy and robustness of the proposed schemes are confirmed by several representative numerical simulation results.

        Irrespective of the exhibited advantages of the proposed schemes, there are still several issues that require further explorations. (i) The cut-off approach is still utilized to preserve the bound of $\phi_h^n$ when evaluating $\rho_h^n$ and $\eta_h^n$. Meanwhile, a rigorous justification of the maximum bound principle is not available for the Cahn-Hilliard equation, in contrast to that of the Allen-Cahn equation. In turn, we firmly believe that a theoretical analysis of the maximum norm of the numerical solution becomes a very important issue~\cite{2023_huang}, especially for two-phase incompressible flows with different densities. In particular, it is  anticipated that the method proposed in \cite{2022_huang} may successfully accomplish this objective in the future research. (ii) The decoupled structure attributed to the SAV approach raises the computational cost for extra load vectors and linear systems. Although substantial improvements have been achieved~\cite{2020_huang}, the associated implementation of this complicated system necessitates a further study. (iii) A thorough investigation is required to address the challenge of eliminating non-physical acoustic waves when implementing the scheme \eqref{MCHNSt_2} in the presence of a large density ratio.

        Finally, several future research objectives are presented: (i) a theoretical justification of convergence and error analysis for the proposed schemes, and (ii) exploration of efficient and structure-preserving schemes for more complex two-phase incompressible flows with different densities, such as magnetohydrodynamic and ferrohydrodynamic systems.

    \section*{Acknowledgments}
    The research of M. Li was supported in part by the NSFC 12271082, 62231016.


    \small
    \bibliographystyle{abbrv}
    \bibliography{ref}

\begin{thebibliography}{10}

\bibitem{2023_abels}
H.~Abels, H.~Garcke, and A.~Giorgini.
\newblock {Global regularity and asymptotic stabilization for the
  incompressible Navier--Stokes-Cahn--Hilliard model with unmatched densities}.
\newblock {\em Math. Ann.}, pages 1--55, 2023.

\bibitem{2012_abels}
H.~Abels, H.~Garcke, and G.~Gr{\"u}n.
\newblock {Thermodynamically consistent, frame indifferent diffuse interface
  models for incompressible two-phase flows with different densities}.
\newblock {\em Math. Models Methods Appl. Sci.}, 22(03):1150013, 2012.

\bibitem{2014_aki}
G.~L. Aki, W.~Dreyer, J.~Giesselmann, and C.~Kraus.
\newblock {A quasi-incompressible diffuse interface model with phase
  transition}.
\newblock {\em Math. Models Methods Appl. Sci.}, 24(05):827--861, 2014.

\bibitem{2012_aland}
S.~Aland and A.~Voigt.
\newblock {Benchmark computations of diffuse interface models for
  two-dimensional bubble dynamics}.
\newblock {\em Internat. J. Numer. Methods Fluids}, 69(3):747--761, 2012.

\bibitem{2015_barrett}
J.~W. Barrett, H.~Garcke, and R.~N{\"u}rnberg.
\newblock {A stable parametric finite element discretization of two-phase
  Navier--Stokes flow}.
\newblock {\em J. Sci. Comput.}, 63:78--117, 2015.

\bibitem{2023_cai}
W.~Cai, W.~Sun, J.~Wang, and Z.~Yang.
\newblock {Optimal Error Estimates of Unconditionally Stable Finite Element
  Schemes for the Cahn--Hilliard--Navier--Stokes System}.
\newblock {\em SIAM J. Numer. Anal.}, 61(3):1218--1245, 2023.

\bibitem{2012_campillo}
E.~Campillo-Funollet, G.~Gr\"un, and F.~Klingbeil.
\newblock {On modeling and simulation of electrokinetic phenomena in two-phase
  flow with general mass densities}.
\newblock {\em SIAM J. Appl. Math.}, 72(6):1899--1925, 2012.

\bibitem{2021_chen}
C.~Chen and X.~Yang.
\newblock {Fully-discrete finite element numerical scheme with decoupling
  structure and energy stability for the Cahn--Hilliard phase-field model of
  two-phase incompressible flow system with variable density and viscosity}.
\newblock {\em ESAIM Math. Model. Numer. Anal.}, 55(5):2323--2347, 2021.

\bibitem{2022_chen}
C.~Chen and X.~Yang.
\newblock {Highly efficient and unconditionally energy stable semi-discrete
  time-marching numerical scheme for the two-phase incompressible flow
  phase-field system with variable-density and viscosity}.
\newblock {\em Sci. China Math.}, 65(12):2631--2656, 2022.

\bibitem{chen19a}
W.~Chen, W.~Feng, Y.~Liu, C.~Wang, and S.~Wise.
\newblock A second order energy stable scheme for the {Cahn-Hilliard-Hele-Shaw}
  equation.
\newblock {\em Discrete Contin. Dyn. Syst. Ser. B}, 24(1):149--182, 2019.

\bibitem{chen22c}
W.~Chen, D.~Han, C.~Wang, S.~Wang, X.~Wang, and Y.~Zhang.
\newblock Error estimate of a decoupled numerical scheme for the
  {Cahn-Hilliard-Stokes-Darcy} system.
\newblock {\em IMA J. Numer. Anal.}, 42(3):2621--2655, 2022.

\bibitem{chen24c}
W.~Chen, J.~Jing, Q.~Liu, C.~Wang, and X.~Wang.
\newblock Convergence analysis of a second order numerical scheme for the
  {Flory-Huggins-Cahn-Hilliard-Navier-Stokes} system.
\newblock {\em J. Comput. Appl. Math.}, 450:115981, 2024.

\bibitem{chen24b}
W.~Chen, J.~Jing, Q.~Liu, C.~Wang, and X.~Wang.
\newblock A second order accurate, positivity-preserving numerical scheme of
  the {Cahn-Hilliard-Navier-Stokes} system with {Flory-Huggins} potential.
\newblock {\em Commun. Comput. Phys.}, 35:633--661, 2024.

\bibitem{chen22b}
W.~Chen, J.~Jing, C.~Wang, and X.~Wang.
\newblock A positivity preserving, energy stable finite difference scheme for
  the {Flory-Huggins-Cahn-Hilliard-Navier-Stokes} system.
\newblock {\em J. Sci. Comput.}, 92:31, 2022.

\bibitem{chen16}
W.~Chen, Y.~Liu, C.~Wang, and S.~Wise.
\newblock An optimal-rate convergence analysis of a fully discrete finite
  difference scheme for {Cahn-Hilliard-Hele-Shaw} equation.
\newblock {\em Math. Comp.}, 85:2231--2257, 2016.

\bibitem{2016_chen}
Y.~Chen and J.~Shen.
\newblock {Efficient, adaptive energy stable schemes for the incompressible
  Cahn--Hilliard Navier--Stokes phase-field models}.
\newblock {\em J. Comput. Phys.}, 308:40--56, 2016.

\bibitem{2018_cheng}
Q.~Cheng and J.~Shen.
\newblock {Multiple scalar auxiliary variable (MSAV) approach and its
  application to the phase-field vesicle membrane model}.
\newblock {\em SIAM J. Sci. Comput.}, 40(6):A3982--A4006, 2018.

\bibitem{ChengQ2021a}
Q.~Cheng and C.~Wang.
\newblock Error estimate of a second order accurate scalar auxiliary variable
  {(SAV)} scheme for the thin film epitaxial equation.
\newblock {\em Adv. Appl. Math. Mech.}, 13:1318--1354, 2021.

\bibitem{2022_decaria}
V.~DeCaria, S.~Gottlieb, Z.~J. Grant, and W.~J. Layton.
\newblock {A general linear method approach to the design and optimization of
  efficient, accurate, and easily implemented time-stepping methods in CFD}.
\newblock {\em J. Comput. Phys.}, 455:110927, 2022.

\bibitem{2017_decaria}
V.~DeCaria, W.~Layton, and M.~McLaughlin.
\newblock {A conservative, second order, unconditionally stable artificial
  compression method}.
\newblock {\em Comput. Methods Appl. Mech. Engrg.}, 325:733--747, 2017.

\bibitem{2021_decaria}
V.~DeCaria and M.~Schneier.
\newblock {An embedded variable step IMEX scheme for the incompressible
  Navier--Stokes equations}.
\newblock {\em Comput. Methods Appl. Mech. Engrg.}, 376:113661, 2021.

\bibitem{2017_diegel}
A.~E. Diegel, C.~Wang, X.~Wang, and S.~M. Wise.
\newblock {Convergence analysis and error estimates for a second order accurate
  finite element method for the Cahn--Hilliard--Navier--Stokes system}.
\newblock {\em Numer. Math.}, 137:495--534, 2017.

\bibitem{2018_dong}
S.~Dong.
\newblock {Multiphase flows of N immiscible incompressible fluids: a
  reduction-consistent and thermodynamically-consistent formulation and
  associated algorithm}.
\newblock {\em J. Comput. Phys.}, 361:1--49, 2018.

\bibitem{2022_duan}
B.~Duan, B.~Li, and Z.~Yang.
\newblock {An energy diminishing arbitrary Lagrangian--Eulerian finite element
  method for two-phase Navier--Stokes flow}.
\newblock {\em J. Comput. Phys.}, 461:111215, 2022.

\bibitem{2022_haddad}
M.~El~Haddad and G.~Tierra.
\newblock {A thermodynamically consistent model for two-phase incompressible
  flows with different densities. Derivation and efficient energy-stable
  numerical schemes}.
\newblock {\em Comput. Methods Appl. Mech. Engrg.}, 389:114328, 2022.

\bibitem{2021_Ern_1}
A.~Ern and J.-L. Guermond.
\newblock {\em {Finite elements II---Galerkin approximation, elliptic and mixed
  PDEs}}, volume~73.
\newblock Springer, 2021.

\bibitem{2021_Ern_2}
A.~Ern and J.-L. Guermond.
\newblock {\em {Finite Elements III---First-Order and Time-Dependent PDEs}},
  volume~74.
\newblock Springer, 2021.

\bibitem{2006_feng}
X.~Feng.
\newblock {Fully Discrete Finite Element Approximations of the
  Navier--Stokes--Cahn-Hilliard Diffuse Interface Model for Two-Phase Fluid
  Flows}.
\newblock {\em SIAM J. Numer. Anal.}, 44(3):1049--1072, 2006.

\bibitem{2023_feng}
X.~Feng, Z.~Qiao, S.~Sun, and X.~Wang.
\newblock {An energy-stable Smoothed Particle Hydrodynamics discretization of
  the Navier-Stokes-Cahn-Hilliard model for incompressible two-phase flows}.
\newblock {\em J. Comput. Phys.}, 479:111997, 2023.

\bibitem{2020_fu}
G.~Fu.
\newblock {A divergence-free HDG scheme for the Cahn-Hilliard phase-field model
  for two-phase incompressible flow}.
\newblock {\em J. Comput. Phys.}, 419:109671, 2020.

\bibitem{2021_fu}
G.~Fu and D.~Han.
\newblock {A linear second-order in time unconditionally energy stable finite
  element scheme for a Cahn--Hilliard phase-field model for two-phase
  incompressible flow of variable densities}.
\newblock {\em Comput. Methods Appl. Mech. Engrg.}, 387:114186, 2021.

\bibitem{2022_gao}
Y.~Gao, D.~Han, X.~He, and U.~R{\"u}de.
\newblock {Unconditionally stable numerical methods for
  Cahn-Hilliard-Navier-Stokes-Darcy system with different densities and
  viscosities}.
\newblock {\em J. Comput. Phys.}, 454:110968, 2022.

\bibitem{2018_gong}
Y.~Gong, J.~Zhao, X.~Yang, and Q.~Wang.
\newblock {Fully discrete second-order linear schemes for hydrodynamic phase
  field models of binary viscous fluid flows with variable densities}.
\newblock {\em SIAM J. Sci. Comput.}, 40(1):B138--B167, 2018.

\bibitem{2011_gross}
S.~Gross and A.~Reusken.
\newblock {\em {Numerical methods for two-phase incompressible flows}},
  volume~40.
\newblock Springer, 2011.

\bibitem{2013_grun}
G.~Gr\"{u}n.
\newblock {On convergent schemes for diffuse interface models for two-phase
  flow of incompressible fluids with general mass densities}.
\newblock {\em SIAM J. Numer. Anal.}, 51(6):3036--3061, 2013.

\bibitem{2014_grun}
G.~Gr\"{u}n and F.~Klingbeil.
\newblock {Two-phase flow with mass density contrast: stable schemes for a
  thermodynamic consistent and frame-indifferent diffuse-interface model}.
\newblock {\em J. Comput. Phys.}, 257:708--725, 2014.

\bibitem{2016_grun}
G.~Gr\"un and S.~Metzger.
\newblock {On micro--macro-models for two-phase flow with dilute polymeric
  solutions—modeling and analysis}.
\newblock {\em Math. Models Methods Appl. Sci.}, 26(05):823--866, 2016.

\bibitem{2006_guermond}
J.~L. Guermond, P.~Minev, and J.~Shen.
\newblock {An overview of projection methods for incompressible flows}.
\newblock {\em Comput. Methods Appl. Mech. Engrg.}, 195(44-47):6011--6045,
  2006.

\bibitem{2000_guermond}
J.-L. Guermond and L.~Quartapelle.
\newblock {A projection FEM for variable density incompressible flows}.
\newblock {\em J. Comput. Phys.}, 165(1):167--188, 2000.

\bibitem{2009_guermond}
J.-L. Guermond and A.~Salgado.
\newblock {A splitting method for incompressible flows with variable density
  based on a pressure Poisson equation}.
\newblock {\em J. Comput. Phys.}, 228(8):2834--2846, 2009.

\bibitem{2011_guermond}
J.-L. Guermond and A.~J. Salgado.
\newblock {Error analysis of a fractional time-stepping technique for
  incompressible flows with variable density}.
\newblock {\em SIAM J. Numer. Anal.}, 49(3):917--944, 2011.

\bibitem{2021_guo}
Z.~Guo, F.~Yu, P.~Lin, S.~Wise, and J.~Lowengrub.
\newblock {A diffuse domain method for two-phase flows with large density ratio
  in complex geometries}.
\newblock {\em J. Fluid Mech.}, 907:A38, 2021.

\bibitem{1996_gurtin}
M.~E. Gurtin, D.~Polignone, and J.~Vi\~{n}als.
\newblock {Two-phase binary fluids and immiscible fluids described by an order
  parameter}.
\newblock {\em Math. Models Methods Appl. Sci.}, 6(06):815--831, 1996.

\bibitem{2017_han}
D.~Han, A.~Brylev, X.~Yang, and Z.~Tan.
\newblock {Numerical analysis of second order, fully discrete energy stable
  schemes for phase field models of two-phase incompressible flows}.
\newblock {\em J. Sci. Comput.}, 70:965--989, 2017.

\bibitem{2015_han}
D.~Han and X.~Wang.
\newblock {A second order in time, uniquely solvable, unconditionally stable
  numerical scheme for Cahn--Hilliard--Navier--Stokes equation}.
\newblock {\em J. Comput. Phys.}, 290:139--156, 2015.

\bibitem{1981_hirt}
C.~Hirt and B.~Nichols.
\newblock {Volume of fluid (VOF) method for the dynamics of free boundaries}.
\newblock {\em J. Comput. Phys.}, 39(1):201--225, 1981.

\bibitem{1977_hohenberg}
P.~C. Hohenberg and B.~I. Halperin.
\newblock {Theory of dynamic critical phenomena}.
\newblock {\em Rev. Mod. Phys.}, 49(3):435--479, 1977.

\bibitem{2022_huang}
F.~Huang, J.~Shen, and K.~Wu.
\newblock {Bound/positivity preserving and unconditionally stable schemes for a
  class of fourth order nonlinear equations}.
\newblock {\em J. Comput. Phys.}, 460:111177, 2022.

\bibitem{2020_huang}
F.~Huang, J.~Shen, and Z.~Yang.
\newblock {A highly efficient and accurate new scalar auxiliary variable
  approach for gradient flows}.
\newblock {\em SIAM J. Sci. Comput.}, 42(4):A2514--A2536, 2020.

\bibitem{2023_huang}
Q.-A. Huang, W.~Jiang, J.~Z. Yang, and C.~Yuan.
\newblock {A structure-preserving, upwind-SAV scheme for the degenerate
  Cahn--Hilliard equation with applications to simulating surface diffusion}.
\newblock {\em J. Sci. Comput.}, 97(3):64, 2023.

\bibitem{2009_hysing}
S.~Hysing, S.~Turek, D.~Kuzmin, N.~Parolini, E.~Burman, S.~Ganesan, and
  L.~Tobiska.
\newblock {Quantitative benchmark computations of two-dimensional bubble
  dynamics}.
\newblock {\em Internat. J. Numer. Methods Fluids}, 60(11):1259--1288, 2009.

\bibitem{2022_jiang}
M.~Jiang, Z.~Zhang, and J.~Zhao.
\newblock {Improving the accuracy and consistency of the scalar auxiliary
  variable (SAV) method with relaxation}.
\newblock {\em J. Comput. Phys.}, 456:110954, 2022.

\bibitem{2020_khanwale}
M.~A. Khanwale, A.~D. Lofquist, H.~Sundar, J.~A. Rossmanith, and
  B.~Ganapathysubramanian.
\newblock {Simulating two-phase flows with thermodynamically consistent energy
  stable Cahn-Hilliard Navier-Stokes equations on parallel adaptive octree
  based meshes}.
\newblock {\em J. Comput. Phys.}, 419:109674, 2020.

\bibitem{2023_khanwale}
M.~A. Khanwale, K.~Saurabh, M.~Ishii, H.~Sundar, J.~A. Rossmanith, and
  B.~Ganapathysubramanian.
\newblock {A projection-based, semi-implicit time-stepping approach for the
  Cahn-Hilliard Navier-Stokes equations on adaptive octree meshes}.
\newblock {\em J. Comput. Phys.}, 475:111874, 2023.

\bibitem{2021_li}
M.~Li, Y.~Cheng, J.~Shen, and X.~Zhang.
\newblock {A bound-preserving high order scheme for variable density
  incompressible Navier-Stokes equations}.
\newblock {\em J. Comput. Phys.}, 425:109906, 2021.

\bibitem{2023_li}
N.~Li, J.~Wu, and X.~Feng.
\newblock {Filtered time-stepping method for incompressible Navier-Stokes
  equations with variable density}.
\newblock {\em J. Comput. Phys.}, 473:111764, 2023.

\bibitem{2022_li}
X.~Li and J.~Shen.
\newblock {On fully decoupled MSAV schemes for the
  Cahn--Hilliard--Navier--Stokes model of two-phase incompressible flows}.
\newblock {\em Math. Models Methods Appl. Sci.}, 32(03):457--495, 2022.

\bibitem{2003_liu}
C.~Liu and J.~Shen.
\newblock {A phase field model for the mixture of two incompressible fluids and
  its approximation by a Fourier-spectral method}.
\newblock {\em Phys. D}, 179(3-4):211--228, 2003.

\bibitem{2015_liu}
C.~Liu, J.~Shen, and X.~Yang.
\newblock {Decoupled energy stable schemes for a phase-field model of two-phase
  incompressible flows with variable density}.
\newblock {\em J. Sci. Comput.}, 62:601--622, 2015.

\bibitem{LiuY17}
Y.~Liu, W.~Chen, C.~Wang, and S.~Wise.
\newblock Error analysis of a mixed finite element method for a
  {Cahn-Hilliard-Hele-Shaw} system.
\newblock {\em Numer. Math.}, 135:679--709, 2017.

\bibitem{1998_lowengrub}
J.~Lowengrub and L.~Truskinovsky.
\newblock {Quasi-incompressible Cahn-Hilliard fluids and topological
  transitions}.
\newblock {\em R. Soc. Lond. Proc. Ser. A Math. Phys. Eng. Sci.},
  454(1978):2617--2654, 1998.

\bibitem{2013_magaletti}
F.~Magaletti, F.~Picano, M.~Chinappi, L.~Marino, and C.~M. Casciola.
\newblock {The sharp-interface limit of the Cahn--Hilliard/Navier--Stokes model
  for binary fluids}.
\newblock {\em J. Fluid Mech.}, 714:95--126, 2013.

\bibitem{2021_matsushita}
S.~Matsushita and T.~Aoki.
\newblock {Gas-liquid two-phase flows simulation based on weakly compressible
  scheme with interface-adapted AMR method}.
\newblock {\em J. Comput. Phys.}, 445:110605, 2021.

\bibitem{1981_prosperetti}
A.~Prosperetti.
\newblock {Motion of two superposed viscous fluids}.
\newblock {\em Phys. Fluids}, 24(7):1217--1223, 07 1981.

\bibitem{2007_pyo}
J.-H. Pyo and J.~Shen.
\newblock {Gauge--Uzawa methods for incompressible flows with variable
  density}.
\newblock {\em J. Comput. Phys.}, 221(1):181--197, 2007.

\bibitem{2021_rohde}
C.~Rohde and L.~von Wolff.
\newblock {A ternary Cahn--Hilliard--Navier--Stokes model for two-phase flow
  with precipitation and dissolution}.
\newblock {\em Math. Models Methods Appl. Sci.}, 31(01):1--35, 2021.

\bibitem{2018_shen_1}
J.~Shen and J.~Xu.
\newblock {Convergence and error analysis for the scalar auxiliary variable
  (SAV) schemes to gradient flows}.
\newblock {\em SIAM J. Numer. Anal.}, 56(5):2895--2912, 2018.

\bibitem{2018_shen_2}
J.~Shen, J.~Xu, and J.~Yang.
\newblock {The scalar auxiliary variable (SAV) approach for gradient flows}.
\newblock {\em J. Comput. Phys.}, 353:407--416, 2018.

\bibitem{2015_shen}
J.~Shen and X.~Yang.
\newblock {Decoupled, energy stable schemes for phase-field models of two-phase
  incompressible flows}.
\newblock {\em SIAM J. Numer. Anal.}, 53(1):279--296, 2015.

\bibitem{2018_roudbari}
M.~Shokrpour~Roudbari, G.~{\c{S}}im{\c{s}}ek, E.~H. van Brummelen, and K.~G.
  van~der Zee.
\newblock {Diffuse-interface two-phase flow models with different densities: A
  new quasi-incompressible form and a linear energy-stable method}.
\newblock {\em Math. Models Methods Appl. Sci.}, 28(04):733--770, 2018.

\bibitem{2019_sitompul}
Y.~P. Sitompul and T.~Aoki.
\newblock {A filtered cumulant lattice Boltzmann method for violent two-phase
  flows}.
\newblock {\em J. Comput. Phys.}, 390:93--120, 2019.

\bibitem{1994_sussman}
M.~Sussman, P.~Smereka, and S.~Osher.
\newblock {A level set approach for computing solutions to incompressible
  two-phase flow}.
\newblock {\em J. Comput. Phys.}, 114(1):146--159, 1994.

\bibitem{2023_ten}
M.~F.~P. ten Eikelder, K.~G. van~der Zee, I.~Akkerman, and D.~Schillinger.
\newblock {A unified framework for Navier--Stokes Cahn--Hilliard models with
  non-matching densities}.
\newblock {\em Math. Models Methods Appl. Sci.}, 33(01):175--221, 2023.

\bibitem{WangM2021}
M.~Wang, Q.~Huang, and C.~Wang.
\newblock A second order accurate scalar auxiliary variable {(SAV)} numerical
  method for the square phase field crystal equation.
\newblock {\em J. Sci. Comput.}, 88(2):33, 2021.

\bibitem{2019_wang}
Z.~Wang, S.~Dong, M.~S. Triantafyllou, Y.~Constantinides, and G.~E.
  Karniadakis.
\newblock {A stabilized phase-field method for two-phase flow at high Reynolds
  number and large density/viscosity ratio}.
\newblock {\em J. Comput. Phys.}, 397:108832, 2019.

\bibitem{2017_wu}
J.~Wu, J.~Shen, and X.~Feng.
\newblock {Unconditionally stable Gauge--Uzawa finite element schemes for
  incompressible natural convection problems with variable density}.
\newblock {\em J. Comput. Phys.}, 348:776--789, 2017.

\bibitem{2021_yang}
K.~Yang and T.~Aoki.
\newblock {Weakly compressible Navier-Stokes solver based on evolving pressure
  projection method for two-phase flow simulations}.
\newblock {\em J. Comput. Phys.}, 431:110113, 2021.

\bibitem{2021_yang_1}
X.~Yang.
\newblock {A novel fully-decoupled, second-order and energy stable numerical
  scheme of the conserved Allen--Cahn type flow-coupled binary surfactant
  model}.
\newblock {\em Comput. Methods Appl. Mech. Engrg.}, 373:113502, 2021.

\bibitem{2020_yang}
X.~Yang and G.-D. Zhang.
\newblock {Convergence analysis for the invariant energy quadratization (IEQ)
  schemes for solving the Cahn--Hilliard and Allen--Cahn equations with general
  nonlinear potential}.
\newblock {\em J. Sci. Comput.}, 82:1--28, 2020.

\bibitem{2019_yang}
Z.~Yang and S.~Dong.
\newblock {An unconditionally energy-stable scheme based on an implicit
  auxiliary energy variable for incompressible two-phase flows with different
  densities involving only precomputable coefficient matrices}.
\newblock {\em J. Comput. Phys.}, 393:229--257, 2019.

\bibitem{2014_zhang}
J.~Zhang and M.-J. Ni.
\newblock {Direct simulation of multi-phase MHD flows on an unstructured
  Cartesian adaptive system}.
\newblock {\em J. Comput. Phys.}, 270:345--365, 2014.

\bibitem{2022_zhang}
Y.~Zhang and J.~Shen.
\newblock {A generalized SAV approach with relaxation for dissipative systems}.
\newblock {\em J. Comput. Phys.}, 464:111311, 2022.

\end{thebibliography}
\end{document}